\numberwithin{equation}{section}
\newtheorem{theorem}{Theorem}[section]
\newtheorem{lemma}{Lemma}[section]
\newtheorem{proposition}{Proposition}[section]
\newtheorem{corollary}[theorem]{Corollary}
\theoremstyle{definition}
\newtheorem{definition}{Definition}[section]
\newtheorem{example}[theorem]{Example}
\theoremstyle{remark}
\newtheorem{remark}{Remark}[section]
\title[Fundamental solutions and decay rates] 
      {Fundamental solutions and decay of fully non-local problems}
\author[Juan C. Pozo and Vicente Vergara]{}
\subjclass{Primary: 35B40, 35R11, 35E05; Secondary: 42A45.}
 \keywords{Non-local partial differential equations, large-time behavior of solutions, fundamental solutions and subordination methods.}
 \email{juan.pozo@ufrontera.cl}
 \email{vicente.vergara@udec.cl}
\thanks{$^*$The first author is partially supported by Fondecyt grant 11160295}
\thanks{$^\dagger$The second author is partially supported by Fondecyt grant 1150230}
\thanks{$^\ddagger$ Corresponding author: Juan C. Pozo}
\begin{document}
\maketitle

\centerline{\scshape Juan C. Pozo$^{*,\ddagger}$}
\medskip
{\footnotesize
 \centerline{Departamento de Matem\'aticas y Estad\'isticas, Facultad de Ingenier\'ia y Ciencias.}
   \centerline{Universidad de La Frontera, Temuco, Chile.}
} 

\medskip

\centerline{\scshape Vicente Vergara$^{\dagger}$}
\medskip
{\footnotesize
 \centerline{Departamento de Matem\'aticas, Facultad de Ciencias F\'isicas y Matem\'aticas}
\centerline{Universidad de Concepci\'on,  Concepci\'on, Chile.}
}

\bigskip

 \centerline{(Communicated by the associate editor name)}

\begin{abstract}
In this paper, we study a fully non-local reaction-diffusion equation which is non-local both in time and space. We apply subordination principles to construct the fundamental solutions of this problem, which we use to find a representation of the mild solutions. Moreover, using techniques of Harmonic Analysis and Fourier Multipliers, we obtain the temporal decay rates for the mild solutions.
\end{abstract}

\section{Introduction}

This paper is devoted to study temporal decay estimates of solutions of a non-local in time and space reaction-diffusion problem. More precisely, we consider the equation
\begin{equation}\label{Eq:Intro1}
\partial_t(k*[u(t,x)-u_0(x)])+(-\Delta)^{\rho/2} u(t,x)=f(t,x),\quad t > 0,\quad x\in \mathbb{R}^d,
\end{equation}
\noindent with initial condition,
\begin{equation}\label{Eq:Intro2}
u(0,x)=u_0(x),\quad x\in\mathbb{R}^d,
\end{equation}
\noindent where $u_0$ and $f$ are given functions and  $(k*v)$ denotes the convolution product  on the positive halfline $\mathbb{R}_+:=[0,\infty)$ with respect to time variable, this is $(k*v)(t)=\int_0^t k(t-s)v(s)ds,$ with $t\ge 0$. The operator $(-\Delta)^{\rho/2}$ with $\rho>0$ is  known in the literature as fractional Laplacian and $k$ is a kernel of type $(\mathcal{PC})$, by which we mean that the following condition is satisfied.

\medbreak

\begin{enumerate}
\item [$(\mathcal{PC})$] $k \in L_{1,loc}(\mathbb{R}_+)$ is nonnegative and nonincreasing, and there exists a kernel $\ell\in L_{1,loc}(\mathbb{R}_+)$ such that $(k*\ell) = 1$ in $(0, \infty)$. In this case we also write $(k,\ell)\in(\mathcal{PC})$.
\end{enumerate}

\medbreak
 
We point out that the kernels of type $(\mathcal{PC})$ are divisors of the unit with respect to the temporal convolution. These kernels are also called {\it Sonine kernels} and they have been successfully used to study integral equations of first kind in the spaces of H\"older continuous, Lebesgue and Sobolev functions, see \cite{Car-Fio-Ten-2017}. 

Further, the condition $(\mathcal{PC})$ covers several interesting integro-differential operators with respect to time that appear in the context of subdiffusion processes.  

For instance, a very important example of $(k,\ell)\in(\mathcal{PC})$ is given by the pair $(g_{1-\alpha},g_\alpha)$ with $\alpha\in(0,1)$, where $g_\beta$ is the standard notation for the function
\[
g_\beta(t)=\dfrac{t^{\beta-1}}{\Gamma(\beta)},\quad t>0,\quad \beta>0.
\]

In this case the term $\partial_t(k*v)$ becomes the Riemman-Liouville fractional derivative $\partial_t^\alpha v$ of order $\alpha\in(0,1)$. The Riemman-Liouville fractional derivative is closely related with a class of Montroll-Weiss continuous time random walk models and it has become one of the standard physics approaches to model anomalous diffusion processes. The details of the derivation of these equations from physics principles and for further applications of such models can be found in \cite{Met-Kla-2000}. Furthermore, if $\rho=2$ in \eqref{Eq:Intro1}-\eqref{Eq:Intro2} the corresponding {\it mean square displacement}, (which is an important quantity that measures the dispersion of random processes and that describes how fast particles diffuse), behaves like $t^\alpha$ for large times, see \cite[Lemma 2.1]{Kem-Sil-Ver-Zach-2016}. For this reason, sometimes, this problem is referred as {\it subdiffusion equation}. 

With $\rho\in(0,2]$ and this kernel, \eqref{Eq:Intro1}-\eqref{Eq:Intro2} is called  {\it fully nonlocal diffusion equation} and it has been recently studied in \cite{Kem-Sil-Zach-2017,Kim-Lim-2016}. In both papers, the authors need several technical results about the so-called  Mittag-Leffler  and Fox $H$-functions to obtain the asymptotic behavior of the mild solutions. However, this approach seems not to be very helpful (or easy) to derive the large-time behavior of solutions to equations with other nonlocal in time operators. In this situation, to obtain the analogues functions to the Mittag-Leffler and Fox $H$-functions could be a very hard task.


Another interesting and important example of kernels $(k,\ell)\in(\mathcal{PC})$ is given by 
\begin{equation}\label{k:dist:order}
k(t)=\int_0^1 g_\alpha(t)\nu(\alpha)d\alpha , \quad t>0,
\end{equation}
\noindent where $\nu$ is a continuous non-negative function that not vanishes in a set of posi\-tive measure. Under appropriate conditions on $\nu$, the existence of a function $\ell \in L^1_{loc}(\mathbb{R}^+)$ such that $k*\ell=1$, has been established in \cite[Proposition 3.1]{Koch-2008}. In this case the operator $\partial_t (k*\cdot)$ is a so-called {\it operator of distributed order}, and \eqref{Eq:Intro1}-\eqref{Eq:Intro2} is an example of a so-called {\it ultraslow diffusion equation} if $\nu(0)=0$ and $\rho=2$ (see \cite{Koch-2008}). The special cases $\nu(\alpha)=\alpha^n$  with $n\in\mathbb{N}$, are discussed in Example \ref{Ex:4} below. Ultraslow diffusion equations have been successfully used  in physical literature for modeling diffusion with a logarithmic growth of the mean square displacement, see \cite[Theorem 4.3]{Koch-2008}. 

The fractional powers of the Laplacian operator arise naturally in different contexts. To mention a few of them, combustion theory \cite{Caff-Roq-Sir-2010}, dislocation processes of mechanical systems \cite{Imb-Mon-2008,Imb-Mon-Rou-2008}, among others. Further, it is a well-known fact from Probability theory that fractional Laplacian is the standard example of a non-local operator that generates a markovian $C_0$-semigroup, see \cite{Sat-1999}.

Another context where equations of the form \eqref{Eq:Intro1}-\eqref{Eq:Intro2} and nonlinear variants of them appear is the modeling of dynamic processes in materials with memory. Examples are given by the theory of heat conduction with memory, see, e.g., \cite{Pru-1993} and the references therein as well as \cite{Ver-Zach-2015}, and the diffusion of fluids in porous media with memory, cf. \cite{Ign-Ros-2009}. Given the condition $(\mathcal{PC})$, the problem \eqref{Eq:Intro1}-\eqref{Eq:Intro2} can be reformulated as an abstract Volterra equation on the positive half-line with a completely positive kernel; this can be seen by convolving the partial differential equation with the kernel $\ell$. There exists a substantial amount of work on such abstract Volterra and integro-differential equations since the 1970s, in particular on existence and uniqueness, regularity, and long-time behavior of solutions, see, for instance, \citep{Cle-Noh-1981, Gri-Lon-Sta-1990,Zach-2005,Zach-2008}, and the monograph \cite{Pru-1993}.

\medbreak

One of the main objectives of this paper is to prove sharp estimates for the temporal decay of solutions of the problem \eqref{Eq:Intro1}-\eqref{Eq:Intro2}. We point out that for non-local in space diffusion equations, in particular space-fractional diffusion equations, corresponding results have been obtained recently, see e.g. \cite{Vaz-2018,Bar-Per-Sor-Val-2014}. Concerning to non-local in time diffusion problems, the homogeneous case and kernels of type $(\mathcal{PC})$ have been studied recently in \cite{Ver-Zach-2015,Kem-Sil-Ver-Zach-2016}, where the authors  have obtained optimal rates of decay. 

\medbreak

We briefly explain how we obtain our main results. We first have by means of the subordination principle in the sense of Bochner and in the sense of Pr\"uss, see \cite{Boch-1949,Boch-1955, Pru-1993} a fundamental solution $Z(t,x)$ of \eqref{Eq:Intro1}-\eqref{Eq:Intro2}  with $f=0$ and $u_0=\delta_0$. Next, by the variations of parameters formula for Volterra equations, we define the mild solution $u$ of \eqref{Eq:Intro1}-\eqref{Eq:Intro2} as
\begin{equation}\label{Mild:Sol:0}
u(t,\cdot)= Z(t,\cdot)\star u_0(\cdot) + \int_0^t Y(t-s,\cdot)\star f(s,\cdot)ds,
\end{equation}
where the symbol $\star$ stands for the convolution of two functions in $L_p(\mathbb{R}^d)$ and the kernel $Y$ solves the Volterra equation of the first kind $\bigl(Y(\cdot,x)\ast k\bigr)(t) = Z(t,x)$. To derive decay estimates for $Z(t,\cdot)\star u_0(\cdot)$ and $\int_0^t Y(t-s,\cdot)\star f(s,\cdot)ds$, we adapt a method proposed recently in \cite[Section 5]{Kem-Sil-Ver-Zach-2016}, for the case $\rho =2$ and $f=0$, which is based on tools from the harmonic analysis and a careful estimation of the Fourier symbol $\tilde{Z}(t,\xi)$ of $Z$ with respect to the spatial variable. In this paper, we develop this method for \eqref{Eq:Intro1}-\eqref{Eq:Intro2}. At this point, two relaxation functions $s_{\mu}$ and $r_{\mu}$ arise from the well-known theory of Volterra equations see e.g. \cite[Chapter 2]{Gri-Lon-Sta-1990} and \cite{Cle-Noh-1979},  and they are the key to obtain the long-time behavior of solution $u$ of \eqref{Eq:Intro1}-\eqref{Eq:Intro2} given by \eqref{Mild:Sol:0}. 


\medskip

The paper is organized as follows. In Section \ref{relaxation} we collect some properties of the relaxation functions $s_{\mu}$ and $r_{\mu}$, which play an important role to derive the long-time behavior of solution to \eqref{Eq:Intro1}. Section \ref{Section:Main} is devoted to obtain the fundamental solution $Z$ of \eqref{Eq:Intro1}-\eqref{Eq:Intro2}. To this end, we use the subordination principle of operator families in the sense of Bochner and in the sense of Pr\"uss to derive a variation of parameters formula for \eqref{Eq:Intro1}-\eqref{Eq:Intro2}. We also point out that how our approach can be extended to others pseudo-differential operators. In Section \ref{SecL2} we obtain optimal decay in the $L_2$-norm, we illustrate our result with several examples. The decay estimate for $L_r$ is obtained in Section \ref{S:Lr}, here we use tools from the harmonic analysis and we make a careful estimation of the Fourier symbol $\widetilde{Z}(t,\xi)$ of $Z$ with respect to the spatial variable. We derive Fourier multipliers by means of the Mihlin's condition to obtain our main results, which are splitted as follows: for $f=0$ the decay estimate of $u$ is established in Theorem \ref{Theo:Lr:Est:u0} and for the gradient of $u$ in Theorem \ref{Theo:Grad:Sol}. In case of $u_0=0$ the corresponding results are Theorem \ref{Theo:Lr:Est:f} and Theorem \ref{Theo:Grad:Sol} respectively. Finally, in Section \ref {S:Ex} we illustrate in Corollary \ref{Theo:Ex:decay}, Corollary \ref{Theo:Ex:decay:2}  and Corollary \ref{Theo:Ex:decay:3} the different kinds of decay, which are e.g., exponential, algebraic and logarithmic decay.

\medbreak

\section{Relaxation functions $s_{\mu}$ and $r_{\mu}$}\label{relaxation}
We first collect some properties of kernels of type $(\mathcal{PC})$. Let $(k,\ell)\in (\mathcal{PC})$.
For $\mu\in \mathbb{R}$ define the kernels $s_\mu, r_\mu \in L_{1,loc}(\mathbb{R}_+)$ via the scalar Volterra equations
\begin{equation}\label{Func:s}
s_\mu(t)+\mu(\ell\ast s_\mu)(t)  = 1,\quad t>0,
\end{equation}
\noindent and 
\begin{equation}\label{Func:r}
r_\mu(t)+\mu(\ell\ast r_\mu)(t)  = \ell(t),\quad t>0.
\end{equation}
Both $s_\mu$ and $r_\mu$ are nonnegative for all $\mu\in \mathbb{R}$. For $\mu\ge 0$, this is a consequence of the complete positivity of $\ell$ (see \cite[Theorem 2.2]{Cle-Noh-1981} or \cite[Proposition 4.5]{Pru-1993}). If $\mu<0$, this can be seen, e.g.\ by a simple fixed point argument in the space of nonnegative $L_1((0,T))$-functions with arbitrary $T>0$ and an appropriate norm. Moreover, $s_\mu\in H^1_{1,\,loc}([0,\infty))$ for all $\mu\in \mathbb{R}$, and if $\mu\ge 0$, then the function $s_\mu$ is nonincreasing.

Convolving \eqref{Func:r} with $k$, and using that $(k,\ell)\in(\mathcal{PC})$, it follows that $s_\mu=k\ast r_\mu$, by uniqueness. Further, we see that
\begin{equation}\label{1*r=1-s}
\mu(1\ast r_\mu)(t)=1-(k\ast r_\mu)(t)=1-s_\mu(t),\quad t>0,
\end{equation}
which shows that for $\mu>0$ the function $r_\mu$ is integrable on $\mathbb{R}_+$. 

The Laplace transform of a function $f$ is denoted by $\widehat{f}(\lambda)$. We point out that the condition $(k,\ell)\in(\mathcal{PC})$ implies that the Laplace transform of the functions $s_\mu$ and $r_\mu$, are well defined and they are given by
\begin{equation}\label{Lap:s:r}
\widehat{s_\mu}(\lambda)=\dfrac{1}{\lambda(1+\mu\widehat{\ell}(\lambda))},\,  \text{Re} \lambda >0 \text{\ \ and\ \ } \widehat{r_\mu}(\lambda)=\dfrac{\widehat{\ell}(\lambda)}{1+\mu\widehat{\ell}(\lambda)},\, \text{Re} \lambda >0.
\end{equation}

Further, in \cite[Lemma 6.1]{Ver-Zach-2015} the authors have proved that for any $\mu\ge 0$ there holds
\begin{equation}\label{Est:s}
\dfrac{1}{1+\mu k(t)^{-1}}\le s_\mu(t)\le \dfrac{1}{1+\mu(1*\ell)(t)},\text{\ for all\ \ } t\geq 0.
\end{equation}
 
It is well known (see \cite{Cle-Noh-1979}) that if $\ell\in L_{1,loc}(\mathbb{R}_+)$, positive, nonincreasing, then for all $\mu\ge 0$, there exists $r_\mu$ solving the equation \eqref{Func:r}. Moreover we have that $0\le r_\mu(t)\le \ell(t)$ for all $t>0$ and 
\[
\int_0^\infty r_\mu(t)dt=\widehat{r}_\mu(0^+)=\dfrac{\widehat{\ell}(0^+)}{1+\mu\widehat{\ell}(0^+)}\le \dfrac{1}{\mu}.
\]

Furthermore, it follows from \cite[Thereom 3.1]{Gri-Lon-Sta-1990} that $|r_\mu|_1<\frac{1}{\mu}$ if and only if $\ell\in L_{1}(\mathbb{R}_+)$. For more properties about the function $r_\mu$ see \cite[Chapter 5 and Chapter 6]{Gri-Lon-Sta-1990} or \cite[Section 2]{Cle-Noh-1979}.


\section{Variation of parameters formula and subordination principle}\label{Section:Main}

In this section we use the subordination principle of operator families in the sense of Bochner (see e.g. \cite[Chapter 4.3 and Chapter 4.4]{Boch-1955}  and \cite[Section 4.3]{Jac-1996}) and in the sense of Pr\"uss (see \cite{Cle-Noh-1981} and \cite[Chapter 4]{Pru-1993}) to derive a variation of parameters formula for 
\begin{align}
\partial_t(k*[u(t,x)-u_0(x)])+(-\Delta)^{\rho/2} u(t,x)&=f(t,x),\quad t > 0, x\in \mathbb{R}^d,\label{Equation:1}\\
u(0,x)&=u_0(x),\quad x\in\mathbb{R}^d.\label{Equation:2}
\end{align}
With this end, we introduce the Fourier transform of $v\in \mathcal{S}(\mathbb{R}^d)$ by
\[
\widetilde{v}(\xi)=\mathcal{F}(v)(\xi)=(2\pi)^{-d/2}\int_{\mathbb{R}^d}e^{-ix\cdot \xi}v(x)dx, 
\]

\noindent extended as usual to $\mathcal{S}'(\mathbb{R}^d)$. Further, the inverse Fourier transform of $v$, is described by
\[
\mathcal{F}^{-1}(u)(\xi)=\mathcal{F}(u)(-\xi).
\]

We use the definition of the fractional Laplacian operator via Fourier transform, that is,
\begin{equation}\label{Frac:Lap:1}
(-\Delta)^{\rho/2} u(x)=\mathcal{F}^{-1}(|\xi|^\rho \mathcal{F}(u)(\xi))(x).
\end{equation}

We point out that this definition does not impose any restriction on the values $\rho>0$.


\bigskip

\subsection*{Subordination in the sense of Bochner} The following results Theorem \ref{Theo:Conv:Semi} and Theorem \ref{Lem:Sub:Boch} (below) are collected from \cite[Chapter 3 and Chapter 4]{Jac-1996} respectively, and they are well-known from the probability theory and the semigroup theory. 


\begin{definition} A family $(\eta_t)_{t\ge 0}$ of bounded Borel measures on $\mathbb{R}^d$ is called a convolution semigroup on $\mathbb{R}^d$ if the following conditions are fulfilled:
\begin{enumerate}[$(i)$]
\item $\eta_t(\mathbb{R}^d)\le 1$.
\item $\eta_{t+s}=\eta_{t}\star\eta_s$ for all $s,t\ge 0$, and $\eta_0=\delta_0$.
\item $\eta_t\to\delta_0$ vaguely as $t\to 0$, that is 
$$\lim_{t\to 0}\int_{\mathbb{R}^d}\phi(x)\eta_t(dx)=\int_{\mathbb{R}^d}\phi(x)\delta_0(dx),\quad\text{\ for all \ } \phi\in C_0(\mathbb{R}^d).$$
\end{enumerate}
\end{definition}


\begin{definition}
A function $\mu\colon \mathbb{R}^d\to \mathbb{C}$ is called negative definite if $\mu(0)\ge 0$ and the mapping $\xi\mapsto (2\pi)^{-d/2}e^{-t\mu(\xi)}$ is positive definite for all $t\ge 0$.
\end{definition}


\begin{theorem}\label{Theo:Conv:Semi} 
For any convolution semigroup $(\eta_t)_{t\ge 0}$ there exists a uniquely determined continuous negative definite function $\mu\colon \mathbb{R}^d\to \mathbb{C}$ such that 
\begin{equation}\label{Conv:Semi}
\widetilde{\eta_t}(\xi)=(2\pi)^{-d/2}e^{-t\mu(\xi)},\quad t\ge 0, \quad \xi\in\mathbb{R}^d.\end{equation}
Reciprocally, for any continuous negative definite function $\mu\colon \mathbb{R}^d\to \mathbb{C}$, there exists a unique convolution semigroup $(\eta_t)_{t\ge 0}$ satisfying the identity \eqref{Conv:Semi}.
\end{theorem}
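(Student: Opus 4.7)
The plan is to pass everything through the Fourier transform and then invoke Bochner's theorem to move back and forth between bounded nonnegative measures and their continuous positive definite transforms; the semigroup law translates on the Fourier side into a multiplicative one-parameter functional equation, whose solutions are exponentials.

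For the forward implication, starting from a convolution semigroup $(\eta_t)_{t\ge 0}$, I set $f_t(\xi):=(2\pi)^{d/2}\widetilde{\eta_t}(\xi)=\int_{\mathbb{R}^d}e^{-ix\cdot\xi}\eta_t(dx)$. The three defining properties of a convolution semigroup translate into $|f_t(\xi)|\le \eta_t(\mathbb{R}^d)\le 1$; the functional equation $f_{t+s}(\xi)=f_t(\xi)f_s(\xi)$ with $f_0\equiv 1$, which follows from the convolution theorem for finite measures; and the pointwise convergence $f_t(\xi)\to 1$ as $t\to 0^+$ for every $\xi\in\mathbb{R}^d$, obtained by truncating $e^{-ix\cdot\xi}$ with a smooth cutoff in $C_0(\mathbb{R}^d)$ and using that $(\eta_t)$ is tight near $t=0$. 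A Cauchy-type argument for bounded continuous multiplicative one-parameter families on $\mathbb{C}$ then yields a unique $\mu(\xi)\in\mathbb{C}$ with $\mathrm{Re}\,\mu(\xi)\ge 0$ such that $f_t(\xi)=e^{-t\mu(\xi)}$; specialising at $\xi=0$ gives $\mu(0)\ge 0$, and positive definiteness of $\xi\mapsto(2\pi)^{-d/2}e^{-t\mu(\xi)}=\widetilde{\eta_t}(\xi)$ is automatic since $\eta_t$ is a nonnegative measure. Continuity of $\mu$ follows from joint continuity of $(t,\xi)\mapsto f_t(\xi)$ by choosing $t_0>0$ small enough that $f_{t_0}$ lies on a disc on which a continuous branch of the complex logarithm is available, and then reading off $\mu(\xi)=-t_0^{-1}\log f_{t_0}(\xi)$.

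For the converse I take a continuous negative definite $\mu$, observe that $\xi\mapsto(2\pi)^{-d/2}e^{-t\mu(\xi)}$ is continuous and positive definite for each $t\ge 0$ by the very definition of negative definiteness, and invoke Bochner's theorem to obtain a unique bounded nonnegative Borel measure $\eta_t$ on $\mathbb{R}^d$ with $\widetilde{\eta_t}(\xi)=(2\pi)^{-d/2}e^{-t\mu(\xi)}$. Property $(i)$ then reads $\eta_t(\mathbb{R}^d)=e^{-t\mu(0)}\le 1$; the semigroup identity $(ii)$ follows from $e^{-(t+s)\mu}=e^{-t\mu}e^{-s\mu}$ together with the convolution theorem and injectivity of the Fourier transform on bounded measures; and vague convergence $(iii)$ is obtained from the pointwise convergence $\widetilde{\eta_t}(\xi)\to(2\pi)^{-d/2}$ as $t\to 0^+$ combined with the uniform bound $\eta_t(\mathbb{R}^d)\le 1$, via a L\'evy continuity argument. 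Uniqueness of $(\eta_t)_{t\ge 0}$ again comes from Fourier injectivity.

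The main obstacle I anticipate is the extraction of a globally defined continuous $\mu$ in the forward direction. The functional equation $f_{t+s}=f_t f_s$ together with pointwise continuity only at $t=0$ does not by itself license taking a complex logarithm, since one must first exclude that $f_t(\xi)$ can vanish for some $(t,\xi)$. This is handled by noting that $|f_t(\xi)|$ is itself multiplicative in $t$ and converges to $1$ as $t\to 0^+$, so $|f_t(\xi)|>0$ for every $t\ge 0$ and $\xi\in\mathbb{R}^d$; the semigroup law then extends the local branch of the logarithm near $t=0$ to all $t\ge 0$ in a manner that is continuous jointly in $(t,\xi)$, which is where the bulk of the technical work sits.
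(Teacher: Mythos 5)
The paper does not prove this statement at all: it is quoted verbatim as a known result from Jacob's monograph (the classical correspondence between convolution semigroups and continuous negative definite functions, cf.\ the Schoenberg--Bochner theory), so there is no in-paper argument to compare against. Your proof is the standard one for that cited result and is essentially correct: Fourier transform, the multiplicative functional equation $f_{t+s}=f_tf_s$, non-vanishing via $|f_t(\xi)|=|f_{t/n}(\xi)|^n\to 1$, and Bochner plus L\'evy continuity for the converse.

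One small inaccuracy worth flagging: in the forward direction you cannot in general choose a single $t_0>0$ so that $f_{t_0}(\xi)$ lies in one disc admitting a continuous logarithm branch \emph{for all} $\xi$ simultaneously (already for $\mu(\xi)=ib\cdot\xi$ the value $f_{t_0}(\xi)$ winds around the unit circle as $\xi$ varies), so the formula $\mu(\xi)=-t_0^{-1}\log f_{t_0}(\xi)$ only makes sense with $t_0$ chosen locally in $\xi$, together with a verification that the locally defined principal-branch logarithms patch consistently with the pointwise-defined $\mu$. Your closing paragraph correctly identifies this as the technical crux and sketches the right repair (non-vanishing plus extension of the local branch along the semigroup law, or equivalently $\mu(\xi)=-\lim_n n\bigl(f_{1/n}(\xi)-1\bigr)$ locally uniformly), so I regard the argument as complete in outline.
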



\begin{theorem}\label{Lem:Sub:Boch} For any Bernstein function $g\colon \mathbb{R}\to\mathbb{R}$ and any  continuous negative definite function $\mu\colon \mathbb{R}^d\to \mathbb{C}$, the function $(g\circ\mu)$ is continuous negative definite as well. Consequently, there exists a unique convolution semigroup $(\eta_t^g)_{t\ge 0}$ such that 
\[
\widetilde{\eta_t^g}(\xi)=(2\pi)^{-d/2}e^{-tg(\mu(\xi))},\quad t\ge 0, \quad \xi\in\mathbb{R}^d.
\]
\end{theorem}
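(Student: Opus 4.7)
The plan is to construct the desired convolution semigroup explicitly by Bochner subordination, using an auxiliary subordinator attached to the Bernstein function $g$, and then to recover the negative definiteness of $g\circ\mu$ as a consequence.

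First, I would invoke the Lévy--Khintchine / Bernstein representation of $g$: every Bernstein function $g\colon(0,\infty)\to\mathbb{R}$ extends continuously to $[0,\infty)$ and admits an integral representation
\[
g(x)=a+bx+\int_0^\infty \bigl(1-e^{-sx}\bigr)\,\nu(ds),
\]
where $a,b\ge 0$ and $\nu$ is a measure on $(0,\infty)$ with $\int_0^\infty \min(1,s)\,\nu(ds)<\infty$. A standard consequence is that for every $t\ge 0$ the function $x\mapsto e^{-tg(x)}$ is completely monotone on $[0,\infty)$, so there is a unique family of sub-probability measures $(\lambda_t^g)_{t\ge 0}$ on $[0,\infty)$ with $\int_0^\infty e^{-sx}\lambda_t^g(ds)=e^{-tg(x)}$, which forms a convolution semigroup on $[0,\infty)$ (the subordinator associated with $g$).

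Next, given the continuous negative definite $\mu$, Theorem \ref{Theo:Conv:Semi} provides the convolution semigroup $(\eta_s)_{s\ge 0}$ on $\mathbb{R}^d$ with $\widetilde{\eta_s}(\xi)=(2\pi)^{-d/2}e^{-s\mu(\xi)}$. I would then define the subordinated family
\[
\eta_t^g(B)=\int_0^\infty \eta_s(B)\,\lambda_t^g(ds),\qquad B\in\mathcal{B}(\mathbb{R}^d),\ t\ge 0,
\]
understood as a vague (Bochner) integral of measures. The verification that $(\eta_t^g)_{t\ge 0}$ is a convolution semigroup on $\mathbb{R}^d$ is then routine: total mass is $\le 1$ because $\lambda_t^g([0,\infty))=e^{-tg(0)}\le 1$; the semigroup identity follows from Fubini together with $\lambda_{t+s}^g=\lambda_t^g\star\lambda_s^g$ and $\eta_{u_1}\star\eta_{u_2}=\eta_{u_1+u_2}$; vague continuity at $t=0$ is obtained from the corresponding property of $\lambda_t^g$ combined with the continuity of $s\mapsto \eta_s$ and dominated convergence applied to $\int\phi\,d\eta_t^g$ for $\phi\in C_0(\mathbb{R}^d)$.

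Computing the Fourier transform, I get
\[
\widetilde{\eta_t^g}(\xi)=\int_0^\infty \widetilde{\eta_s}(\xi)\,\lambda_t^g(ds)=(2\pi)^{-d/2}\int_0^\infty e^{-s\mu(\xi)}\,\lambda_t^g(ds).
\]
Since $\mathrm{Re}\,\mu(\xi)\ge 0$ for every negative definite $\mu$, the identity $\int_0^\infty e^{-sz}\lambda_t^g(ds)=e^{-tg(z)}$ extends by analytic continuation from $z>0$ to $\{\mathrm{Re}\,z\ge 0\}$ (continuity up to the boundary follows by dominated convergence). Applying this with $z=\mu(\xi)$ yields $\widetilde{\eta_t^g}(\xi)=(2\pi)^{-d/2}e^{-tg(\mu(\xi))}$. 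By the converse part of Theorem \ref{Theo:Conv:Semi}, the function $\xi\mapsto g(\mu(\xi))$ must then be continuous negative definite, and uniqueness of the convolution semigroup $(\eta_t^g)_{t\ge 0}$ producing this exponential is also provided by that theorem.

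The main technical obstacle is the extension of the Laplace identity $\int_0^\infty e^{-sz}\lambda_t^g(ds)=e^{-tg(z)}$ from real $z\ge 0$ to complex $z$ with $\mathrm{Re}\,z\ge 0$, which is needed because $\mu(\xi)$ is generally complex-valued; this is handled by analyticity of both sides on $\{\mathrm{Re}\,z>0\}$ and the integrability bound $|e^{-sz}|=e^{-s\,\mathrm{Re}\,z}\le 1$. Everything else is an assembly of Fubini and the semigroup identities.
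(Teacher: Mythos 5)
Your proof is correct, and it is precisely the standard Bochner subordination argument: the paper itself offers no proof of this theorem but cites it from Jacob's monograph (Chapters 3--4), where the result is established exactly by building the subordinator $(\lambda_t^g)$ from the complete monotonicity of $e^{-tg}$, mixing the semigroup $(\eta_s)$ against it, and reading off $e^{-tg(\mu(\xi))}$ from the Laplace identity extended to $\{\mathrm{Re}\,z\ge 0\}$ --- the same subordinator the paper later invokes as $(\eta_t^*)$ via \cite[Theorem 4.3.1]{Jac-1996}. The only points you compress (measurability of $s\mapsto\eta_s(B)$, vague continuity of $t\mapsto\lambda_t^g$ at $0$, and the holomorphic extension of $g$ to the right half-plane through its L\'evy--Khintchine representation) are indeed routine and correctly flagged.
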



Since $\mu\colon \mathbb{R}^d\to \mathbb{C}$ is a continuous negative definite function, it follows from Theorem \ref{Theo:Conv:Semi} that there exists a convolution semigroup $(\eta_t)_{t\ge 0}$ associated to the symbol $\mu(\xi)$. In the situation of Theorem \ref{Lem:Sub:Boch}, the new convolution semigroup $(\eta_t^g)_{t\ge 0}$ is called convolution semigroup subordinate to $(\eta_t)_{t\ge 0}$. 


\medbreak

Next, we show the existence of a convolution semigroup associated to $\mu(\xi)=|\xi|^\rho$ with $\rho>0$. We first note that the symbol of $(-\Delta)$ is $|\xi|^2$ and the mapping $\xi\mapsto |\xi|^2$ is a continuous negative definite function. By Theorem \ref{Theo:Conv:Semi} there exists a unique convolution semigroup $(\eta_t)_{t\ge 0}$ associated to $|\xi|^2$. Further the operator
\[
T_t  v(x):=\int_{\mathbb{R}^d} v(x-y)\eta_t(dy), \; t\geq 0,
\]
defines a contraction $C_0$-semigroup on $L_p(\mathbb{R}^d)$, see e.g., \cite[Theorem 3.6.16]{Jac-1996}. By a direct computation, we obtain that for all $n\in\mathbb{N}$ the function $\xi\mapsto |\xi|^{2n}$, is continuous negative definite. Further, if $s\in(0,1]$, then $g(\lambda)=\lambda^s$ for $\lambda>0$, is a Bernstein function. 

We note that $|\xi|^{\rho} = g(|\xi|^{2n})$, with $s=\rho/2n$. We can choose $n\in\mathbb{N}$ such that $\rho/2n\in(0,1]$. Therefore, it follows from Theorem \ref{Lem:Sub:Boch} that $|\xi|^{\rho}$ is continuous negative definite and there exists a convolution semigroup $(\eta^{\rho}_t)_{t\ge 0}$ associated to 
$|\xi|^{\rho}$. Moreover,  the operator
\begin{equation}\label{Semi:Subor}
T^g_tv(x):=\int_{\mathbb{R}^d} v(x-y)\eta^{\rho}_t(dy), \; t\geq 0,
\end{equation}
defines a contraction $C_0$-semigroup on $L_p(\mathbb{R}^d)$. On the other hand, the operator $T^g_t$ can also obtained from the semigroup $T_t $ as follows
\[
T_t^g v = \int_0^{\infty} T_s v \eta^*_t(ds),
\]
where $(\eta^*_t)_{t\geq 0}$ is the convolution semigroup on $\mathbb{R}$ supported on $[0,\infty)$ asociated with the Bernstein function $g(\lambda)=\lambda^{s}$, $\lambda>0$ and  $s\in(0,1)$. The existence of such convolution semigroup is guaranteed by \cite[Theorem 4.3.1]{Jac-1996}. The semigroup $(T_t^g)_{t\geq 0}$ is called subordinate in the sense of Bochner to $(T_t)_{t\geq 0}$ with respect to $(\eta_t)_{t\geq 0}$ or equivalently with respect to $|\xi|^{2}$.  


The convolution semigroup $(\eta^{\rho}_t)_{t\ge 0}$ is well-known in case of $\rho=2$, that is the Gaussian semigroup 
\[
\eta^2_t(x)=(4\pi t)^{\frac{-d}{2}}\exp\left(\frac{-|x|^2}{4t}\right), \quad t>0, \quad x\in\mathbb{R}^d.
\]
For $\rho\in(0,2)$, the corresponding convolution semigroup $(\eta_t^\rho)_{t\ge 0}$ is called symmetric stable semigroup, see \cite[Example 3.9.17]{Jac-1996}.  In general $(\eta^\rho_t)_{t\ge 0}$ is not known in an explicit form. However, it has been proved in \cite[Lemma 3]{Bog-Jak-2007} that 
\[
\eta^\rho_t(x)\asymp \dfrac{t}{(t^{2/\rho}+|x|^2)^{(d+\rho)/2}}, \quad t>0\quad x\in\mathbb{R}^d,
\]
where $\asymp$ means that the ratio bounded by a constant factor from above and below. In the case $\rho=1$, we have the explicit formula
\[
\eta^1_t(x)=\dfrac{\Gamma((d+1)/2)}{\pi^{(d+1)/2}}\dfrac{t}{(t^2+|x|^2)^{(d+1)/2}}, \quad t>0\quad x\in\mathbb{R}^d.
\]
In particular, if $d=1$, this semigroup is known as Cauchy semigroup, (see \cite[Example 3.9.17]{Jac-1996}).

\bigskip


\subsection*{Fundamental Solution $Z$} By definition the fundamental solution $Z(t,x)$ of the problem (\ref{Equation:1})-(\ref{Equation:2}) is the distributional solution of
\begin{equation}\label{Fund:sol}
\partial_t(k*[Z-Z_0])+(-\Delta)^{\rho/2}Z=0,\quad t > 0,\  x\in \mathbb{R}^d,\quad Z_{{|}_{t=0}}=Z_0:=\delta_0,\  x\in \mathbb{R}^d,
\end{equation}
\noindent where $\delta_0$ stands for the Dirac delta distribution.  Applying Fourier transform with respect to $x$, we see that $\widetilde{Z}$ solves the equation
\begin{align*}
\partial_t\bigl(k*[\widetilde{Z}(\cdot,\xi)-1]\bigr)(t)+|\xi|^\rho\widetilde{Z}(t,\xi)&=0,\quad t > 0,\quad \xi\in \mathbb{R}^d,\\
\widetilde{Z}(0,\xi)&=1,\quad\xi\in \mathbb{R}^d.
\end{align*}

\noindent Since $\widetilde{Z}(0,\xi)=1$ and $(k,\ell)\in(\mathcal{PC})$, the preceding equation is equivalent to the Volterra equation
\begin{equation}\label{Eq:Volterra}
\widetilde{Z}(t,\xi)+|\xi|^\rho\bigl(\ell*\widetilde{Z}(\cdot,\xi)\bigr)(t)=1,\quad t > 0,\quad \xi\in \mathbb{R}^d.
\end{equation}

Therefore the solution of the equation (\ref{Eq:Volterra}) is given by $\widetilde{Z}(t,\xi)=s(t,|\xi|^\rho),$ for $t\geq 0$ and $\xi\in \mathbb{R}^d,$ where $s(t,|\xi|^\rho)$ is the function defined in \eqref{Func:s}.


\medskip

In what follows we construct a function $Z$ such that $\mathcal{F}_{_{x\to\xi}}(Z(t,x))=s(t,|\xi|^\rho)$. Recalling that $(k,\ell)\in (\mathcal{PC})$ implies that $\ell$ is a completely positive function \cite[Proposition 4.5]{Pru-1993} , which in turn implies that the function
\[
\varphi(\lambda)=\lambda\widehat{k}(\lambda)=\frac{1}{\widehat{\ell}(\lambda)},\quad \lambda>0,
\]
\noindent is a Bernstein function. Further, for $\tau\ge 0$ we define $\psi_\tau(\lambda)=\exp(-\tau\varphi(\lambda))$ and using again \cite[Proposition 4.5]{Pru-1993} we conclude that $\psi_\tau$ is completely monotone. By Bernstein's theorem (see \cite[Theorem 1.4]{Sch-Son-Von-2010}) there exists a unique non-decreasing function $w(\cdot,\tau)\in BV(\mathbb{R}_+)$ normalized by $w(0,\tau)=0$ and left-continuous such that
\[
\widehat{w}(\lambda,\tau)=\int_0^\infty e^{-\lambda \sigma}w(\sigma,\tau)d\sigma=\frac{\psi_\tau(\lambda)}{\lambda},\quad \lambda>0.
\]

The function $w(t,\tau)$ is the so-called {\it propagation function} associated with the
completely positive kernel $\ell$, see \cite[Section 4.5]{Pru-1993}. Some properties of
$w$ can be found in \cite[Proposition 4.9]{Pru-1993}. For example, $w(\cdot, \cdot)$ is Borel measurable on $\mathbb{R}_+\times \mathbb{R}_+$, the function $w(t, \cdot)$ is non-increasing and right-continuous on $\mathbb{R}_+$, and $w(t, 0) = w(t, 0+) = 1$ as well as $w(t,\infty) = 0$ for all $t > 0$. For our purposes, the most important property of $w$ is the following relation with $s(t,\mu)$, 
\begin{equation}\label{Relax:Propag}
s(t,\mu)=-\int_0^\infty e^{-\tau\mu }  w(t,d\tau), \quad t>0,\ \mu\ge 0.
\end{equation}

We now define
\begin{equation}\label{Fund:Sol}
Z(t,x)=-\int^\infty_0 \eta^\rho_\tau(x) w(t,d\tau)\quad t>0,\ x\in\mathbb{R}^d,
\end{equation}

\noindent where  $(\eta^\rho_t)_{t\ge 0}$ is the convolution semigroup associated to $|\xi|^{\rho}$. By Theorem \ref{Lem:Sub:Boch}, we have that $\mathcal{F}(\eta^\rho_t)(\xi)=\exp(-t|\xi|^\rho)$ for all $t>0$. By (\ref{Relax:Propag}) we have
\[
\widetilde{Z}(t,\xi)=-\int^\infty_0 \widetilde{\eta^\rho_\tau}(\xi) w(t, d\tau)=-\int^\infty_0 e^{-\tau|\xi|^\rho}w(t,d\tau)=s(t,|\xi|^\rho),\quad t>0,
\]

\noindent and $Z$ defined in \eqref{Fund:Sol} is the fundamental solution of \eqref{Fund:sol}.

\medskip

\subsection*{Critical case.} We point out that, in general $Z(t,\cdot)$ does not belong to $L_p(\mathbb{R}^d)$ for arbitrary $p\ge 1$. Indeed, if we assume that $Z(t,\cdot)\in L_p(\mathbb{R}^d)$ for $p\le 2$, then the Haussdorf-Young's inequality  implies that
\[
|\widetilde{Z}(t,\cdot)|_{p'}\le |Z(t,\cdot)|_p<\infty,
\]
where $p'$ is the conjugate exponent of $p$. By definition we have $\widetilde{Z}(t,\xi)=s(t,|\xi|^\rho)$, therefore using the estimates for $s(t,|\xi|^\rho)$ given in (\ref{Est:s}), we have that
\[
\int_{\mathbb{R}^d}\dfrac{d\xi}{(1+|\xi|^\rho k(t)^{-1})^{p'}}<\infty,
\]
which in turns implies (by changing to polar coordinates) that
\[
\int_0^\infty \dfrac{r^{d-1}}{(1+r^\rho)^{p'}}dr<\infty.
\]
Hence $\rho p'-(d-1)>1,$ which is equivalent to $p<\dfrac{d}{d-\rho}$. 

Further, if $p=\frac{d}{d-\rho}$ and $d>2\rho$, then $Z(t,\cdot)\notin L_p(\mathbb{R}^d)$. Indeed, if $d>2\rho$, then $1<\frac{d}{d-\rho}<2$. The conclusion follows from the Haussdorf-Young's inequality. For the sake of brevity, we denote 
\begin{equation}\label{sigma:1}
\sigma_1(\rho,d)=\begin{cases}\dfrac{d}{d-\rho},& d>\rho,\\ \infty, & \text{otherwise}.\end{cases}
\end{equation}

\medskip

\subsection*{Variation of parameters formula for Volterra equations} Defining the operator family $\{S(t)\}_{t\ge 0}$ by
\begin{equation}\label{Sub:Pr}
S(t)v(x)= -\int_{0}^{\infty} T^g_{\tau} v(x) \, w(t,d\tau),
\end{equation}
we obtain
\[
S(t)v(x)= -\int_0^{\infty} (v\star \eta_{\tau}^{\rho})(x)w(t,d\tau)  =(Z(t, \cdot)\star v)(x),
\]
by (\ref{Semi:Subor}) and \eqref{Fund:Sol}. Moreover,  $\{S(t)\}_{t\ge 0}$ is a resolvent family on $L_p(\mathbb{R}^d)$ by \cite[Corollary 4.5]{Pru-1993}. The family defined by \eqref{Sub:Pr} is the so-called {\it subordinated resolvent family} in the sense of Pr\"uss, (see \cite[Chapter 4]{Pru-1993}), to the semigroup $\{T_t^g\}_{t\ge 0}$. 

By condition $(\mathcal{PC})$ we note that equation \eqref{Equation:1} is equivalent to the Volterra equation
\begin{equation}\label{Eq:Volterra:2}
u + \ell \ast (-\Delta)^{\rho/2} u = u_0 + \ell \ast f,
\end{equation}
 Hence by the {\it variation of parameters formula} for Volterra equations (see \cite[Proposition 1.2]{Pru-1993}) we have that the mild solution of \eqref{Equation:1}-\eqref{Equation:2} is given by
\begin{equation}\label{Variation}
u(t,\cdot)=\frac{d}{dt} \bigl(S\ast (u_0 + \ell \ast f)\bigr)(t) = \bigl(Z(t,\cdot)\star u_0\bigr)(\cdot) + \int_0^t Y(t-s,\cdot)\star f(s,\cdot)ds,
\end{equation}
\noindent where $Y(t,x)$ solves the Volterra equation of the first kind $(Y(\cdot,x)\ast k)(t) =Z(t,x)$. Since $(k,\ell)\in(\mathcal{PC})$ we have that $\widetilde{Y}(t,\xi)=\partial_t\bigr(\ell*\widetilde{Z}(\cdot,\xi)\bigl)(t) = r(t,|\xi|^\rho)$ for all $t > 0$, where the function $r(t,|\xi|^\rho)$ is defined as solution of  \eqref{Func:r}.

We remark that a function described by the variation parameters formula is not necessarily a strong solution of the problem \eqref{Equation:1}-\eqref{Equation:2}. This depends on the properties of $u_0$ and $f$, see \cite[Proposition 1.3]{Pru-1993}.

We also observe that if $\ell(t)=1$ for $t\geq 0$ in equation \eqref{Eq:Volterra:2}, then
\[
Z(t,x)=Y(t,x)=\eta^g_t(x), \; t>0, \; x\in \mathbb{R}^d.
\] 
Moreover, the function $u$ given by \eqref{Variation} is the unique solution of the local in time equation
\begin{align*}
u_t+(-\Delta)^{\rho/2} u &= f,\quad t > 0,\; x\in \mathbb{R}^d,\\
  u_0(x) &= u(0,x),\quad  x\in \mathbb{R}^d.
\end{align*}
In this case the mild solution can be written as
\[
u(t,x) = T^g_t u_0(x) + \int_0^t T_s^g f(t-s,x)ds,
\]
where $(T^g_t)_{t\geq 0}$ is a contraction $C_0$-semigroup defined in \eqref{Semi:Subor}.

Now, considering $k(t)=g_{1-\alpha}(t)$ with $\alpha\in(0,1)$ and $t>0$, problem \eqref{Equation:1}-\eqref{Equation:2} takes the form of the equation studied in  \cite{Kem-Sil-Zach-2017} and \cite{Kim-Lim-2016}. Further, in this case we have $\ell(t)=g_\alpha(t)$, and explicit formulas for the relaxation function and the integrated relaxation function, that is
\[
s(t,|\xi|^\rho)=(2\pi)^{-d/2}E_{\alpha,1}(-t^\alpha|\xi|^\rho),\  t\ge 0,
\]
and
\[
r(t,|\xi|^\rho)=(2\pi)^{-d/2}t^{\alpha-1}E_{\alpha,\alpha}(-t^\alpha|\xi|^\rho),\ t\ge 0,
\]
\noindent where the function $E_{\alpha,\beta}$ corresponds to the generalized Mittag-Leffler function, see e.g. \cite[Apendix E.2]{Mai-2010}. However, for general kernels $(k,\ell)\in(\mathcal{PC})$, there is no explicit formula for $s_\mu$ and $r_\mu$ in the literature. Hence, the analysis of the properties of the solution of the problem is more difficult. In a section below we illustrate how our results can be applied for different examples $(k,\ell)\in(\mathcal{PC})$ finding the decay rates of the solution to the corresponding non-local problem \eqref{Equation:1}-\eqref{Equation:2}.

\medbreak

We point that the procedure described in this section can be extended to a linear operator $\mathcal{L}$ on $L_p(\mathbb{R}^d)$ whose symbol $\mu :\mathbb{R}^d \to \mathbb{C}$ is given by the L\'evy-Khinchin formula (cf. \cite[Theorem 3.7.7]{Jac-1996}), that is
\[
\mu(\xi) = c + i b\cdot \xi + q(\xi) +\int_{\mathbb{R}^d\setminus \{0\}} \left(1-e^{-ix\cdot \xi} - \frac{ix\cdot \xi}{1+|x|^2}\right)\frac{1+|x|^2}{|x|^2} \nu(dx).
\]
Indeed, in this case the constant $c\geq 0$, the vector $b\in \mathbb{R}^d$, the symmetric positive semidefinite quadratic form $q$ on $\mathbb{R}^d$ and the finite measure $\nu$ on $\mathbb{R}^d\setminus \{0\}$ are uniquely determined by $\mu(\cdot)$ see \cite[Theorem 3.7.8]{Jac-1996}. Moreover, the function $\mu$ is a continuous negative definite (cf. \cite[Theorem 3.7.7 and Theorem 3.7.8]{Jac-1996}), thus by Theorem \ref{Theo:Conv:Semi} there exists a unique convolution semigroup $(\eta_t)_{t\geq 0}$  on $\mathbb{R}^d$ such that (\ref{Conv:Semi}) holds, hence $Z$ can be defined by
\[
Z(t,x)=-\int_0^{\infty} \eta_{\tau}(x)w(t,d\tau).
\]
Furthermore, by Theorem \ref{Lem:Sub:Boch} any Bernstein function $g$ composed with any continuous negative definite function $\mu$ is continuous and negative definite function and consequently there exists a unique convolution semigroup $(\eta_t^g)_{t\geq 0}$. Hence, the fundamental solution $Z$ for the problem  
\[
\partial_t(k\ast [Z-Z_0])(t) + g(\mathcal{L})Z =0, \; t>0,\; x\in \mathbb{R}^d,\; Z_{{|}_{t=0}}=Z_0:=\delta_0,\  x\in \mathbb{R}^d,
\]
with $g(\mathcal{L}) u(x) :=\mathcal{F}^{-1}((g\circ\mu)(\xi)) \mathcal{F}(u)(\xi))(x)$, can be defined as the subordinated kernel to the convolution semigroup $(\eta_t^g)_{t\geq 0}$ as in \eqref{Fund:Sol}.

\medbreak

The following result shows an interesting property of the kernels $Z$ and $Y$, which is well-known in case of $\rho=2$ and $k=g_{1-\alpha}$, see \cite{Eid-Koch-2004}, and for $\rho=2$ and $k$ as the example \ref{k:dist:order}, see \cite[Lemma 4.2 and Proposition 5.2]{Koch-2008}.

\begin{proposition} 
Let $(k,\ell)\in(\mathcal{PC})$ and $d\in \mathbb{N}$. Let $Z$ be the kernel given by \eqref{Fund:Sol} and $Y$ the solution of the Volterra equation of the first kind $(Y(\cdot,x)\ast k)(t)=Z(t,x)$. If $\rho=2$, or $\rho=1$ and $d=1$, then the following assertion holds.
\[
\int_{\mathbb{R}^d} Z(t,x) dx = 1, \text{ and } \int_{\mathbb{R}^d}Y(t,x)dx = \ell(t),\; t>0.
\]
 \end{proposition}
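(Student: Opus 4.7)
The plan is to handle the two identities separately, using explicit formulas for the semigroup $(\eta^\rho_\tau)_{\tau\ge 0}$ in the two special cases together with the normalization built into the propagation function $w$. In both regimes—$\rho=2$ with arbitrary $d$, and $\rho=1$ with $d=1$—the measure $\eta^\rho_\tau$ is absolutely continuous with an explicit nonnegative density (Gaussian resp. Cauchy) as recalled after \eqref{Semi:Subor}, and in particular $\int_{\mathbb{R}^d}\eta^\rho_\tau(x)\,dx=1$ for every $\tau>0$.

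For the identity $\int Z\,dx=1$ I would start from the subordination representation \eqref{Fund:Sol}. By \cite[Proposition 4.9]{Pru-1993}, $w(t,\cdot)$ is non-increasing and right-continuous on $\mathbb{R}_+$ with $w(t,0+)=1$ and $w(t,\infty)=0$, so $-w(t,d\tau)$ is a Borel probability measure on $\mathbb{R}_+$. Since $\eta^\rho_\tau(x)\ge 0$, Tonelli's theorem allows exchanging the order of integration, yielding
\[
\int_{\mathbb{R}^d}Z(t,x)\,dx=-\int_0^\infty\Big(\int_{\mathbb{R}^d}\eta^\rho_\tau(x)\,dx\Big)w(t,d\tau)=-\int_0^\infty w(t,d\tau)=1.
\]

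For $\int Y\,dx=\ell(t)$ I would convolve the defining relation $(Y(\cdot,x)\ast k)(t)=Z(t,x)$ with $\ell$ in $t$; using $(k\ast\ell)=1$ this reduces to
\[
\int_0^t Y(s,x)\,ds=(\ell\ast Z(\cdot,x))(t).
\]
Integrating in $x$ and applying Fubini on the right—justified by the previous part together with $\ell\in L_{1,loc}(\mathbb{R}_+)$ and nonnegativity—gives
\[
\int_{\mathbb{R}^d}\int_0^t Y(s,x)\,ds\,dx=\int_0^t\ell(t-s)\Big(\int_{\mathbb{R}^d}Z(s,x)\,dx\Big)ds=\int_0^t\ell(s)\,ds.
\]
Once the left-hand side is shown to equal $\int_0^t\!\int_{\mathbb{R}^d} Y(s,x)\,dx\,ds$, differentiation in $t$ produces $\int_{\mathbb{R}^d}Y(t,x)\,dx=\ell(t)$ for a.e.\ $t>0$.

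The main technical obstacle is precisely this last exchange of integration on the $Y$-side, i.e., establishing that $Y(s,\cdot)\in L_1(\mathbb{R}^d)$ with $s\mapsto\|Y(s,\cdot)\|_1$ locally integrable. I would handle this by deriving a subordination formula for $Y$ parallel to \eqref{Fund:Sol}. Starting from the Laplace identity $\widehat{r_\mu}(\lambda)=1/(\varphi(\lambda)+\mu)=\int_0^\infty e^{-\tau\mu}\psi_\tau(\lambda)\,d\tau$, with $\varphi(\lambda)=1/\widehat{\ell}(\lambda)$ and $\psi_\tau(\lambda)=e^{-\tau\varphi(\lambda)}$, Laplace inversion in $\lambda$ expresses $r(t,\mu)$ as the integral of $e^{-\tau\mu}$ against a nonnegative measure in $\tau$ obtained from $w$; inserting this into the inverse Fourier transform writes $Y(t,\cdot)$ as a nonnegative superposition of the densities $\eta^\rho_\tau(\cdot)$. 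Tonelli then both legitimates the interchange and shows independently that $\int_{\mathbb{R}^d}Y(t,x)\,dx$ equals the total mass of the subordinating measure, which by $r(t,0)=\ell(t)$ from \eqref{Func:r} is exactly $\ell(t)$—thereby confirming the identity and closing the argument.
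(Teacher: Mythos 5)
Your argument is correct and reaches both identities by a genuinely different route from the paper. The paper works entirely in the Laplace domain: it computes $\widehat{Z}(\lambda,x)$ from \eqref{Fund:Sol} via \cite[Proposition 4.9]{Pru-1993}, integrates in $x$ to get $\int_{\mathbb{R}^d}\widehat{Z}(\lambda,x)\,dx=1/\lambda$, divides by $\widehat{k}(\lambda)$ using $\widehat{Z}=\widehat{Y}\,\widehat{k}$ to obtain $\widehat{\ell}(\lambda)$, and then inverts the Laplace transform. You instead argue in the time domain: for $Z$ you observe that $-w(t,d\tau)$ is a probability measure (total mass $w(t,0+)-w(t,\infty)=1$) and that $\eta^\rho_\tau$ is a probability density in the two admissible cases, so Tonelli gives the first identity at once --- shorter and more transparent than the paper's computation. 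For $Y$, your first route (convolve the defining relation with $\ell$, integrate in $x$, differentiate) is essentially the time-domain dual of the paper's division by $\widehat{k}$, and it faces the same integrability issue that the paper silently glosses over when it interchanges $\int_{\mathbb{R}^d}$ with the Laplace transform of $Y$; your proposed fix --- a subordination representation of $Y$ as a nonnegative superposition of the $\eta^\rho_\tau$ whose subordinating mass is $r(t,0)=\ell(t)$ --- is the right way to close that gap, and it yields the extra information $Y\ge 0$, which the paper's proof does not. The only piece you leave as a sketch is the Laplace inversion producing that representation; since $\psi_\tau$ is completely monotone and $w(\cdot,\tau)$ is nondecreasing with $w(0,\tau)=0$, this inversion is legitimate (it is the integral-resolvent analogue of \eqref{Fund:Sol}), so the outline is sound, though carrying it out in full would take roughly as much space as the paper's Laplace-domain calculation.
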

\begin{proof}
We note that 
\[
\int_{\mathbb{R}^d}\eta^2_t(x)dx=1, \text{ and } \int_{\mathbb{R}^d}\eta^1_t(x)dx=t^{1-d},\; t>0.
\]
Taking the Laplace transform to $Z$ in the temporal variable, we have
\begin{align*}
\widehat{Z}(\lambda,x)  & =-\int_0^{\infty}e^{-\lambda t}\int_0^{\infty} \eta^{\rho}_{\tau}(x)w(t,d\tau)dt = -\int_0^{\infty} \eta^{\rho}_{\tau}(x) \frac{1}{\lambda}\partial_{\tau}\psi_{\tau}(\lambda)d\tau\\
& = \int_0^{\infty} \eta^{\rho}_{\tau}(x) \hat{k}(\lambda) e^{-\tau \lambda \hat{k}(\lambda)}d\tau,
\end{align*}
by \cite[Chapter 4, Proposition 4.9]{Pru-1993}. Integrating the last equality over the spatial variable and by Fubini's theorem we obtain
\begin{equation}\label{Eq:Laplace-Z}
\int_{\mathbb{R}^d}\widehat{Z}(\lambda,x)   = \int_0^{\infty} \int_{\mathbb{R}^d}\eta^{\rho}_{\tau}(x) dx \, \hat{k}(\lambda)e^{-\tau \lambda \hat{k}(\lambda)} d\tau= \frac{1}{\lambda},
\end{equation}
for $\rho=2$, or for $\rho=1$ and $d=1$.

Since $\widehat{Z}(\lambda,x) = \widehat{Y}(\lambda,x) \widehat{k}(\lambda)$, we have 
\[
\int_{\mathbb{R}^d} \widehat{Y}(\lambda,x)dx = \frac{1}{\lambda \widehat{k}(\lambda)} = \widehat{\ell}(\lambda)
\]
by (\ref{Eq:Laplace-Z}) and the condition $(\mathcal{PC})$. Hence, the claim follows by the inversion of the Laplace transform.
\end{proof}


\section{Optimal $L_2$-decay for mild solutions }\label{SecL2} 

In this section, we derive $L_2$-estimates decay for the solutions of \eqref{Equation:1}-\eqref{Equation:2}. Our results generalize those obtained in \cite[Section 4]{Kem-Sil-Ver-Zach-2016} and \cite[Section 6]{Kem-Sil-Zach-2017}. We consider $f\equiv 0$. In this case, the solution of our problem is given by
\begin{equation}\label{Variation:2}
u(t,x)=\int_{\mathbb{R}^d}Z(t,x-y)u_0(y)dy=(Z(t,\cdot)\star u_0)(x).
\end{equation}


\begin{theorem}\label{Theo:Up:L2} Let $d\in \mathbb{N}$ such that $d\neq 2\rho$ and $u_0\in L_1(\mathbb{R}^d)\cap L_2(\mathbb{R}^d)$. If $(k,\ell)\in(\mathcal{PC})$ and $u$ is described by the formula (\ref{Variation:2}) then
\[
|u(t,\cdot)|_2\lesssim (1*\ell)(t)^{-\min\left\{1,\frac{d}{2\rho}\right\}},\  t>1.
\]
Here the notation $f(t)\lesssim g(t)$, $t>t_*$, means that there exists $C>0$ such that $f(t)\le C g(t)$ for $t>t_*$.  

\end{theorem}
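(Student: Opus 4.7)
The plan is to pass to the Fourier side, exploit the explicit Fourier symbol $\widetilde{Z}(t,\xi)=s(t,|\xi|^{\rho})$ together with the sharp upper estimate in \eqref{Est:s}, and split the resulting $L_2$-integral into low and high frequencies with a cutoff tuned to the regime $d<2\rho$ or $d>2\rho$.

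First, from \eqref{Variation:2} and Plancherel's theorem, writing out the Fourier transform of a convolution with the paper's normalization,
\begin{equation*}
|u(t,\cdot)|_2 \;=\; |\widetilde{u}(t,\cdot)|_2 \;\lesssim\; \bigl\| s(t,|\cdot|^{\rho})\, \widetilde{u_0}\bigr\|_{L_2(\mathbb{R}^d)}.
\end{equation*}
Denote $\Lambda(t):=(1*\ell)(t)$. By the upper bound in \eqref{Est:s}, $s(t,|\xi|^{\rho})\le (1+|\xi|^{\rho}\Lambda(t))^{-1}$. For any $R>0$ I split $\mathbb{R}^d=\{|\xi|\le R\}\cup\{|\xi|>R\}$. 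On the low-frequency part I bound $|\widetilde{u_0}|$ uniformly by $|\widetilde{u_0}|_\infty \lesssim |u_0|_1$, while on the high-frequency part I bound $s(t,|\xi|^\rho)$ uniformly by $(1+R^\rho\Lambda(t))^{-1}$ and use $|\widetilde{u_0}|_2=|u_0|_2$. This yields
\begin{equation*}
\bigl\| s(t,|\cdot|^{\rho})\, \widetilde{u_0}\bigr\|_{L_2}^2 \;\lesssim\; |u_0|_1^{\,2}\!\!\int_{|\xi|\le R}\!\!\frac{d\xi}{(1+|\xi|^{\rho}\Lambda(t))^{2}} \,+\, \frac{|u_0|_2^{\,2}}{(1+R^{\rho}\Lambda(t))^{2}}.
\end{equation*}

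Next, polar coordinates and the substitution $u=|\xi|\Lambda(t)^{1/\rho}$ transform the low-frequency integral into
\begin{equation*}
c_d\,\Lambda(t)^{-d/\rho}\!\int_0^{R\Lambda(t)^{1/\rho}}\!\frac{u^{d-1}}{(1+u^{\rho})^{2}}\,du.
\end{equation*}
If $d<2\rho$, the integrand is integrable at infinity, so I take $R=\infty$ (no high-frequency term needed) and obtain $|u(t,\cdot)|_2^{\,2}\lesssim |u_0|_1^{\,2}\,\Lambda(t)^{-d/\rho}$, which is the claim with exponent $d/(2\rho)$. If $d>2\rho$, I fix $R=1$; the truncated integral behaves like $(R\Lambda(t)^{1/\rho})^{d-2\rho}/(d-2\rho)$ for $t>1$ (since $\Lambda(t)$ is bounded below by $\Lambda(1)>0$), giving a contribution of order $\Lambda(t)^{-2}$, and the high-frequency term is of the same order $(1+\Lambda(t))^{-2}\asymp \Lambda(t)^{-2}$. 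Taking square roots produces the exponent $1$.

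The only real obstacle is the borderline case $d=2\rho$, in which the $u$-integral diverges logarithmically at infinity and the balancing between the two pieces produces an extra $\log\Lambda(t)$ factor; this is precisely why the hypothesis $d\neq 2\rho$ appears in the statement. Everything else reduces to routine scaling once Plancherel and \eqref{Est:s} are in place; the argument uses no information about $u_0$ beyond $u_0\in L_1\cap L_2$, so the constant in $\lesssim$ depends only on $d$, $\rho$, $\Lambda(1)$, $|u_0|_1$ and $|u_0|_2$.
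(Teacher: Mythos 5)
Your proof is correct. For $d<2\rho$ it coincides with the paper's argument (Plancherel, the upper bound in \eqref{Est:s}, $|\widetilde{u_0}|_\infty\lesssim|u_0|_1$, and the scaling substitution). For $d>2\rho$ you take a genuinely different route: the paper interpolates to get $u_0\in L_p(\mathbb{R}^d)$ with $p=\frac{2d}{2\rho+d}$, invokes the Hardy--Littlewood--Sobolev inequality to conclude $(-\Delta)^{-\rho/2}u_0\in L_2(\mathbb{R}^d)$, and then absorbs the factor $|\xi|^{\rho}s(t,|\xi|^{\rho})\le [(1*\ell)(t)]^{-1}$ pointwise; you instead split frequencies at $|\xi|=R$ with $R=1$, estimate the truncated low-frequency integral by scaling, and control the tail using only $|\widetilde{u_0}|_2=|u_0|_2$. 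Your version is more elementary (no fractional integration theorem) and uses nothing beyond $u_0\in L_1\cap L_2$, at the mild cost that the bound is only uniform for $t$ bounded away from $0$ (you need $(1*\ell)(t)\ge(1*\ell)(1)>0$ to absorb the subcritical term $\Lambda(t)^{-d/\rho}$ into $\Lambda(t)^{-2}$), whereas the paper's HLS argument gives the $d>2\rho$ estimate for all $t>0$; since the theorem only asserts the bound for $t>1$, this is immaterial. Your remark on the logarithmic loss at $d=2\rho$ correctly explains the hypothesis $d\neq 2\rho$.
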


\begin{proof} Initially we consider $d<2\rho$. Since $u_0\in L_1(\mathbb{R}^d)\cap L_2(\mathbb{R}^d)$ we have that $\widetilde{u_0}\in C_0(\mathbb{R}^d)\cap L_2(\mathbb{R}^d)$ and by Plancherel's theorem we have

\begin{align*}
(2\pi)^d |u(t,\cdot)|_2^2&=|\widetilde{u}(t,\cdot)|_2^2=\int_{\mathbb{R}^d} \widetilde{Z}(t,\xi)^2|\widetilde{u_0}(\xi)|_2^2d\xi\le |\widetilde{u_0}|_{\infty}^2\int_{\mathbb{R}^d} s(t,|\xi|^\rho)^2 d\xi \\
&\le C|u_0|^2_1\int_{\mathbb{R}^d} \dfrac{d\xi}{(1+|\xi|^\rho(1*\ell)(t))^2}= C|u_0|^2_1\int_{0}^\infty \dfrac{r^{d-1}dr}{(1+ r^\rho(1*\ell)(t))^2}\\
&\le \dfrac{C|u_0|^2_1}{(1*\ell(t))^{d/\rho}}\int_{0}^\infty \dfrac{r^{d-1}dr}{(1+r^\rho)^2}= K(1*\ell)(t)^{-d/\rho}, 
\end{align*}

\noindent for some $K>0$. It is clear that if $d\ge 2\rho$, this argument is not valid because the last integral is divergent. 

For $d>2\rho$, by interpolation arguments we note that $u_0\in L_p(\mathbb{R}^d)$ where $p= \frac{2d}{2\rho+d}$. The Hardy-Littlewood inequality on fractional integration (see \cite[Theorem 6.1.3]{Gra-2004})  implies that $(-\Delta)^{-\rho/2}u_0\in L_2(\mathbb{R}^d)$. Therefore 
\begin{align*}
(2\pi)^d |u(t,\cdot)|_2^2&=|\widetilde{u}(t,\cdot)|_2^2=\int_{\mathbb{R}^d} \widetilde{Z}(t,\xi)^2|\widetilde{u_0}(\xi)|_2^2d\xi= \int_{\mathbb{R}^d} |\xi|^{2\rho}s(t,|\xi|^\rho)^2 \bigl||\xi|^{-\rho}|\widetilde{u_0}(\xi)|\bigr|^2 d\xi\\
&\le \dfrac{1}{(1*\ell)(t)^2}\int_{\mathbb{R}^d} \dfrac{|\xi|^{2\rho}(1*\ell)(t)^2}{(1+|\xi|^{2\rho}(1*\ell)(t)^2)^2}\bigl||\xi|^{-\rho}|\widetilde{u_0}(\xi)|\bigr|^2 d\xi\\
&\le \dfrac{1}{(1*\ell)(t)^2}\int_{\mathbb{R}^d} |\xi|^{-\rho}|\widetilde{u_0}(\xi)|^2 d\xi=\dfrac{(2\pi)^d|(-\Delta)^{-\rho/2}u_0|^2_2}{(1*\ell)(t)^2}.
\end{align*}

Consequently, $|u(t,\cdot)|_2\lesssim (1*\ell)(t)^{-\min\left\{1,\frac{d}{2\rho}\right\}},\ \text{a.a \ } t>1.$
\end{proof}


To see if the estimate obtained in Theorem \ref{Theo:Up:L2} is optimal we establish the following result. 


\begin{theorem}\label{Theo:Low:L2} Let $d\in \mathbb{N}$ and $u_0\in L_1(\mathbb{R}^d)\cap L_2(\mathbb{R}^d)$. Assume that $\widetilde{u_0}(0)\neq 0$. If $(k,\ell)\in(\mathcal{PC})$ and $u$ is described by the formula (\ref{Variation:2}) then

\[
|u(t,\cdot)|_2\gtrsim k(t)^{\min\left\{1,\frac{d}{2\rho}\right\}},\ \text{a.a \ } t>1.
\]
\end{theorem}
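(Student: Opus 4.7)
The plan is to run the Plancherel computation from Theorem \ref{Theo:Up:L2} in the reverse direction, replacing the upper bound on $s(t,|\xi|^\rho)$ coming from \eqref{Est:s} by its lower bound
\[
s(t,|\xi|^\rho) \;\ge\; \frac{1}{1+|\xi|^\rho k(t)^{-1}} \;=\; \frac{k(t)}{k(t)+|\xi|^\rho}, \qquad t>0,\ \xi\in\mathbb{R}^d,
\]
and to localize the $\xi$-integral to a small ball where $|\widetilde{u_0}(\xi)|$ is bounded below. Since $u_0\in L_1(\mathbb{R}^d)$, the function $\widetilde{u_0}$ is continuous, and the hypothesis $\widetilde{u_0}(0)\neq 0$ yields constants $r_0>0$ and $c>0$ with $|\widetilde{u_0}(\xi)|^2\ge c$ for all $|\xi|\le r_0$. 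Combining Plancherel's theorem with the two previous inequalities gives
\[
(2\pi)^d |u(t,\cdot)|_2^2 \;\ge\; c\int_{|\xi|\le r_0}\frac{k(t)^2}{(k(t)+|\xi|^\rho)^2}\, d\xi.
\]

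Next I would pass to polar coordinates and perform the change of variable $r = k(t)^{1/\rho}\varrho$, which produces
\[
\int_{|\xi|\le r_0}\frac{k(t)^2}{(k(t)+|\xi|^\rho)^2}\, d\xi \;=\; \omega_{d-1}\, k(t)^{d/\rho}\int_{0}^{R(t)}\frac{\varrho^{d-1}}{(1+\varrho^\rho)^2}\, d\varrho,
\]
where $R(t)=r_0 k(t)^{-1/\rho}$ and $\omega_{d-1}$ is the surface measure of the unit sphere. For $t$ sufficiently large we have $k(t)\le 1$ (since $k$ is nonincreasing and locally integrable, this may be arranged by enlarging the implicit threshold), hence $R(t)\ge r_0$, and the analysis splits into two cases according to the behaviour of the integrand $\varrho^{d-1}(1+\varrho^\rho)^{-2}$ at infinity.

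If $d<2\rho$, then $\int_0^\infty \varrho^{d-1}(1+\varrho^\rho)^{-2}d\varrho$ is finite and strictly positive, so the last display is bounded below by a positive constant times $k(t)^{d/\rho}$, which yields $|u(t,\cdot)|_2\gtrsim k(t)^{d/(2\rho)}$, matching the exponent $\min\{1,d/(2\rho)\}=d/(2\rho)$. If $d\ge 2\rho$, I would instead estimate the integral from below by $\int_{1}^{R(t)} \varrho^{d-1-2\rho}\,d\varrho/4$, which grows at least like $R(t)^{d-2\rho}$ when $d>2\rho$ and like $\log R(t)$ when $d=2\rho$; in either case this cancels the factor $k(t)^{d/\rho}$ enough to leave $k(t)^{d/\rho}\cdot R(t)^{d-2\rho}\asymp k(t)^2$ (up to a logarithmic factor when $d=2\rho$, which is only helpful since $R(t)\to\infty$). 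Taking square roots gives $|u(t,\cdot)|_2\gtrsim k(t)$, which is the exponent $\min\{1,d/(2\rho)\}=1$.

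\textbf{Expected difficulty.} The only genuinely delicate point is ensuring that, as $t\to\infty$, the rescaled upper limit $R(t)=r_0 k(t)^{-1/\rho}$ is indeed large, because the argument essentially exploits the fact that the relevant ball in $\xi$-space shrinks at rate $k(t)^{1/\rho}$ and must therefore eventually lie inside the region $|\xi|\le r_0$ where $|\widetilde{u_0}|$ is quantitatively bounded below. This is automatic if $k(t)\to 0$; in the degenerate situation where $k$ is bounded below by a positive constant, the statement $|u(t,\cdot)|_2\gtrsim k(t)^{\min\{1,d/(2\rho)\}}$ is trivial since both sides are bounded, and the estimate at $t>1$ follows directly from continuity of $t\mapsto |u(t,\cdot)|_2$. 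The case split $d<2\rho$ vs.\ $d\ge 2\rho$ (with the mild logarithmic nuisance at $d=2\rho$ being absorbed) is the only structural subtlety; once the localization near $\xi=0$ is in place, everything else is a direct computation.
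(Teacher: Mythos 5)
Your argument is correct and starts from the same ingredients as the paper's proof (Plancherel, the lower bound in \eqref{Est:s}, and localization near $\xi=0$ via continuity of $\widetilde{u_0}$ and $\widetilde{u_0}(0)\neq 0$), but the final optimization is carried out differently. The paper uses the monotonicity of $\mu\mapsto s(t,\mu)$ to pull $s(t,r^\rho)^2$ out of the integral over $B_r$ and then makes \emph{two} choices of the radius, a fixed $r=R$ (yielding $r^d s(t,r^\rho)^2\gtrsim k(t)^2$) and a $t$-dependent $r(t)=R(1+k(t)^{-1})^{-1/\rho}$ (yielding $\gtrsim k(t)^{d/\rho}$), and keeps the better bound; this avoids any analysis of the convergence of a radial integral and works for all $t>1$ at once. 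You instead keep the pointwise bound $s(t,|\xi|^\rho)\ge k(t)/(k(t)+|\xi|^\rho)$ inside the integral and rescale by $k(t)^{1/\rho}$, so the two exponents emerge from a single computation via the case split $d<2\rho$ versus $d\ge 2\rho$. Both routes are valid, and yours is arguably the more transparent explanation of where the two regimes come from.

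Two small points to tighten. First, you do not actually need to "enlarge the implicit threshold": for $t>1$ one has $k(t)\le k(1)$, hence $R(t)=r_0k(t)^{-1/\rho}\ge r_0 k(1)^{-1/\rho}=:R_1>0$, and the elementary bound $\int_0^{R}\varrho^{d-1}(1+\varrho^\rho)^{-2}d\varrho\ge c(R_1,d,\rho)\,\max\{1,R^{\,d-2\rho}\}$ for all $R\ge R_1$ already gives the stated estimate on all of $(1,\infty)$; as written, your argument only covers $t>T$ and would need the (easy, but unstated) uniform lower bound on the compact piece $(1,T]$. Second, in the degenerate case $k(t)\ge c_0>0$, continuity of $t\mapsto|u(t,\cdot)|_2$ on the unbounded interval $(1,\infty)$ does not by itself give a uniform positive lower bound; the correct one-line justification is again the same Plancherel estimate, since $s(t,|\xi|^\rho)\ge (1+|\xi|^\rho c_0^{-1})^{-1}$ uniformly in $t$. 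Neither issue is structural.
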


\begin{proof} Let $t>0$, $R>0$ and $r\in(0,R]$. By Plancherel's theorem and the
monotonicity property of $s_\mu$ with respect to $\mu>0$ we obtain that
\begin{align*}
(2\pi)^d |u(t,\cdot)|_2^2&=|\widetilde{u}(t,\cdot)|_2^2=\int_{\mathbb{R}^d} \widetilde{Z}(t,\xi)^2|\widetilde{u_0}(\xi)|_2^2d\xi\ge \int_{B_r} \widetilde{Z}(t,\xi)^2|\widetilde{u_0}(\xi)|_2^2d\xi \\
&\ge\int_{B_r} s(t,|\xi|^\rho)^2|\widetilde{u_0}(\xi)|_2^2d\xi\ge s(t,r^\rho)^2\int_{B_r} |\widetilde{u_0}(\xi)|_2^2d\xi\\
&\geqslant r^d s(t,r^\rho)^2 r^{-d}\int_{B_r} |\widetilde{u_0}(\xi)|_2^2d\xi.
\end{align*}

We recall that $u_0\in L_1(\mathbb{R}^d)\cap L_2(\mathbb{R}^d)$ implies that $\widetilde{u_0}\in C_0(\mathbb{R}^d)\cap L_2(\mathbb{R}^d)$. Since $\widetilde{u_0}(0)\neq 0$ we can choose $R$ small enough such that
\begin{equation}\label{Def:R}
\left(r^{-d}\int_{B_r} |\widetilde{u_0}(\xi)|_2^2d\xi\right)>C, \text{\ \ for all \ } r\in(0,R],
\end{equation}
\noindent for some $C>0$. Using $R>0$ as in (\ref{Def:R}) we consider $r=R$ and note that 
\begin{align*}
r^d s(t,r^\rho)^2&=R^d s(t,R^\rho)^2\ge \dfrac{R^d}{(1+R^\rho k(t)^{-1})^2}\\
&\ge \dfrac{R^d k(t)^2}{(k(t)+R^\rho)^2} \ge \dfrac{R^d k(t)^2}{(k(1)+R^\rho)^2}\gtrsim k(t)^2, \ t>1.
\end{align*}

On the other hand, we can choose $r$ as $r(t)=\dfrac{R}{(1+k(t)^{-1})^{1/\rho}}<R$. We note that $k(t)^{-1}r(t)^\rho<R^\rho$ and 
\begin{align*}
r^d s(t,r^\rho)^2&\ge \dfrac{R^d}{(1+k(t)^{-1})^{d/\rho}} \dfrac{1}{(1+r(t)^\rho k(t)^{-1})^2 }\ge \dfrac{R^d}{(1+R^\rho)^2}\dfrac{k(t)^{\frac{d}{\rho}}}{(1+k(t))^{\frac{d}{\rho}}}\\
&\ge \dfrac{R^d}{(1+R^\rho)^2}\dfrac{k(t)^{\frac{d}{\rho}}}{(1+k(1))^{\frac{d}{\rho}}}\gtrsim k(t)^{\frac{d}{\rho}}, \ t>1.
\end{align*}
Therefore
\[
|u(t,\cdot)|_2\gtrsim k(t)^{\min\left\{1,\frac{d}{2\rho}\right\}},\ \text{a.a \ } t>1.
\]
\end{proof}


\begin{remark} According to Theorem \ref{Theo:Up:L2} and Theorem \ref{Theo:Low:L2} the decay rate  of the solution is optimal if there exists $T>0$ such that 
\begin{equation}\label{Optimal}
[(1\ast\ell)(t)]^{-1}\lesssim k(t).
\end{equation}
\end{remark}

We point out that (\ref{Optimal}) is valid for several kernels $(k,\ell)\in(\mathcal{PC})$ as we next illustrate with some examples. To this end, we introduce a version of Karamata-Feller Tauberian theorem, which establish that the asymptotic behaviour of a function $w(t)$ as $t\to \infty$ can be determined, under suitable conditions, by looking at the behaviour of its Laplace transform $\widehat{w}(z)$ as $z\to 0$, and vice versa. See the monograph \cite[Section 5, Chapter XIII]{Fell-1971} for a more general version and proofs.

\medskip

\begin{theorem} \label{Karamata}
Let $L:(0,\infty)\to (0,\infty)$ be a function that is {\em slowly varying at $\infty$}, that is, for every fixed
$x>0$ we have $L(tx)/L(t)\to 1$ as $t\to \infty$. Let $\beta>0$ and $w:(0,\infty)\rightarrow \mathbb{R}$ be a monotone function whose Laplace transform $\widehat{w}(z)$ exists for all $z\in \mathbb{C}_+:=\{\lambda\in \mathbb{C}:\, \mbox{Re}\,\lambda>0\}$. Then
\[
\widehat{w}(z) \sim \,\frac{1}{z^\beta}\,L\left(\frac{1}{z}\right)\quad \mbox{as}\;z\to 0\quad\; \mbox{if and only if} \quad \;
w(t)\sim \frac{t^{\beta-1}}{\Gamma(\beta)}L(t)\quad \mbox{as}\;t\to \infty.
\]
Here the approaches are on the positive real axis and the notation $f(t)\sim g(t)$ as $t\to t_*$ means that there exists a constant $C>0$ such that $\lim_{t\to t_*} f(t)/g(t) =C$. 
\end{theorem}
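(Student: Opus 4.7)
The plan is to prove the two implications separately, since the Abelian and Tauberian halves have quite different characters. I may assume without loss of generality that $w$ is nondecreasing; the nonincreasing case is handled by passing to $c-w$ for a suitable constant or by a parallel monotone sandwich in the Tauberian step.

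For the Abelian direction (asymptotics of $w$ imply asymptotics of $\widehat w$) I would substitute $u=zt$ in the Laplace integral to rewrite
\[
\frac{z^{\beta}\,\widehat{w}(z)}{L(1/z)}\;=\;\int_{0}^{\infty} e^{-u}\,u^{\beta-1}\cdot \frac{w(u/z)}{(u/z)^{\beta-1}L(u/z)}\cdot \frac{L(u/z)}{L(1/z)}\,du.
\]
For each fixed $u>0$, as $z\to 0^{+}$ the middle factor tends to $1/\Gamma(\beta)$ by the hypothesis on $w$, and the final factor tends to $1$ by slow variation of $L$. Potter's bounds for slowly varying functions supply an integrable majorant of the form $C_{\varepsilon}\,e^{-u}\,u^{\beta-1}(u^{\varepsilon}+u^{-\varepsilon})$, so dominated convergence delivers $z^{\beta}\widehat{w}(z)/L(1/z)\to 1$.

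For the Tauberian direction I would use Karamata's rescaling method. Set $w_{n}(t):=w(nt)/(n^{\beta-1}L(n))$; a direct change of variable gives
\[
\widehat{w_{n}}(z)\;=\;\frac{\widehat{w}(z/n)}{n^{\beta}L(n)},
\]
and the hypothesis on $\widehat{w}$ combined with $L(n/z)/L(n)\to 1$ yields $\widehat{w_{n}}(z)\to z^{-\beta}$ for every $z>0$. Since $z^{-\beta}$ is the Laplace transform of $t^{\beta-1}/\Gamma(\beta)$, I would conclude that $w_{n}(t)\to t^{\beta-1}/\Gamma(\beta)$ at continuity points by passing to the integrated primitives $U_{n}(t):=\int_{0}^{t} w_{n}(s)\,ds$, applying Helly's selection theorem to the monotone family $\{U_{n}\}$, identifying the limit via uniqueness of the Laplace transform, and finally recovering $w_{n}$ pointwise by a monotone sandwich using the difference quotients $[U_{n}(t+h)-U_{n}(t)]/h$. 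Substituting $s=nt$ and invoking slow variation of $L$ once more converts this into $w(s)\sim s^{\beta-1}L(s)/\Gamma(\beta)$.

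The main obstacle is the Tauberian half: asymptotics of $\widehat{w}$ near $0$ do not in general determine asymptotics of $w$, and the monotonicity hypothesis is exactly what rules out oscillatory counterexamples. Concretely, the delicate step is converting the Helly-type convergence of the $U_{n}$ into a pointwise statement for $w_{n}$, which forces the monotone sandwich argument and would fail for $w$ merely continuous or merely bounded.
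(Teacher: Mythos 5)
The paper does not prove Theorem \ref{Karamata} at all: it is quoted as a known result, with the proof deferred to Feller's monograph (Section 5, Chapter XIII of the cited reference), so there is no in-paper argument to compare yours against. Your proposal is, in outline, exactly the classical Karamata--Feller proof found in that reference and in Bingham--Goldie--Teugels: the Abelian half by the substitution $u=zt$ plus dominated convergence with Potter's bounds, and the Tauberian half by the rescaling $w_n(t)=w(nt)/(n^{\beta-1}L(n))$, the continuity theorem for Laplace transforms (via Helly selection and uniqueness), and the monotone density sandwich on the primitives $U_n$. The outline is correct; the monotonicity hypothesis is indeed used precisely where you say it is. Two details deserve explicit treatment if you write this out. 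First, in the Abelian half the pointwise asymptotics of $w$ and Potter's bounds only apply for $u/z$ large, so the range $u\in(0,zT_0)$ must be handled separately; its contribution is $z^{\beta}L(1/z)^{-1}\int_0^{T_0}e^{-zs}w(s)\,ds$, which tends to $0$ because $t^{\beta}L(t)\to\infty$ for $\beta>0$ and $w$ is locally integrable (the latter following from existence of $\widehat{w}$ on $\mathbb{C}_+$). Second, note that the paper's convention for $f\sim g$ allows an unspecified positive constant $C$ rather than limit $1$; this constant propagates linearly through both halves of your argument and through the Helly limit identification, so nothing breaks, but the normalization $1/\Gamma(\beta)$ in your displayed limits should be read as $C/\Gamma(\beta)$.
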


 \begin{example}\label{Ex:1} {\it The classical time fractional case}. We consider the pair 
\[
(k,\ell)=(g_{1-\alpha},g_{\alpha}),\ \text{\ with \ } \alpha\in(0,1).
\] 

A direct calculation shows that 
\[
(1\ast\ell)(t)=g_{1+\alpha}(t)\sim t^{\alpha}, \text{ \ as \ } t\to\infty.
\]
We note that $k(t)\sim t^{-\alpha}$ as $t\to\infty$. Therefore \eqref{Optimal} holds.
\end{example}

\begin{example}\label{Ex:2}
{\it Sum of two fractional derivatives}. Let $0<\alpha<\beta<1$, and 
\[
k(t)=g_{1-\alpha}(t)+g_{1-\beta}(t),\quad t>0.
\] 
By \cite[Theorem 5.4, Chapter 5]{Gri-Lon-Sta-1990}, there is a positive kernel $\ell \in L_{1,loc}(\mathbb{R}_+)$ such that, $k\ast \ell = 1$ on $(0,\infty)$. In particular $(k,\ell)\in(\mathcal{PC})$. Moreover, the Laplace transform of these kernels are given by
\[
\widehat{k}(\lambda)=\frac{1}{\lambda^{1-\alpha}}+\frac{1}{\lambda^{1-\beta}}, \text{\ and \ }\widehat{\ell}(\lambda)=\dfrac{1}{\lambda^\alpha+\lambda^\beta},\quad \lambda>0.
\]

Since $0<\alpha<\beta<1$, we have that
\[
\widehat{(1\ast\ell)}(\lambda)\sim\dfrac{1}{\lambda^{1+\alpha}},\quad \text{as\ }\lambda\to0.
\]
Therefore, using Theorem \ref{Karamata}, with $L(t)\equiv 1$, we conclude that 
\[
(1\ast \ell)(t)\sim t^{\alpha},\quad \text{as\ }t\to\infty.
\]
Since, $0<\alpha<\beta<1$, we have that $k(t)\sim t^{-\alpha}$ as $t\to \infty$. Hence, (\ref{Optimal}) holds.

These considerations extend trivially to kernels having th form $k(t) =\sum_{j=1}^m \kappa_j g_{1-\alpha_j}(t) $ with $\kappa_j > 0$ and $0 < \alpha_1 < \alpha_2 \cdot \cdots < \alpha_m< 1$.
\end{example}

\medskip


\begin{example}\label{Ex:3}{\it The time-fractional case with weight}. Let $0<\alpha<\beta<1$ and consider 
\[
k(t)=g_{\beta}(t)E_{\alpha,\beta}(-\omega t^{\alpha}),\quad t>0,
\] 
where $\omega>0$ and $E_{\alpha,\beta}$ is the generalized Mittag-Leffler function, see \cite[Appendix E.2]{Mai-2010}. Initially, we analyze the kernel $k$. It is well known that 
\[
\widehat{k}(\lambda)=\dfrac{\lambda^{\alpha-\beta}}{\lambda^\alpha+\omega}=\dfrac{1}{\lambda^{\beta-\alpha}}\dfrac{1}{\lambda^\alpha+\omega}, \text{\ for \ } \lambda>0.
\]

Hence, we have that 
\[
\widehat{k}(\lambda)=\dfrac{1}{\lambda^{\beta-\alpha}} L_1\left(\dfrac{1}{\lambda}\right), \text{\ for \ } \lambda>0,
\]
where $L_1(t)=\frac{t^\alpha}{1+\omega t^\alpha},$ for $t>0$. A direct computation shows that $L_1$ is a slowly varying function at $\infty$. Since, $\alpha<\beta$ we can apply Theorem \ref{Karamata} and conclude that 
\[
k(t)\sim t^{\beta-\alpha-1}, \text{\ as \ }t\to \infty.
\]

Furthermore, it follows from \cite[Theorem 5.4, Chapter 5]{Gri-Lon-Sta-1990} that there is a kernel $\ell \in L_{1,loc}(\mathbb{R}_+)$ such that, $k\ast \ell = 1$ on $(0,\infty)$. Therefore $(k,\ell)\in(\mathcal{PC})$ and    
\[
\widehat{(1\ast\ell)}(\lambda)=\dfrac{1}{\lambda^{2+\alpha-\beta}}(\lambda^\alpha+\omega)=\dfrac{1}{\lambda^{2+\alpha-\beta}}L_2\left(\dfrac{1}{\lambda}\right),\text{\ for \ }\lambda>0,
\]
where $L_2(t)=\frac{1+\omega t^{\alpha}}{t^\alpha}$, for $t>0$. A direct computation shows that $L_2$ is a slowly varying function at $\infty$. Since, $2+\alpha-\beta>0$ we can apply Theorem \ref{Karamata} to conclude that 
\[
(1\ast \ell)(t)\sim t^{1+\alpha-\beta}, \text{\ as \ }t\to \infty.
\]
Therefore, (\ref{Optimal}) holds.

\end{example}


\begin{example}\label{Ex:4}{\it An example of ultraslow diffusion}. Let $n\in\mathbb{N}$ and consider  
\[
k(t)=\int_0^1g_{\alpha}(t)\alpha^n d\alpha, \quad t>0.
\] 
by Fubini's Theorem we note that 
\begin{align*}
\widehat{k}(\lambda)&=\int_0^\infty e^{-\lambda t} k(t)dt=\int_0^\infty \int_0^1 e^{-\lambda t}\ \alpha^n g_{\alpha}(t)\, d\alpha \, dt\\
&=\int_0^1\alpha^n \int_0^\infty e^{-\lambda t}\ g_{\alpha}(t)\, dt\, d\alpha.
\end{align*}
Since $\widehat{g_\alpha}(\lambda)=\lambda^{-\alpha}$, we have that 
\begin{equation}\label{Laplace:k1}
\widehat{k}(\lambda)=\int_0^1 \alpha^n \lambda^{-\alpha}d\alpha=\dfrac{1}{\lambda}\left(\dfrac{\lambda n!}{\log^{n+1}(\lambda)}- \dfrac{n!}{\log^{n+1}(\lambda)}\sum_{m=0}^n\dfrac{\log^{m}(\lambda)}{m!}\right)=\dfrac{1}{\lambda} L_1\left(\dfrac{1}{\lambda}\right),
\end{equation}
where 
\[
L_1(t)=\dfrac{(-1)^{n+1}n!}{\log^{n+1}(t)}\left(\dfrac{1}{t}-\sum_{m=0}^n \dfrac{(-1)^m\log^{m}(t)}{m!}\right).
\]
 The function $L_1$ is slowly varying at $\infty$, and $L_1(t)\sim [\log(t)]^{-1}$ as $t\to \infty$. Therefore it follows from Theorem \ref{Karamata} that
\[
k(t)\sim \dfrac{1}{\log(t)},\quad \text{ as } t\to\infty.
\]

Moreover, using \cite[Proposition 3.1]{Koch-2008} there exists $\ell \in L_{1,loc}(\mathbb{R}^+)$ such that $(k*\ell)=1$. Therefore, we have that $(k,\ell)\in(\mathcal{PC})$ and 
\[
\widehat{(1*\ell)}(\lambda)=\dfrac{1}{\lambda}\cdot\dfrac{\log^{n+1}(\lambda)}{n!\left(\displaystyle\lambda-\sum_{m=0}^n \dfrac{\log^{m}(\lambda)}{m!}\right)}=\frac{1}{\lambda}L_2\left(\frac{1}{\lambda}\right), \text{\ for\ }\lambda>0.
\]
where 
\[
L_2(t)=\dfrac{(-1)^{n+1}t\log^{n+1}(t)}{\displaystyle n!\left(1-\sum_{m=0}^n \dfrac{(-1)^m t\log^m(t)}{m!}\right)},  t>0.
\] 

Since $L_2(t)=[L_1(t)]^{-1}$, we have that $L_2$ is a slowly varying function at $\infty$, and $L_2(t)\sim \log(t)$ as $t\to \infty$. In consequence, we to apply Theorem \ref{Karamata} and conclude that 
\[
(1\ast\ell)(t)\sim \log(t), \text{ \ as\ } t\to \infty.
\]
Therefore, \ref{Optimal} is valid.

\end{example}


\section{$L_r$-estimates of mild solutions}\label{S:Lr}

We start this section recalling the following results about the map $\mu\mapsto s(t,\mu)$. The proof can be found in \cite[Lemma 5.1 and Lemma 5.2]{Kem-Sil-Ver-Zach-2016}. 

\begin{lemma}\label{Lem:aux1}
Assume that $(k,\ell)\in (\mathcal{PC})$. Let $t\ge 0$. Then the map $\mu\mapsto s(t,\mu)$ belongs to $C^{\infty}(\mathbb{R}_+)$ and  
\begin{equation}\label{CM:s}
(-1)^{n}\partial_\mu^n s(t,\mu)\ge 0, \text{ \ for all \ } n\in\mathbb{N}_0, 
\end{equation}
Moreover, we have that 
\begin{equation}\label{Bound:s}
\mu^n |\partial_\mu^n s(t,\mu)|\le 2^n\, n!\, s\left(t,\frac{\mu}{2}\right), \text{ \ for all \ } n\in\mathbb{N}_0.
\end{equation}
\end{lemma}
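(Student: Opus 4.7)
The plan is to read off everything directly from the propagation-function representation \eqref{Relax:Propag}, which gives
\[
s(t,\mu) = -\int_0^\infty e^{-\tau\mu}\, w(t,d\tau), \qquad \mu\ge 0.
\]
Since $w(t,\cdot)$ is non-increasing and right-continuous on $\mathbb{R}_+$ with $w(t,0)=1$ and $w(t,\infty)=0$, the Lebesgue--Stieltjes measure $d\nu_t(\tau) := -w(t,d\tau)$ is a \emph{positive} Borel measure on $[0,\infty)$ of total mass $1$. Thus $s(t,\mu)$ is, for each fixed $t>0$, the Laplace transform of the probability measure $\nu_t$. This single observation carries essentially all the content of the lemma.

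From here the smoothness and complete monotonicity follow by differentiation under the integral sign. For any $\mu_0>0$ and $n\in\mathbb{N}_0$ the integrand $\tau^n e^{-\tau\mu}$ is dominated, uniformly for $\mu\ge\mu_0/2$, by the $\nu_t$-integrable function $\tau^n e^{-\tau\mu_0/2}$ (integrability is automatic because $\sup_{\tau\ge 0}\tau^n e^{-\tau\mu_0/4}<\infty$ and $\nu_t$ is finite). Dominated convergence then justifies
\[
\partial_\mu^n s(t,\mu) = \int_0^\infty (-\tau)^n e^{-\tau\mu}\, d\nu_t(\tau), \qquad \mu>0,
\]
so $s(t,\cdot)\in C^\infty(\mathbb{R}_+)$ and
\[
(-1)^n\partial_\mu^n s(t,\mu) = \int_0^\infty \tau^n e^{-\tau\mu}\, d\nu_t(\tau) \ge 0,
\]
which is \eqref{CM:s}.

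For \eqref{Bound:s} the idea is to split the exponential $e^{-\tau\mu}=e^{-\tau\mu/2}\cdot e^{-\tau\mu/2}$ and absorb the polynomial factor into one of them. The elementary inequality $y^n\le n!\,e^y$ (obtained by keeping only the $n$-th term in the series for $e^y$, valid for $y\ge 0$) applied to $y=\tau\mu/2$ gives
\[
(\mu\tau)^n e^{-\tau\mu/2} = 2^n (\tau\mu/2)^n e^{-\tau\mu/2} \le 2^n\, n!.
\]
Inserting this into the representation of $\partial_\mu^n s(t,\mu)$ yields
\[
\mu^n|\partial_\mu^n s(t,\mu)| = \int_0^\infty (\mu\tau)^n e^{-\tau\mu/2}\, e^{-\tau\mu/2}\, d\nu_t(\tau) \le 2^n n! \int_0^\infty e^{-\tau\mu/2}\, d\nu_t(\tau) = 2^n n!\, s(t,\mu/2),
\]
which is precisely \eqref{Bound:s}.

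I do not expect a real obstacle here: once the Bernstein/propagation-function identity \eqref{Relax:Propag} is in hand, both claims reduce to properties of Laplace transforms of positive measures plus the one-line estimate $y^n e^{-y}\le n!$. The only mildly delicate point is the justification of differentiation under the integral at the endpoint $\mu=0$ (where the integrals need not converge for $n\ge 1$); this is harmless because the statement is about $\mathbb{R}_+=(0,\infty)$ in the sense of smoothness, and the pointwise bounds \eqref{CM:s}--\eqref{Bound:s} at $\mu=0$ (if one wishes to include it) hold trivially or by monotone convergence.
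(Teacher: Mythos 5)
Your proof is correct, but it takes a genuinely different route from the one the paper relies on. The paper defers to \cite[Lemmas 5.1 and 5.2]{Kem-Sil-Ver-Zach-2016}, whose argument is the exact analogue of the proof this paper gives for $r(t,\mu)$ in Lemma \ref{Lem:aux3}: one differentiates the scalar Volterra equation \eqref{Func:s} $n$ times in $\mu$ to get a recursion, identifies $\partial_\mu^n s_\mu=(-1)^n n!\,(r_\mu^{*n}\ast s_\mu)$ via Laplace transforms, reads off \eqref{CM:s} from the nonnegativity of $r_\mu$ and $s_\mu$, and then obtains \eqref{Bound:s} from Taylor's theorem, using that every term of the expansion of $s(t,\mu/2)$ around $\mu$ is nonnegative. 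You instead use the propagation-function representation \eqref{Relax:Propag}, under which $s(t,\cdot)$ is the Laplace transform of the probability measure $-w(t,d\tau)$; complete monotonicity is then immediate, and \eqref{Bound:s} drops out of the elementary inequality $y^n e^{-y}\le n!$ applied to $y=\tau\mu/2$. Your route is shorter and makes the constant $2^n n!$ transparent, but it leans on the Bernstein/subordination machinery of Section 3, whereas the convolution route works purely at the level of the scalar Volterra equations and transfers verbatim to $r(t,\mu)$, for which no such clean subordination formula is available in the paper. One small caveat: since the paper sets $\mathbb{R}_+=[0,\infty)$, smoothness at $\mu=0$ is nominally part of the claim, and your dominated-convergence argument only covers $\mu>0$ (a priori the moments $\int_0^\infty \tau^n\,d(-w(t,\cdot))$ need not be finite); this is harmless for all applications and is easily patched, e.g.\ by observing that the Neumann series for \eqref{Func:s} shows $\mu\mapsto s(t,\mu)$ is entire.
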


\begin{lemma}\label{Lem:aux2}
Let $(k,\ell)\in(\mathcal{PC})$ and $t\ge 0$ be fixed. Let $\delta\in(0,1]$ and set $\psi_\delta(\mu)=\mu^\delta s(t,\mu)$, for  $\mu>0$. Then $\psi_\delta\in C^{\infty}((0,\infty))$ and for every $n\in \mathbb{N}$ there is a constant $C(n)$ such that 
\[
\mu^\delta|\psi_\delta^{(n)}(\mu)|[(1\ast\ell)(t)]^{\delta}\le C(n).
\]
\end{lemma}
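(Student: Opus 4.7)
The plan is to deduce the estimate from the pointwise bounds on the derivatives $\partial_\mu^n s(t,\mu)$ provided by Lemma \ref{Lem:aux1}, combined with Leibniz's rule and the upper bound on $s$ from (\ref{Est:s}). The $C^\infty$ regularity of $\psi_\delta$ on $(0,\infty)$ is immediate: $\mu\mapsto\mu^\delta$ is smooth on the open half-line, and Lemma \ref{Lem:aux1} gives $s(t,\cdot)\in C^\infty(\mathbb{R}_+)$, so the product is smooth on $(0,\infty)$.

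For the quantitative bound I would apply Leibniz's rule together with the explicit identity $(\mu^\delta)^{(k)}=\delta(\delta-1)\cdots(\delta-k+1)\mu^{\delta-k}$ to write
\[
\psi_\delta^{(n)}(\mu)=\mu^\delta\sum_{k=0}^{n}\binom{n}{k}\,\delta(\delta-1)\cdots(\delta-k+1)\,\mu^{-k}\,\partial_\mu^{\,n-k}s(t,\mu).
\]
Multiplying by $\mu^n$, the factor $\mu^{-k}$ combines with $\mu^n$ to produce $\mu^{n-k}|\partial_\mu^{n-k}s(t,\mu)|$, which is controlled by $2^{n-k}(n-k)!\,s(t,\mu/2)$ thanks to (\ref{Bound:s}). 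Summing the finitely many terms delivers a bound of the form $\mu^n|\psi_\delta^{(n)}(\mu)|\le \tilde C(n)\,\mu^\delta\,s(t,\mu/2)$, with $\tilde C(n)$ depending only on $n$ and $\delta$.

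The final step is to balance the growth of $\mu^\delta$ against the factor $[(1\ast\ell)(t)]^\delta$ by invoking the upper estimate $s(t,\mu/2)\le (1+\tfrac{\mu}{2}(1\ast\ell)(t))^{-1}$ from (\ref{Est:s}). Introducing the auxiliary variable $x=\tfrac{\mu}{2}(1\ast\ell)(t)\ge 0$, the combined right-hand side reduces to $2^\delta \tilde C(n)\,x^\delta/(1+x)$. The elementary inequality $x^\delta\le 1+x$, valid for every $\delta\in(0,1]$ and every $x\ge 0$ (check separately for $x\le 1$ and $x>1$), then gives $x^\delta/(1+x)\le 1$ and yields the uniform constant $C(n)$.

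The subtlest point I anticipate is precisely this last balancing step: the restriction $\delta\le 1$ is essential, since for $\delta>1$ the function $x^\delta/(1+x)$ is unbounded as $x\to\infty$ and the desired compensation between $\mu^\delta$ and $(1\ast\ell)(t)^\delta$ would fail. The negative powers $\mu^{-k}$ produced by Leibniz's rule cause no difficulty because the claim is only asserted on the open half-line $\mu>0$; moreover, the whole argument depends on the decay of $s(t,\mu/2)$ in the product $\mu(1\ast\ell)(t)$, so both the choice $\mu/2$ in Lemma \ref{Lem:aux1} and the lower bound structure of (\ref{Est:s}) are used in an essential way.
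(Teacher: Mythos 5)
Your proof is correct and follows the standard route (Leibniz's rule, the derivative bound \eqref{Bound:s}, and the upper estimate in \eqref{Est:s}, with the elementary inequality $x^\delta\le 1+x$ doing the final balancing), which is exactly the argument of the reference the paper cites for this lemma rather than reproducing. One remark: what you actually establish is $\mu^{n}|\psi_\delta^{(n)}(\mu)|[(1\ast\ell)(t)]^{\delta}\le C(n)$, with $\mu^{n}$ in place of the $\mu^{\delta}$ printed in the statement; this is the correct reading, since the printed version fails as $\mu\to 0$ (already for $n=1$ and $\delta<1/2$ the term $\delta\mu^{2\delta-1}s(t,\mu)$ is unbounded), and the $\mu^{n}$ form is precisely what is invoked in the proof of Lemma \ref{Lem:aux5}.
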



Next, we obtain analogues results for the mapping $\mu\mapsto r(t,\mu)$.

\begin{lemma}\label{Lem:aux3}
Assume that $(k,\ell)\in (\mathcal{PC})$. Let $t\ge 0$. Then the map $\mu\mapsto r(t,\mu)$ belongs to $C^{\infty}(\mathbb{R}_+)$ and  
\begin{equation}\label{CM:r}
(-1)^{n}\partial_\mu^n r(t,\mu)\ge 0, \text{ \ for all \ } n\in\mathbb{N}_0, 
\end{equation}
Moreover, we have that 
\begin{equation}\label{Bound:r}
\mu^n |\partial_\mu^n r(t,\mu)|\le 2^n\, n!\, r\left(t,\frac{\mu}{2}\right), \text{ \ for all \ } n\in\mathbb{N}_0.
\end{equation}

\end{lemma}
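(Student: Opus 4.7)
My plan is to mirror the proof of Lemma \ref{Lem:aux1}: the essential ingredient will be a Bernstein-type representation of $\mu \mapsto r(t,\mu)$ as the Laplace transform (in $\mu$) of a non-negative measure, after which both assertions of Lemma \ref{Lem:aux3} reduce to routine differentiation under the integral and the elementary sup estimate $x^n e^{-x/2} \le (2n/e)^n \le 2^n n!$. To produce such a representation, I first observe that differentiating \eqref{1*r=1-s} in $t$ gives $\mu\, r(t,\mu) = -\partial_t s(t,\mu)$ a.e., since $s_\mu \in H^1_{1,\,\mathrm{loc}}$. Combining with \eqref{Relax:Propag} and integrating by parts in the $\tau$-variable (using $w(t,0+) = 1$ and $w(t,\infty) = 0$ from \cite[Proposition 4.9]{Pru-1993}) rewrites $s(t,\mu) = 1 - \mu \int_0^\infty e^{-\mu\tau}\, w(t,\tau)\, d\tau$; differentiating in $t$ and dividing by $\mu$ then yields
\[
r(t,\mu) = \int_0^\infty e^{-\mu\tau}\, \partial_t w(t,\tau)\, d\tau,
\]
where $\partial_t w(t,\tau) \ge 0$ because $w(\cdot,\tau)$ is the distribution function of the subordinator associated with the Bernstein function $\varphi(\lambda) = \lambda \widehat{k}(\lambda)$. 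Equivalently, the same representation follows from the explicit Laplace identity $\widehat{r_\mu}(\lambda) = 1/(\varphi(\lambda)+\mu) = \int_0^\infty e^{-\mu\tau} e^{-\tau\varphi(\lambda)}\,d\tau$ and inversion in $t$.

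With this representation in hand both conclusions are straightforward. Differentiation under the integral sign delivers $r(t,\cdot)\in C^\infty(\mathbb{R}_+)$ together with
\[
(-1)^n \partial_\mu^n r(t,\mu) = \int_0^\infty \tau^n e^{-\mu\tau}\, \partial_t w(t,\tau)\, d\tau \ge 0,
\]
which is \eqref{CM:r}. For \eqref{Bound:r} I split $(\mu\tau)^n e^{-\mu\tau} = (\mu\tau)^n e^{-\mu\tau/2}\cdot e^{-\mu\tau/2}$ and use $\sup_{x\ge 0} x^n e^{-x/2} = (2n/e)^n \le 2^n n!$ (the latter from the Stirling lower bound $n! \ge (n/e)^n$) to obtain
\[
\mu^n |\partial_\mu^n r(t,\mu)| \le \left(\frac{2n}{e}\right)^{\!n}\! \int_0^\infty e^{-\mu\tau/2}\, \partial_t w(t,\tau)\, d\tau = \left(\frac{2n}{e}\right)^{\!n} r(t,\mu/2) \le 2^n n!\, r(t,\mu/2).
\]

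The main technical obstacle is rigorously justifying the integral representation, in particular the interchange of $\partial_t$ with the integral and the meaning of $\partial_t w(t,\tau)$, which relies on the structural properties of the propagation function from \cite[Proposition 4.9]{Pru-1993}. A clean alternative, should this prove delicate, is to establish \eqref{CM:r} first by induction on $n$ from the Volterra equation obtained by $n$-fold $\mu$-differentiation of \eqref{Func:r},
\[
v_n + \mu\,\ell\ast v_n = n\,\ell\ast v_{n-1}, \qquad v_n := (-1)^n\partial_\mu^n r,
\]
and then invoke Bernstein's theorem to produce a non-negative Radon measure $m_t$ on $[0,\infty)$ with $r(t,\mu) = \int_0^\infty e^{-\mu\tau}\, m_t(d\tau)$; the bound \eqref{Bound:r} then follows verbatim with $m_t(d\tau)$ in place of $\partial_t w(t,\tau)\,d\tau$.
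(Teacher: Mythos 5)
Your proposal is correct, but it follows a genuinely different route from the paper's. The paper proves \eqref{CM:r} by differentiating the Volterra equation \eqref{Func:r} $n$ times in $\mu$ and showing, via Laplace transforms and induction, the explicit identity $\partial_\mu^n r_\mu=(-1)^n\,n!\,r_\mu^{*(n+1)}$ (convolution powers of the nonnegative kernel $r_\mu$); it then obtains \eqref{Bound:r} \emph{without any integral representation}, by expanding $r(t,\mu/2)$ in a Taylor series about $\mu$ and observing that complete monotonicity makes every summand nonnegative, so each single term is dominated by $r(t,\mu/2)$. Your fallback induction $v_n+\mu\,\ell\ast v_n=n\,\ell\ast v_{n-1}$ is essentially the paper's first step (indeed $\widehat{v_n}=n\widehat{r_\mu}\widehat{v_{n-1}}$ gives exactly the convolution-power formula), so for \eqref{CM:r} the two arguments coincide in substance. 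The real divergence is in \eqref{Bound:r}: your Bernstein representation $r(t,\mu)=\int_0^\infty e^{-\mu\tau}m_t(d\tau)$ plus $\sup_{x\ge0}x^ne^{-x/2}=(2n/e)^n\le 2^n n!$ is sound (and even yields the slightly sharper constant $(2n/e)^n$), and it connects the lemma nicely to the subordination structure of Section 3; but it requires invoking Bernstein's theorem and justifying differentiation under the integral, whereas the paper's Taylor argument needs only \eqref{CM:r} itself and is therefore more economical. You are right to flag your primary derivation via $\partial_t w(t,\tau)$ as delicate: absolute continuity of $w(\cdot,\tau)$ is not guaranteed, so the clean path is exactly your alternative (complete monotonicity first, then Bernstein); with that fallback the proof is complete.
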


\begin{proof}  We recall that the function $r_\mu$ is the solution of the equation 
\[
r_\mu + \mu(\ell*r_\mu)=\ell.
\]
Since $\mu$ merely appears as coefficient in front of the second term, it is clear that the
dependence of the solution $r_\mu(t)$ on the parameter $\mu$ is $C^{\infty}$. A direct computation shows that differentiating $n$-times with respect to $\mu$, we obtain the following recursive equation
\begin{equation}\label{Eq:Rec:rn}
\partial^n_\mu r_\mu +\mu(\ell*\partial^n_\mu r_\mu)=-n\,(\ell*\partial^{n-1}_\mu r_\mu),\ \text{ for all } n\in\mathbb{N}.
\end{equation}

In order to prove the property \eqref{CM:r}, we note that 
\begin{equation}\label{Eq:No:Rec:rn}
\partial_\mu^n r_\mu=(-1)^{n}n!\, r_\mu^{*(n+1)}, \text{ for all\ } n\in \mathbb{N}_0,
\end{equation}

\noindent where $r_\mu^{*(n+1)}=r_\mu\ * \ r_\mu^{(n)}$, and $r_\mu^{*(1)}=r_\mu$. The equality \eqref{Eq:No:Rec:rn} follows from an inductive argument. Indeed, considering $n=1$ in \eqref{Eq:Rec:rn}, we have that 
\[
\partial_\mu r_\mu+\mu(\ell*\partial_\mu r_\mu)=-(\ell*r_{\mu}).
\]

Taking Laplace transform into both sides of the preceding equation we obtain that 
\[
\widehat{\partial_\mu r_\mu}=-\dfrac{\widehat{\ell}\cdot\widehat{r_\mu}}{1+\mu\widehat{\ell}}=-\widehat{r_\mu}\cdot\widehat{r_\mu},
\]
\noindent by (\ref{Lap:s:r}). This implies that $\partial_\mu r_\mu=-r_\mu*r_\mu$, and \eqref{Eq:No:Rec:rn} is valid for $n=1$. Now, assuming that \eqref{Eq:No:Rec:rn} is true for $n\in\mathbb{N}$ and using Laplace transform, we have 
\[
\widehat{\partial^{n}_\mu r_\mu}=(-1)^{n}\, n!\, \widehat{r_\mu}^{n+1}.
\] 
Using the Laplace transform into both sides of \eqref{Eq:Rec:rn}, we have 
\[
\widehat{\partial^{n+1}_\mu r_\mu}=-\dfrac{(n+1)\cdot \widehat{\ell}\ \widehat{\partial_\mu^n r_\mu}}{1+\mu\widehat{\ell}}=(-1)^{n+1}\ (n+1)!\ (\widehat{r_\mu})^{n+1},
\]
Therefore $\partial_\mu^{n+1} r_\mu=(-1)^{n+1}\, (n+1)!\, r_\mu^{*(n+2)}$, and the induction is finished. Since $r_\mu$ is a non-negative function, it follows from \eqref{Eq:No:Rec:rn} that $r_\mu$ satisfies \eqref{CM:r}.

On the other hand, applying the Taylor's theorem to the map $\mu\mapsto r(t,\mu)$, we obtain 
\begin{equation}\label{Taylor}
r\left(t,\dfrac{\mu}{2}\right)=\sum_{j=0}^n \frac{\partial^{j}_\mu r_\mu(t,\mu)}{j!}\left(-\dfrac{\mu}{2}\right)^{j}+ \dfrac{\partial_\mu^{n+1}r_\mu(t,\eta)}{(n+1)!}\left(-\dfrac{\mu}{2}\right)^{n+1}, \text{ for some } \eta\in\left(\frac{\mu}{2},\mu\right). 
\end{equation}
 
Since $r_\mu(t)$ satisfies \eqref{CM:r}, it follows that every summand of the right hand side of \eqref{Taylor} is non-negative and this implies that 
\[
r\left(t,\frac{\mu}{2}\right)\ge \dfrac{\partial_\mu^j r(t,\mu)}{j!}\left(-\dfrac{\mu}{2}\right)^j, \text{ for all } j<n.
\]
Since $n$ can be chosen arbitrarily, this implies that 
\[
\mu^n| \partial_\mu^n r(t,\mu)|\le n!\ 2^n\  r\left(t,\frac{\mu}{2}\right), \text{ for all } n\in\mathbb{N} \text{ and } \mu\ge0.
\] 
\end{proof}



\medbreak

\begin{lemma} \label{Lem:aux4}
Let $(k,\ell)\in (\mathcal{PC})$ and $t\ge 0$ be fixed. Let $\delta\in[0,1]$ and set $\phi_\delta(\mu)=\mu^\delta r(t,\mu)$ for $\mu> 0$. Then $\phi_\delta\in C^{\infty}((0,\infty))$ and for every $n\in \mathbb{N}$, there exists a constant $C(n)>0$ such that
\begin{equation}\label{phi:delta}
\mu^n | \phi_\delta^{(n)}(\mu)|[(1\ast \ell)(t)]^{\delta}\ell(t)^{-1}\ \le C(n),\quad \mu> 0.
\end{equation}

\end{lemma}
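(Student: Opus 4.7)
The approach follows the template of Lemma \ref{Lem:aux2}: the Leibniz rule combined with Lemma \ref{Lem:aux3} will reduce matters to a pointwise estimate of $\mu^\delta r(t,\mu/2)$. Since $\mu\mapsto r(t,\mu)\in C^{\infty}(\mathbb{R}_+)$ by Lemma \ref{Lem:aux3}, $\phi_\delta\in C^{\infty}((0,\infty))$ as well. For $\delta\in[0,1]$ and $j\in\mathbb{N}_0$ one has $|(\mu^\delta)^{(j)}|\le j!\,\mu^{\delta-j}$, so Leibniz's formula together with \eqref{Bound:r} yields
\[
\mu^n|\phi_\delta^{(n)}(\mu)|\le \mu^\delta \sum_{j=0}^{n}\binom{n}{j}\, j!\,\bigl(\mu^{n-j}|\partial_\mu^{n-j}r(t,\mu)|\bigr) \le C_0(n)\,\mu^\delta\, r(t,\mu/2),
\]
with $C_0(n):=n!(2^{n+1}-1)$ independent of both $\mu$ and $\delta\in[0,1]$.

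The crucial new ingredient is the pointwise comparison
\[
r_\mu(t)\le \ell(t)\,s_\mu(t),\quad t>0,\ \mu\ge 0.
\]
Using that $\ell$ is nonincreasing, $\ell(t-\sigma)\ge \ell(t)$ for every $\sigma\in[0,t]$; combining this monotonicity with \eqref{1*r=1-s} gives
\[
\mu(\ell\ast r_\mu)(t)\ge \mu\ell(t)(1\ast r_\mu)(t)=\ell(t)(1-s_\mu(t)),
\]
and substituting into \eqref{Func:r} yields $r_\mu(t)\le \ell(t)(1-(1-s_\mu(t)))=\ell(t) s_\mu(t)$, as claimed. Plugging this into the upper bound from \eqref{Est:s} (with $\mu/2$ in place of $\mu$) produces
\[
\mu^\delta r(t,\mu/2)\le \frac{2\mu^\delta\ell(t)}{2+\mu(1\ast\ell)(t)},
\]
and multiplying by $(1\ast\ell)(t)^\delta\ell(t)^{-1}$ and setting $x:=\mu(1\ast\ell)(t)/2$ converts the right-hand side into $2^\delta x^\delta/(1+x)$, which is bounded by $2\max_{x\ge 0} x^\delta/(1+x)=2\delta^\delta(1-\delta)^{1-\delta}\le 2$ uniformly for $\delta\in[0,1]$. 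Coupling this with the Leibniz estimate yields \eqref{phi:delta} with $C(n):=2C_0(n)$.

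The only real difficulty is the pointwise comparison $r_\mu\le \ell\,s_\mu$; everything else is formal bookkeeping. This inequality relies on the monotonicity of $\ell$, which is not literally required by the hypothesis $(k,\ell)\in(\mathcal{PC})$ but is the standing assumption already used in Section \ref{relaxation} (notably when invoking \cite{Cle-Noh-1979}) and is met by every example treated in the paper.
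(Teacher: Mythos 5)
Your argument is correct and follows essentially the same route as the paper: Leibniz's rule plus the bound \eqref{Bound:r} reduces everything to estimating $\mu^\delta r(t,\mu/2)$, and your ``crucial'' inequality $r_\mu(t)\le \ell(t)s_\mu(t)$, once combined with \eqref{Est:s}, is just a repackaging of the paper's key estimate $r_\mu(t)\le \ell(t)/(1+\mu(1\ast\ell)(t))$, which the paper derives by the same use of the monotonicity of $\ell$ via $(\ell\ast r_\mu)(t)\ge \ell(t)(1\ast r_\mu)(t)$ and \eqref{1*r=1-s}. Your closing remark that the monotonicity of $\ell$ is an implicit standing assumption not literally contained in $(\mathcal{PC})$ is a fair observation that applies equally to the paper's own proof.
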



\begin{proof} We begin noting that \eqref{1*r=1-s} together with the upper estimate of $s_\mu$ given by \eqref{Est:s} imply that 
\[
1-\dfrac{1}{1+\mu(1\ast\ell)(t)}\le \mu(1\ast r_\mu)(t),
\]
which in turns implies that  
\[
\dfrac{(1\ast\ell)(t)}{1+\mu(1\ast\ell)(t)}\le (1\ast r_\mu)(t), \quad t>0,\  \mu\ge 0.
\]
Further, since $\ell$ is nonincreasing we have that 
\[
\ell(t)\ge r_\mu(t)+\mu\ell(t)(1\ast r_\mu)(t) \ge r_\mu(t)+\mu\ell(t)\dfrac{(1\ast\ell)(t)}{1+\mu (1\ast \ell)(t)}.
\]
Hence, we have 
\begin{equation}\label{r:up:est}
r_\mu(t)\le \dfrac{\ell(t)}{1+\mu(1\ast\ell)(t)}, \quad t>0,\  \mu\ge 0.
\end{equation}

On the other hand, by Leibniz's rule for  differentiation of a product of two functions we have that  
\[
\mu^n \phi_\delta^{(n)}(\mu)=\sum_{j=0}^n\binom{n}{j} \bigl(\mu^j\partial_\mu^j r(t,\mu)\bigr)\cdot \bigl(\mu^{n-j}\partial_\mu^{n-j}(\mu^{\delta})\bigr). 
\]
It follows from \eqref{Bound:r} that there exists a constant $\tilde{C}(n)$ such that
\begin{align*}
\mu^n |\phi_\delta^{(n)}(\mu)|&\le\tilde{C}(n)\mu^\delta \sum_{k=0}^n\binom{n}{k}  2^k k!\  r\left(t,\frac{\mu}{2}\right).
\end{align*} 
 
Consequently, there is a constant $C(n)$ such that
\begin{align*}
\mu^n|  \phi_\delta^{(n)}(\mu)|(1\ast\ell)(t)^\delta[\ell(t)]^{-1}&\le C(n)\  r\left(t,\frac{\mu}{2}\right)(1\ast\ell)(t)^\delta[\ell(t)]^{-1}\\
&\le C(n) \dfrac{\bigl(\mu(1\ast\ell)(t)\bigr)^\delta}{1+\mu(1*\ell)(t)}, \quad t>0,
\end{align*}
by \eqref{r:up:est}.  Since $\delta\in[0,1]$ we have the inequality

\[
\dfrac{\bigl(\mu(1\ast \ell)(t)\bigr)^\delta}{1+\mu(1\ast \ell)(t)}\le \dfrac{\bigl(\mu(1\ast \ell)(t)\bigr)^\delta}{\bigl(1+\mu(1\ast \ell)(t)\bigr)^\delta}<1,\quad \mu\ge0.
\]

In consequence, \eqref{phi:delta} is true, and the proof is complete. \end{proof}


\medskip

\subsection*{Fourier Multipliers} Let $m(\xi)$ be a complex-valued bounded function on $\mathbb{R}^d\smallsetminus \{0\}$. We say that $m(\xi)$ satisfies the Mihlin's condition if there exists $M=M(d)>0$ such that
\begin{equation}\label{Mihlin:multi}
|\xi|^{|\beta|} \big|\partial_\xi^\beta m(\xi)\big|\le M,\quad \xi\in \mathbb{R}^d\smallsetminus\{0\},\;\;|\beta|\le \left[\frac{d}{2}\right]+1,
\end{equation}
where $\beta=(\beta_1,\ldots,\beta_d)\in \mathbb{N}_0^d$ is a multiindex and $\partial_{\xi}^\beta m$ stands for the partial derivative of $m$ of order $|\beta|=\sum_{k=1}^d \beta_k$.

\begin{lemma} \label{Lem:aux5}
Let $(k,\ell)\in (\mathcal{PC})$, $\delta\in (0,1]$, $\rho >0$, and $t\ge 0$ be fixed. Let 
\[
m_0(\xi) :=\psi_\delta(|\xi|^\rho)=|\xi|^{\rho\delta} s(t,|\xi|^\rho),\quad \xi\in\mathbb{R}^d.
\]
Then $m_0\in C^\infty((0,\infty)^d)$ and the partial derivative $\partial_{\xi}^\beta m_0$ of order $|\beta|=\sum_{k=1}^N \beta_k$ is a sum of finitely many terms of the forms
\begin{equation} \label{summandform:1}
c_i(\rho,\beta)\cdot |\xi|^{\rho i}\psi_\delta^{(i)}\left(|\xi|^\rho\right)\cdot |\xi|^{-2j}\prod_{k=1}^d \xi_k^{\gamma_k}\quad
\mbox{with}\;\; i=0,\ldots, j, \; \mbox{and } \;\frac{|\beta|}{2}
\le j \le |\beta|,
\end{equation}
where $\sum_{k=1}^d \gamma_k=2j-|\beta|$ and the constants $c_i(\rho,\beta)$ could be but not all equal to zero. 

Moreover, the function $m(\xi):=m_0(\xi) [(1\ast \ell)(t)]^\delta$ is uniformly bounded w.r.t.\ $t\ge 0$ and satisfies the Mihlin's condition \eqref{Mihlin:multi}.
\end{lemma}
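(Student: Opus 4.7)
The plan is to prove the three assertions in order: smoothness, the structural formula for $\partial_\xi^\beta m_0$, and then the Mihlin bound (together with the uniform boundedness of $m$). Smoothness of $m_0$ on $(0,\infty)^d$ is immediate: on this set $\xi\mapsto |\xi|^\rho$ is $C^\infty$ (it equals $(\sum_k \xi_k^2)^{\rho/2}$ away from the origin), and by Lemma \ref{Lem:aux2} the function $\psi_\delta$ lies in $C^\infty((0,\infty))$. Hence $m_0=\psi_\delta\circ(|\cdot|^\rho)$ is smooth on $(0,\infty)^d$.

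For the structural formula I would argue by induction on $|\beta|$. The case $|\beta|=0$ is the term with $i=j=0$ and $\gamma_k=0$. For the inductive step, apply $\partial_{\xi_\ell}$ to a generic summand
\[
c_i(\rho,\beta)\,|\xi|^{\rho i}\psi_\delta^{(i)}(|\xi|^\rho)\,|\xi|^{-2j}\prod_{k=1}^d \xi_k^{\gamma_k}.
\]
Using $\partial_{\xi_\ell}|\xi|^{\rho i}=\rho i\,|\xi|^{\rho i-2}\xi_\ell$, $\partial_{\xi_\ell}\psi_\delta^{(i)}(|\xi|^\rho)=\rho\,|\xi|^{\rho-2}\xi_\ell\,\psi_\delta^{(i+1)}(|\xi|^\rho)$, $\partial_{\xi_\ell}|\xi|^{-2j}=-2j\,|\xi|^{-2j-2}\xi_\ell$, the product rule produces four terms. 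Two of them raise $j$ by one and $\gamma_\ell$ by one (with $i$ possibly raised by one); the remaining term lowers $\gamma_\ell$ by one and leaves $j,i$ fixed. In every case one checks $\sum_k\gamma_k'=2j'-(|\beta|+1)$, $0\le i'\le j'$, and $(|\beta|+1)/2\le j'\le |\beta|+1$ (the last term only appears when some $\gamma_\ell\ge 1$, which forces $2j-|\beta|\ge 1$, i.e.\ $j\ge (|\beta|+1)/2$). This closes the induction.

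For the Mihlin bound, the key preliminary is that Lemma \ref{Lem:aux2} (combined with Lemma \ref{Lem:aux1} via Leibniz applied to $\psi_\delta(\mu)=\mu^\delta s(t,\mu)$) yields
\[
\mu^{n}\bigl|\psi_\delta^{(n)}(\mu)\bigr|\,[(1\ast\ell)(t)]^{\delta}\le C(n),\qquad \mu>0,\; n\in\mathbb{N}_0.
\]
Then for any summand in \eqref{summandform:1}, using $\prod_k |\xi_k|^{\gamma_k}\le |\xi|^{\sum_k\gamma_k}=|\xi|^{2j-|\beta|}$ and $|\xi|^{\rho i}=(|\xi|^\rho)^i$, I obtain
\[
|\xi|^{|\beta|}\,\bigl|c_i(\rho,\beta)\,|\xi|^{\rho i}\psi_\delta^{(i)}(|\xi|^\rho)\,|\xi|^{-2j}\prod_{k}\xi_k^{\gamma_k}\bigr|
\le |c_i(\rho,\beta)|\,(|\xi|^\rho)^i\bigl|\psi_\delta^{(i)}(|\xi|^\rho)\bigr|.
\]
Multiplying by $[(1\ast\ell)(t)]^\delta$ and applying the bound above with $\mu=|\xi|^\rho$, $n=i$, each summand is controlled by $|c_i(\rho,\beta)|\,C(i)$. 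Summing the finitely many summands gives $|\xi|^{|\beta|}|\partial_\xi^\beta m(\xi)|\le M$ uniformly in $t\ge 0$ and $\xi\ne 0$, which is \eqref{Mihlin:multi}. Finally, the uniform bound on $m$ itself follows from \eqref{Est:s}: with $\mu=|\xi|^\rho$ and $x=\mu(1\ast\ell)(t)$,
\[
m(\xi)=\mu^\delta s(t,\mu)[(1\ast\ell)(t)]^\delta\le \frac{x^\delta}{1+x}\le 1,\qquad \delta\in(0,1].
\]
The main obstacle is the bookkeeping in the induction, specifically verifying that all four terms produced by the product rule fit the prescribed shape \eqref{summandform:1} with the correct index ranges; once that is in place, Lemma \ref{Lem:aux2} does the analytic work.
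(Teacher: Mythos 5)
Your proposal is correct and follows essentially the same route as the paper: induction on $|\beta|$ with the same bookkeeping for the indices $i,j,\gamma_k$ (the paper merely groups the product-rule terms into three rather than four by combining the derivatives of $|\xi|^{\rho i}$ and $|\xi|^{-2j}$), followed by the bound $\mu^{i}|\psi_\delta^{(i)}(\mu)|[(1\ast\ell)(t)]^\delta\le C(i)$ applied at $\mu=|\xi|^\rho$ to each summand, exactly as in the paper's final estimate. Your explicit derivation of that derivative bound from Lemmas \ref{Lem:aux1} and \ref{Lem:aux2}, and the direct $x^\delta/(1+x)\le 1$ argument for the $i=0$ case, are harmless additions.
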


\begin{proof} The assertion on the structure of $\partial_{\xi}^\beta m_0$ can be proved by induction over $|\beta|$.
If $|\beta|=0$, then $\partial_{\xi}^\beta m_0(\xi)=m_0(\xi)=\psi_{\delta}(|\xi|^\rho)$, which is of the desired form with $j=0$.
Suppose now that the assertion is true for all $\beta\in\mathbb{N}_0^d$ of the same fixed order $b:=|\beta|\in \mathbb{N}_0$. Let
$\beta'\in \mathbb{N}_0^d$ with $|\beta'|=b+1$. Then $\partial_{\xi}^{\beta'} m_0=\partial_{\xi_l}\partial_{\xi}^\beta m_0$
for some $\beta\in \mathbb{N}_0^d$ with $|\beta|=b$ and some $l\in \{1,\ldots,d\}$. By the induction hypothesis, $\partial_{\xi_l}\partial_{\xi}^\beta m_0$ is a finite sum of first order partial derivatives w.r.t.\ $\xi_l$ of terms of the
form described in (\ref{summandform:1}). Let us consider such a term. Let $\gamma_l\geq 0$, we have
\begin{align}
\partial_{\xi_l} &\left[|\xi|^{\rho i}\psi_\delta^{(i)}\left(|\xi|^\rho\right) \cdot |\xi|^{-2j}\prod_{k=1}^d \xi_k^{\gamma_k}\right]  =  \rho\left( |\xi|^{\rho (i+1)} \psi_\delta^{(i+1)}\left(|\xi|^{\rho}\right)\cdot |\xi|^{-2(j+1)}  \xi_l \cdot \prod_{k=1}^d \xi_k^{\gamma_k}\right)\nonumber\\
&  + (\rho i - 2j) \left(|\xi|^{\rho i} \psi_\delta^{(i)}\left(|\xi|^{\rho}\right)\cdot |\xi|^{-2(j+1)}  \xi_l \cdot \prod_{k=1}^d \xi_k^{\gamma_k}\right)\nonumber\\
& + \gamma_l\left(|\xi|^{\rho i} \psi_\delta^{(i)}\left(|\xi|^{\rho}\right)\cdot |\xi|^{-2j}  \xi_l^{\gamma_l -1} \cdot \prod_{k=1, k\neq l}^d \xi_k^{\gamma_k}\right),\quad i=0,\ldots, j.\label{partial:1}
\end{align}
We see that the constants on the right-hand side of \eqref{partial:1} could be zero if for instance $\rho = 2$ and $i=j$, or $\gamma_l=0$.

The first and the second term on the right-hand side of (\ref{partial:1}) has the desired form, since with $\gamma_k':=\gamma_k$, $
k\neq l$ and $\gamma_l'=\gamma_l+1$, we have by the induction hypothesis
\[
0\le \sum_{k=1}^d \gamma'_k=\sum_{k=1}^d \gamma_k+1=2j-|\beta|+1=2(j+1)-|\beta'|.
\]
The third term has the desired form as well, since setting $\gamma_k'=\gamma_k$, $
k\neq l$ and $\gamma_l'=\gamma_l-1$, we have now
\[
0\le \sum_{k=1}^d \gamma'_k=\sum_{k=1}^d \gamma_k-1=2j-|\beta|-1=2j-|\beta'|.
\]
The second part of the proof follows from the first one and Lemma \ref{Lem:aux2}. In fact, for any term $T(\xi)$ of the form (\ref{summandform:1}), Lemma \ref{Lem:aux2} yields the estimate
\begin{align*}
|\xi|^{|\beta|}|T(\xi)| & \le |c_i(\rho,\beta)|\,|\xi|^{|\beta|}\,|\xi|^{\rho i}|\psi_\delta^{(i)}|\left(|\xi|^\rho\right)\, |\xi|^{-2j}\prod_{k=1}^d |\xi_k|^{\gamma_k}\\
& \le c(\rho, \beta)\,|\xi|^{\rho i}|\psi_\delta^{(i)}\left(|\xi|^\rho\right)|\cdot |\xi|^{-2j + |\beta|}\,|\xi|^{\sum_{k=1}^d \gamma_k}\\
& \le  \,\frac{C(\rho,\beta,j)}{[(1\ast \ell)(t)]^\delta}.
\end{align*}
It is now evident that $m(\xi)$ satisfies Mihlin's condition \eqref{Mihlin:multi} with a constant $M$ that merely depends on the dimension $d$. 
\end{proof}


In case $\rho = 2$ and $m(\xi)=\psi_\delta(|\xi|^2)[(1\ast \ell)(t)]^{\delta}$ the function $m$ satisfies condition \eqref{Mihlin:multi}, and further the partial derivative of $m_0(\xi):= \psi_{\delta}(|\xi|^2)$ is a sum of finitely many terms of the forms
\[
c(\beta)\cdot \psi_\delta^{(j)}\left(|\xi|^2\right)\cdot \prod_{i=1}^d \xi_i^{\gamma_i}\quad
\mbox{with}\;\;\frac{|\beta|}{2}
\le j \le |\beta|\;\;\mbox{and}\;\;\sum_{i=1}^d \gamma_i=2j-|\beta|,
\]
where $c(\beta)>0$. See \cite[Lemma 5.3]{Kem-Sil-Ver-Zach-2016}. Our lemma has extended this result for all $\rho >0$.

\medbreak

\noindent Now we consider $m(\xi)$ as the function defined $m(\xi) :=|\xi|^{\rho\delta} r(t,|\xi|^\rho)[(1\ast \ell)(t)]^{\delta}[\ell(t)]^{-1}$ with $\xi\in \mathbb{R}^d$. Our purpose is to establish that this function is uniformly bounded w.r.t.\ $t\ge 0$, and satisfies the Mihlin's condition \eqref{Mihlin:multi}. The proof is very similar to the proof of Lemma \ref{Lem:aux5} and we omit it for the sake of the brevity of the text.


\begin{lemma}\label{Lem:aux6}
Let $(k,\ell)\in (\mathcal{PC})$, $\delta\in [0,1]$, $\rho >0$, and $t\ge 0$ be fixed. Let 
\[
m_{0}(\xi) :=\phi_\delta(|\xi|^\rho)=|\xi|^{\rho\delta} r(t,|\xi|^\rho),\quad \xi\in \mathbb{R}^d.
\]
Then $m_0\in C^\infty((0,\infty)^d)$ and the partial derivative $\partial_{\xi}^\beta m_0$ of order $|\beta|=\sum_{k=1}^N \beta_k$ is a sum of finitely many terms of the forms
\begin{equation} \label{summandform:2}
c_i(\rho,\beta)\cdot |\xi|^{\rho i}\phi_\delta^{(i)}\left(|\xi|^\rho\right)\cdot |\xi|^{-2j}\prod_{k=1}^d \xi_k^{\gamma_k}\quad
\mbox{with}\;\; i=0,\ldots, j, \; \mbox{and } \;\frac{|\beta|}{2}
\le j \le |\beta|,
\end{equation}
where $\sum_{k=1}^d \gamma_k=2j-|\beta|$ and the constants $c_i(\rho,\beta)$ could be but not all equal to zero. 

Moreover, for all $\delta\in[0,1]$, the function $m(\xi):=\phi_\delta(|\xi|^\rho)[(1\ast\ell)(t)]^{\delta}[\ell(t)]^{-1}$ is uniformly bounded w.r.t.\ $t\ge 0$ and satisfies
the Mihlin's condition (\ref{Mihlin:multi}).

\end{lemma}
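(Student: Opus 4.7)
The plan is to follow verbatim the strategy of Lemma \ref{Lem:aux5}, replacing $\psi_\delta$ by $\phi_\delta$ and invoking Lemma \ref{Lem:aux4} in place of Lemma \ref{Lem:aux2}. Indeed, the structural part of the assertion concerning $\partial_\xi^\beta m_0$ depends only on the chain rule applied to the composite $\mu \mapsto \phi_\delta(|\xi|^\rho)$, and hence is insensitive to whether the outer function is $\psi_\delta$ or $\phi_\delta$. The $C^\infty$ regularity of $m_0$ on $(0,\infty)^d$ then follows from Lemma \ref{Lem:aux3}.

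First I would prove the structural claim \eqref{summandform:2} by induction on $|\beta|$. The base case $|\beta|=0$ is immediate with $i=j=0$. For the inductive step, suppose the claim holds for all multi-indices of order $b$, and take $\beta'$ with $|\beta'|=b+1$, writing $\partial_\xi^{\beta'} = \partial_{\xi_l}\partial_\xi^\beta$ for some $\beta$ of order $b$ and some $l\in\{1,\ldots,d\}$. Differentiating a generic summand of the form \eqref{summandform:2} yields exactly the three types of contributions that appear in \eqref{partial:1} (with $\psi_\delta^{(i)}$ replaced by $\phi_\delta^{(i)}$): one proportional to $\rho$ raising both $i$ and $j$ by one, one proportional to $\rho i-2j$ leaving $i$ fixed and raising $j$ by one, and one proportional to $\gamma_l$ leaving $i,j$ fixed and decreasing $\gamma_l$ by one. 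The bookkeeping $\sum_k \gamma'_k = 2j'-|\beta'|$ is verified exactly as in Lemma \ref{Lem:aux5}.

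For the Mihlin bound, I would estimate a generic summand $T(\xi)$ of the form \eqref{summandform:2} by writing
\begin{align*}
|\xi|^{|\beta|}|T(\xi)| &\le |c_i(\rho,\beta)|\,|\xi|^{|\beta|}\,|\xi|^{\rho i}\,|\phi_\delta^{(i)}(|\xi|^\rho)|\,|\xi|^{-2j}\prod_{k=1}^d |\xi_k|^{\gamma_k}\\
&\le c(\rho,\beta)\,|\xi|^{\rho i}\,|\phi_\delta^{(i)}(|\xi|^\rho)|\,|\xi|^{-2j+|\beta|}\,|\xi|^{\sum_k \gamma_k}\\
&= c(\rho,\beta)\,\bigl(|\xi|^\rho\bigr)^{i}\,|\phi_\delta^{(i)}(|\xi|^\rho)|,
\end{align*}
since $-2j+|\beta|+\sum_k \gamma_k = 0$. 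Applying Lemma \ref{Lem:aux4} with $\mu=|\xi|^\rho$ and $n=i$ gives $(|\xi|^\rho)^i|\phi_\delta^{(i)}(|\xi|^\rho)| \le C(i)\,[(1\ast\ell)(t)]^{-\delta}\,\ell(t)$, so after multiplying by $[(1\ast\ell)(t)]^\delta [\ell(t)]^{-1}$ we obtain a bound depending only on $\rho,\beta,d$. Summing over the finitely many summands of $\partial_\xi^\beta m_0$ produces a constant $M=M(d)$ independent of $\xi\neq 0$ and $t\ge 0$, which is precisely Mihlin's condition \eqref{Mihlin:multi}, and the case $|\beta|=0$ gives the uniform boundedness of $m$.

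The only mild subtlety, compared with Lemma \ref{Lem:aux5}, is the presence of the extra factor $[\ell(t)]^{-1}$: it is exactly what Lemma \ref{Lem:aux4} delivers, so the bound remains uniform in $t\ge 0$ (and in particular does not degenerate as $\ell(t)\to 0$). No other complication arises, so the proof carries through without further modification.
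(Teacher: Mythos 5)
Your proposal is correct and is exactly the argument the paper intends: the authors omit the proof of Lemma \ref{Lem:aux6}, stating only that it is ``very similar to the proof of Lemma \ref{Lem:aux5}'', and you have carried out precisely that adaptation, with $\phi_\delta$ replacing $\psi_\delta$ and Lemma \ref{Lem:aux4} supplying the bound $(|\xi|^\rho)^i|\phi_\delta^{(i)}(|\xi|^\rho)|\le C(i)[(1\ast\ell)(t)]^{-\delta}\ell(t)$ in place of Lemma \ref{Lem:aux2}. No gaps; the only cosmetic remark is that the case $i=0$ is not literally covered by the statement of Lemma \ref{Lem:aux4} (which takes $n\in\mathbb{N}$) but follows at once from the estimate \eqref{r:up:est}, exactly as in the paper's own use of Lemma \ref{Lem:aux2} inside Lemma \ref{Lem:aux5}.
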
 


Now we are in conditions to prove our main results. 

\begin{theorem}\label{Theo:Lr:Est:u0}
Let $(k, \ell)\in (\mathcal{PC})$ and $u(t,x)=\big(Z(t,\cdot)\star u_0\bigr)(x)$, where $u_0$ is described below.
\begin{itemize}
\item[(i)] Let $d\in \mathbb{N}$, $1< p<\sigma_1(\rho,d)$, $1<q, r<\infty$, such that $1+\frac{1}{r}=\frac{1}{p}+
\frac{1}{q}$, and $u_0\in L_1(\mathbb{R}^d)\cap L_q(\mathbb{R}^d)$. Then
\[
|u(t,\cdot)|_r \lesssim \big[(1\ast \ell)(t)\big]^{ -\frac{ d }{\rho}\,\left(1-\frac{1}{p}\right)},\quad t>0.
\]
\item[(ii)] Let $d>\rho$, $1< q,r < \infty$, such that $\frac{1}{r}+\frac{\rho}{d}=\frac{1}{q}$, and $u_0\in L_1(\mathbb{R}^d)\cap L_q(\mathbb{R}^d)$. Then
\[
|u(t,\cdot)|_r \lesssim \big[(1\ast \ell)(t)\big]^{ -1},\quad t>0.
\]
\item[(iii)] Let $d> \rho$ and $u_0\in L_1(\mathbb{R}^d)$. Then
\[
|u(t,\cdot)|_{\frac{d}{d-\rho},\infty}\lesssim \big[(1\ast \ell)(t)\big]^{ -1} ,\quad t>0.
\]

\end{itemize}

\end{theorem}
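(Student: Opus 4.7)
The unifying idea is to factor the Fourier symbol $\widetilde{u}(t,\xi)=s(t,|\xi|^\rho)\widetilde{u_0}(\xi)$ into a time-dependent Mihlin multiplier (which carries the decay in $t$) and a Riesz potential (which converts integrability in $u_0$ into integrability of the output). For any $\delta\in(0,1]$ to be chosen below,
\begin{equation*}
\widetilde{u}(t,\xi)=[(1\ast\ell)(t)]^{-\delta}\,m_\delta(\xi)\,|\xi|^{-\rho\delta}\widetilde{u_0}(\xi),\qquad m_\delta(\xi):=|\xi|^{\rho\delta}s(t,|\xi|^\rho)[(1\ast\ell)(t)]^{\delta}.
\end{equation*}
By Lemma \ref{Lem:aux5}, $m_\delta$ satisfies Mihlin's condition \eqref{Mihlin:multi} with a constant independent of $t\ge 0$, so the associated Fourier multiplier operator is bounded on $L_r(\mathbb{R}^d)$ for every $1<r<\infty$, uniformly in $t$. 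Since $|\xi|^{-\rho\delta}\widetilde{u_0}(\xi)$ is the Fourier transform of $(-\Delta)^{-\rho\delta/2}u_0$, this yields the master estimate
\begin{equation*}
|u(t,\cdot)|_r\lesssim [(1\ast\ell)(t)]^{-\delta}\,\big|(-\Delta)^{-\rho\delta/2}u_0\big|_r.
\end{equation*}

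For part (i), set $\delta=\tfrac{d}{\rho}\bigl(1-\tfrac{1}{p}\bigr)$. The hypothesis $p<\sigma_1(\rho,d)$ forces $\delta\in(0,1)$, so Lemma \ref{Lem:aux5} is applicable. The Hardy--Littlewood--Sobolev inequality \cite[Theorem 6.1.3]{Gra-2004} then gives $\big|(-\Delta)^{-\rho\delta/2}u_0\big|_r\lesssim |u_0|_q$ whenever $\tfrac{1}{q}-\tfrac{\rho\delta}{d}=\tfrac{1}{r}$, which rearranges exactly to the assumed $1+\tfrac{1}{r}=\tfrac{1}{p}+\tfrac{1}{q}$; substituting back produces the claimed exponent $-\tfrac{d}{\rho}(1-\tfrac{1}{p})$. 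Part (ii) is the critical endpoint $\delta=1$ of the same scheme: the condition $d>\rho$ guarantees $\rho\delta<d$, HLS still applies with $\tfrac{1}{q}-\tfrac{\rho}{d}=\tfrac{1}{r}$, and we obtain the decay rate $[(1\ast\ell)(t)]^{-1}$.

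Part (iii) is the limiting case $q=1$, where the strong HLS inequality breaks down. We again take $\delta=1$, but replace HLS by its weak-type endpoint, $(-\Delta)^{-\rho/2}:L_1(\mathbb{R}^d)\to L_{d/(d-\rho),\infty}(\mathbb{R}^d)$. Since Mihlin multipliers are Calder\'on--Zygmund operators, they extend boundedly to the Lorentz spaces $L_{s,q}(\mathbb{R}^d)$ for $1<s<\infty$, $0<q\le\infty$, by real interpolation from their $L_r$-boundedness and the weak $(1,1)$ bound. Applying this Lorentz-space multiplier estimate with $s=d/(d-\rho)$, $q=\infty$ and chaining with weak HLS delivers
\begin{equation*}
|u(t,\cdot)|_{d/(d-\rho),\infty}\lesssim [(1\ast\ell)(t)]^{-1}\big|(-\Delta)^{-\rho/2}u_0\big|_{d/(d-\rho),\infty}\lesssim [(1\ast\ell)(t)]^{-1}|u_0|_1.
\end{equation*}
The principal technical hurdle is already handled by Lemma \ref{Lem:aux5}, namely the uniform-in-$t$ verification of Mihlin's condition for $m_\delta$; the only remaining delicate point is the Lorentz-space extension in (iii), which is standard once the Calder\'on--Zygmund structure of Mihlin multipliers is invoked.
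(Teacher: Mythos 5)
Your proposal is correct and follows essentially the same route as the paper: the same factorization of the symbol into $[(1\ast\ell)(t)]^{-\delta}$ times a uniformly Mihlin multiplier times a Riesz potential, the same choice of $\delta$, and HLS (strong for (i)--(ii), weak-type for (iii)). The only cosmetic difference is in (iii), where the paper obtains the $L_{r,\infty}$ multiplier bound by real interpolation between two strong $L_{r_1}$ and $L_{r_2}$ bounds with the $(\theta,\infty)$ functor, whereas you invoke the Calder\'on--Zygmund structure and the weak $(1,1)$ endpoint; both are standard and yield the same conclusion.
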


\begin{proof} (i) Set $\delta=\frac{ d }{\rho}(1-\frac{1}{p})$. If $d\le \rho$ it is clear that $\delta\in (0,1)$ for all $p\in (1,\infty)$. If $d>\rho$ we have by assumption $1<p<\sigma_1(\rho,d)$, which is equivalent to $\delta\in (0,1)$. With $t>0$ being fixed we write
\begin{equation} \label{decomp2}
\widetilde{u}(t,\xi)=[(1\ast \ell)(t)]^{-\delta}\,\big(\psi_\delta(|\xi|^\rho) [(1\ast \ell)(t)]^\delta\big)\,\big(|\xi|^{-\rho\delta}\widetilde{u_0}(\xi)\big).
\end{equation}
By the Hardy-Littlewood-Sobolev theorem on fractional integration, see e.g.\ \cite[Theorem.\ 6.1.3]{Gra-2004},
$(-\Delta)^{-\frac{\rho\delta}{2}}u_0\in L_r(\mathbb{R}^d)$ and $|(-\Delta)^{-\frac{\rho\delta}{2}}u_0|_r\le C(d,\delta,q)|u_0|_q$; in fact, the choice of $\delta$ and the assumption  $1+\frac{1}{r}=\frac{1}{p}+\frac{1}{q}$ imply that
\[
\frac{1}{q}\,-\,\frac{1}{r}\,=\,\frac{\rho\delta}{d}\quad \mbox{and}\;\;\rho\delta<d.
\]
Thanks to Lemma \ref{Lem:aux5} we know that $m(\xi)=\psi_\delta(|\xi|^\rho) [(1\ast \ell)(t)]^\delta$
satisfies Mihlin's condition with a dimensional constant that is independent of $t>0$. Thus we may apply
Mihlin's multiplier theorem, see \cite[Theorem 5.2.7]{Gra-2004}, thereby obtaining that
\[
|u(t,\cdot)|_r\le C(d,r)[(1\ast \ell)(t)]^{-\delta}|(-\Delta)^{-\frac{\rho\delta}{2}}u_0|_r \lesssim [(1\ast \ell)(t)]^{-\delta}.
\]
This proves (i).

(ii) We consider again the decomposition (\ref{decomp2}), now setting $\delta=1$. As before we see that the Hardy-Littlewood-Sobolev inequality implies $(-\Delta)^{-\frac{\rho}{2}}u_0\in L_r(\mathbb{R}^d)$. The assertion follows then from
Lemma \ref{Lem:aux5} with $\delta=1$ and Mihlin's multiplier theorem.

(iii) We know already that $m(\xi)=\psi_1(|\xi|^\rho) [(1\ast \ell)(t)]$ is an $L_r(\mathbb{R}^d)$-Fourier multiplier for all $r\in (1,\infty)$ with a constant that only depends on $r$ and $d$, that is, the operator $T$ defined by $Tf=\mathcal{F}^{-1}(m \mathcal{F}f)$
($\mathcal{F}$ denoting the Fourier transform) on a suitable dense subset of $L_r(\mathbb{R}^d)$ is $L_r(\mathbb{R}^d)$-bounded, thus extends to an operator $T\in\mathcal{B}(L_r(\mathbb{R}^d))$, and $|T|_{\mathcal{B}(L_r)}\le M(d,r)$. The weak $L_r$-spaces
can be obtained from the strong ones by real interpolation. Assuming $1<r<\infty$ we may choose $r_1\in (1,r)$,
$r_2\in (r,\infty)$, and $\theta\in (0,1)$ such that $\frac{1}{r}=\frac{1-\theta}{r_1}+\frac{\theta}{r_2}$. By \cite[Theorem 1.18.2]{Tri-1995}, we then have $(L_{r_1},L_{r_2})_{\theta,\infty}=L_{r,\infty}$. It follows that $T\in \mathcal{B}(L_{r,\infty}(\mathbb{R}^d))$, with a norm bound that only depends on $r$ and $d$.

We choose $r=\frac{d}{d-\rho}$. Then $1-\frac{1}{r}=\frac{\rho}{d}$, and
the Hardy-Littlewood-Sobolev theorem (\citep[Theorem\ 6.1.3]{Gra-2004}) implies that $(-\Delta)^{-1}u_0\in L_{r,\infty}(\mathbb{R}^d)$.
Note that $u_0\in L_1(\mathbb{R}^d)$ and so we only get an estimate in a weak $L_r$-space.
The assertion now follows from (\ref{decomp2}) with $\delta=1$ and the fact that $T\in \mathcal{B}(L_{r,\infty}(\mathbb{R}^d))$, with a norm bound that is independent of $t>0$. 
\end{proof}

\begin{theorem}\label{Theo:Lr:Est:f}
Let $(k,\ell)\in(\mathcal{PC})$, $d\in\mathbb{N}$ and $\rho>0$. Let $u$ be the solution of \eqref{Equation:1}--\eqref{Equation:2} with $u_0\equiv0$ and $f(t,\cdot)\in L_1(\mathbb{R}^d)\cap L_q(\mathbb{R}^d)$ for all $t\ge 0$ and some $q\in(1,\infty)$. Then 
\begin{equation}\label{Est:Lq:Lq:f}
|u(t,\cdot)|_q\lesssim \int_0^t \ell(s)|f(t-s,\cdot)|_q ds.
\end{equation}
Additionally, if $1\le p\le\sigma_1(\rho,d)$, $1<q, r<\infty$, such that $1+\frac{1}{r}=\frac{1}{p}+
\frac{1}{q}$ then 
\begin{equation}\label{Est:Lq:Lr:f}
|u(t,\cdot)|_r\lesssim \int_0^t \ell(s)[(1\ast \ell)(s)]^{-\delta}|f(t-s,\cdot)|_q ds,
\end{equation}
where $\delta=\frac{d}{\rho}\left(1-\frac{1}{p}\right)$.
\end{theorem}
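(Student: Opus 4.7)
The starting point is the variation of parameters formula from \eqref{Variation}, which under $u_0\equiv 0$ gives
\[
u(t,\cdot) = \int_0^t Y(t-s,\cdot)\star f(s,\cdot)\,ds, \qquad \widetilde{Y}(\tau,\xi) = r(\tau,|\xi|^{\rho}).
\]
I will prove both estimates simultaneously by treating \eqref{Est:Lq:Lq:f} as the special case $\delta=0$ (equivalently $p=1$, $r=q$) of the more general decay estimate \eqref{Est:Lq:Lr:f}. By Minkowski's integral inequality,
\[
|u(t,\cdot)|_r \le \int_0^t |Y(t-s,\cdot)\star f(s,\cdot)|_r\,ds,
\]
so the heart of the proof is a uniform (in $\tau>0$) bound of the form
\[
|Y(\tau,\cdot)\star g|_r \lesssim \ell(\tau)[(1\ast\ell)(\tau)]^{-\delta}|g|_q, \qquad g\in L_1\cap L_q.
\]

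To prove this, for each fixed $\tau>0$ I will split the Fourier symbol of $Y(\tau,\cdot)$ as
\[
r(\tau,|\xi|^{\rho}) = \ell(\tau)[(1\ast\ell)(\tau)]^{-\delta}\cdot m_{\tau}(\xi)\cdot |\xi|^{-\rho\delta},
\]
where
\[
m_{\tau}(\xi) := |\xi|^{\rho\delta}r(\tau,|\xi|^{\rho})\,[(1\ast\ell)(\tau)]^{\delta}[\ell(\tau)]^{-1}.
\]
By Lemma \ref{Lem:aux6} applied with the prescribed $\delta\in[0,1]$, the symbol $m_{\tau}$ is uniformly bounded in $\tau>0$ and satisfies Mihlin's condition \eqref{Mihlin:multi} with a constant $M$ depending only on $d$. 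Since $1<r<\infty$, Mihlin's multiplier theorem (\cite[Theorem 5.2.7]{Gra-2004}) yields
\[
\bigl|\mathcal{F}^{-1}\bigl(m_{\tau}(\xi)\,|\xi|^{-\rho\delta}\widetilde{g}(\xi)\bigr)\bigr|_r \le C(d,r)\,\bigl|(-\Delta)^{-\rho\delta/2}g\bigr|_r,
\]
with a constant independent of $\tau$. Next, the constraints $1\le p\le \sigma_{1}(\rho,d)$, $1+\tfrac{1}{r}=\tfrac{1}{p}+\tfrac{1}{q}$ and $\delta=\tfrac{d}{\rho}(1-\tfrac{1}{p})$ are precisely equivalent to $\tfrac{1}{q}-\tfrac{1}{r}=\tfrac{\rho\delta}{d}$ together with $\rho\delta\le d$; hence the Hardy--Littlewood--Sobolev theorem on fractional integration (\cite[Theorem 6.1.3]{Gra-2004}) gives $|(-\Delta)^{-\rho\delta/2}g|_r\lesssim |g|_q$ when $\delta\in(0,1]$, and the bound is trivial when $\delta=0$.

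Combining these two estimates produces the desired $|Y(\tau,\cdot)\star g|_r\lesssim \ell(\tau)[(1\ast\ell)(\tau)]^{-\delta}|g|_q$, and substituting back into Minkowski's inequality, followed by the change of variables $s\mapsto t-s$, yields \eqref{Est:Lq:Lr:f}. The case $\delta=0$ (so $p=1$, $r=q$) is permitted by Lemma \ref{Lem:aux6} and gives \eqref{Est:Lq:Lq:f} directly. The main difficulty is not algebraic but verifying that all ingredients combine uniformly in $\tau$: concretely, one must check that Lemma \ref{Lem:aux6} truly covers the endpoint $\delta=0$ (so that part (i) falls out of the same argument) and that the index identities from $1+\tfrac{1}{r}=\tfrac{1}{p}+\tfrac{1}{q}$ correctly align with the scaling $\rho\delta$ forced by Hardy--Littlewood--Sobolev across the full range $p\in[1,\sigma_{1}(\rho,d)]$, including the limiting case $\delta=1$ where $\rho<d$ is needed.
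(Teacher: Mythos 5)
Your proposal is correct and follows essentially the same route as the paper: the same symbol decomposition $r(s,|\xi|^{\rho})=\ell(s)[(1\ast\ell)(s)]^{-\delta}\,m_s(\xi)\,|\xi|^{-\rho\delta}$, Minkowski's integral inequality, Lemma \ref{Lem:aux6} combined with Mihlin's multiplier theorem, and Hardy--Littlewood--Sobolev for the fractional integration step. The only cosmetic difference is that you unify \eqref{Est:Lq:Lq:f} as the $\delta=0$ case of \eqref{Est:Lq:Lr:f}, which is legitimate since Lemma \ref{Lem:aux6} is stated for $\delta\in[0,1]$; the paper writes the two cases out separately but uses the identical mechanism.
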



\begin{proof}
With $t>0$ being fixed we note that 
\begin{align*}
|u(t,\cdot)|_q&=\left|\int^t_0 \mathcal{F}^{-1}\bigl(\widetilde{Y}(s,\xi)\widetilde{f}(t-s,\xi)\bigr)(\cdot)ds\right|_q\\
&=\left|\int^t_0 \ell(s)\mathcal{F}^{-1}\bigl([\ell(s)]^{-1}r(s,|\xi|^\rho)\widetilde{f}(t-s,\xi)\bigr)(\cdot)ds\right|_q.
\end{align*}
Using Minkowsky's inequality for integrals (see \cite[Theorem 6.19]{Foll-1999}), we have that 
\[
|u(t,\cdot)|_q\le\int^t_0 \ell(s)\left|\mathcal{F}^{-1}\bigl([\ell(s)]^{-1}r(s,|\xi|^\rho)\widetilde{f}(t-s,\xi)\bigr)(\cdot)\right|_q ds.
\]
It follows from Lemma \ref{Lem:aux6} that $m(\xi)=r(t,|\xi|^\rho)[\ell(t)]^{-1}$ is an $L_q$-Fourier multiplier for all $q\in(1,\infty)$. Therefore, there exists a constant $M(q,d)$ that depends on $q$ and $d$ such that 
\[
|u(t,\cdot)|_q\le M(q,d)\int^t_0 \ell(s)\left|f(t-s,\cdot)\right|_q ds\lesssim \int^t_0 \ell(s)\left|f(t-s,\cdot)\right|_q .
\]

On the other hand, if $1\le p\le\sigma_{1}(d,\rho)$ and $\delta=\frac{d}{\rho}\left(1-\frac{1}{p}\right)$, then $\delta\in [0,1]$. Further,
\begin{align*}
|u(t,\cdot)|_r=&\left|\int^t_0 \ell(s)[(1\ast\ell)(s)]^{-\delta}\mathcal{F}^{-1}\bigl([\ell(s)]^{-1}[(1\ast\ell)(s)]^{\delta}\phi_\delta(|\xi|^\rho)|\xi|^{-\rho\delta}\widetilde{f}(t-s,\xi)\bigr)(\cdot)ds\right|_r.
\end{align*}

Using again Minkowsky's inequality for integrals, we have that 
\[
|u(t,\cdot)|_r\le\int^t_0 \ell(s)[(1\ast \ell)(s)]^{-\delta}\left|\mathcal{F}^{-1}\bigl([\ell(s)]^{-1}[(1\ast \ell)(s)]^{\delta}\phi_\delta(|\xi|^\rho)|\xi|^{-\rho\delta}\widetilde{f}(s,\xi)\bigr)(\cdot)\right|_r ds.
\]
Since $\delta\in[0,1]$, it follows from Lemma \ref{Lem:aux6} that $m(\xi)=\phi(|\xi|^\rho)[\ell(t)]^{-1}[(1\ast\ell)(t)]^{\delta}$ is an $L_r$-Fourier multiplier for all $r\in(1,\infty)$. Therefore, there exists a positive constant $M(q,d)$ such that 
\[
|u(t,\cdot)|_r\le M(q,d)\int^t_0 \ell(s)[(1\ast \ell)(s)]^{-\delta}\left|(-\Delta)^{-\frac{\rho\delta}{2}}f(t-s,\cdot)\right|_r ds.
\]
Since $1+\frac{1}{r}=\frac{1}{p}+
\frac{1}{q}$,  we can apply Hardy-Littlewood-Sobolev's Inequality for fractional integration to prove that there is a positive constant $C(d,\delta,r)$ such that
\[
|u(t,\cdot)|_r\le M(q,d)C(d,\delta,r)\int^t_0 \ell(s)[(1\ast \ell)(s)]^{-\delta}\left|f(t-s,\cdot)\right|_q ds.
\]
Therefore 
\[
|u(t,\cdot)|_r\lesssim \displaystyle\int^t_0 \ell(s)[(1\ast \ell)(s)]^{-\delta}\left|f(t-s,\cdot)\right|_q ds.
\]
\end{proof}

\begin{remark}
We point out that the estimate \eqref{Est:Lq:Lr:f} decays to zero faster than estimate \eqref{Est:Lq:Lq:f} if only if $\ell\notin L_1(\mathbb{R}_+)$. 
\end{remark}

\medskip


\subsection*{$L_r$- estimates of gradient of the solution} Next we obtain $L_r$-estimates of gradient of solutions to \eqref{Equation:1}-\eqref{Equation:2}. With this end, we denote

\begin{equation}\label{sigma:2}
\sigma_2(\rho,d)=\begin{cases}\dfrac{d}{d-\rho+1},& d>\rho-1,\\ \infty, & \text{otherwise}.\end{cases}
\end{equation}

\begin{theorem}\label{Theo:Grad:Sol}
Let $d\in \mathbb{N}$, and $(k, \ell)\in (\mathcal{PC})$. Let $u(t,x)=\big(Z(t,\cdot)\star u_0\bigr)(x)$. Assume that $u_0\in L_1(\mathbb{R}^d)\cap L_q(\mathbb{R}^d)$. The following statements hold.

\begin{itemize}
\item[(i)] If $\rho\ge 1$, then 
\[
|\nabla u(t,\cdot)|_q \lesssim \big[(1\ast \ell)(t)\big]^{ -\frac{1}{\rho}},\quad t>0.
\]

\item[(ii)] Let $\rho\ge1$, $1\le p<\sigma_2(\rho,d)$ and $1<q, r<\infty$, such that $1+\frac{1}{r}=\frac{1}{p}+
\frac{1}{q}$. Then
\[
|\nabla u(t,\cdot)|_r \lesssim \big[(1\ast \ell)(t)\big]^{-\frac{1}{\rho} -\frac{ d }{\rho}\,\left(1-\frac{1}{p}\right)},\quad t>0.
\]
\item[(iii)] Let $d>\rho-1$ and $1< q,r < \infty$ such that $\frac{1}{r}+\frac{\rho-1}{d}=\frac{1}{q}$. Then
\[
|\nabla u(t,\cdot)|_r \lesssim \big[(1\ast \ell)(t)\big]^{ -1},\quad t>0.
\]
\item[(iv)] Let $d> \rho-1$ and $u_0\in L_1(\mathbb{R}^d)$. Then
\[
|\nabla u(t,\cdot)|_{\frac{d}{d-\rho+1},\infty}\lesssim \big[(1\ast \ell)(t)\big]^{ -1} ,\quad t>0.
\]

\end{itemize}

\end{theorem}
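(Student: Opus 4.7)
The plan is to mimic the proof of Theorem \ref{Theo:Lr:Est:u0}, the only new feature being the extra factor $i\xi_j$ coming from taking a partial derivative, which we will absorb into a Riesz transform. Writing $u = Z(t,\cdot)\star u_0$, for each $j = 1,\ldots,d$ we have
\begin{equation*}
\widetilde{\partial_j u}(t,\xi) \,=\, i\xi_j\, s(t,|\xi|^\rho)\,\widetilde{u_0}(\xi).
\end{equation*}
The key algebraic decomposition we will use is
\begin{equation*}
i\xi_j\, s(t,|\xi|^\rho) \,=\, \frac{i\xi_j}{|\xi|}\cdot |\xi|^{1-\rho\delta}\cdot |\xi|^{\rho\delta}s(t,|\xi|^\rho) \,=\, R_j(\xi)\cdot |\xi|^{1-\rho\delta}\cdot \psi_{\delta}(|\xi|^\rho),
\end{equation*}
where $R_j(\xi)=i\xi_j/|\xi|$ is the symbol of the $j$-th Riesz transform and $\psi_\delta(\mu)=\mu^\delta s(t,\mu)$ as in Lemma \ref{Lem:aux5}. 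The parameter $\delta\in(0,1]$ will be tuned to the desired decay rate, and we will insert the factor $[(1\ast\ell)(t)]^{-\delta}[(1\ast\ell)(t)]^{\delta}$ as in the proof of Theorem \ref{Theo:Lr:Est:u0}.

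For (i) I take $\delta=1/\rho$, which lies in $(0,1]$ precisely because $\rho\ge 1$. Then $|\xi|^{1-\rho\delta}=1$ and
\begin{equation*}
\widetilde{\partial_j u}(t,\xi) \,=\, [(1\ast\ell)(t)]^{-1/\rho}\,R_j(\xi)\,\bigl(\psi_{1/\rho}(|\xi|^\rho)[(1\ast\ell)(t)]^{1/\rho}\bigr)\widetilde{u_0}(\xi).
\end{equation*}
By Lemma \ref{Lem:aux5}, the second factor is a Mihlin multiplier on $L_q(\mathbb{R}^d)$ with bound independent of $t$, while $R_j$ is a classical Calder\'on--Zygmund operator bounded on $L_q$ for $1<q<\infty$. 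Combining with Mihlin's multiplier theorem yields (i).

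For (ii) I take $\delta=\frac{1}{\rho}+\frac{d}{\rho}(1-\frac{1}{p})$; the assumption $1\le p<\sigma_2(\rho,d)$ is equivalent to $\delta\in(1/\rho,1]$. With this choice the exponent $\alpha := \rho\delta-1 = d(1-1/p)$ satisfies $\alpha\in(0,d)$, so by the Hardy--Littlewood--Sobolev theorem \cite[Theorem 6.1.3]{Gra-2004} the fractional integral $(-\Delta)^{-\alpha/2}u_0 = \mathcal{F}^{-1}(|\xi|^{-\alpha}\widetilde{u_0})$ belongs to $L_r(\mathbb{R}^d)$ with the index relation $1+\frac{1}{r}=\frac{1}{p}+\frac{1}{q}$. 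Writing
\begin{equation*}
\widetilde{\partial_j u}(t,\xi) \,=\, [(1\ast\ell)(t)]^{-\delta}\,R_j(\xi)\,\bigl(\psi_{\delta}(|\xi|^\rho)[(1\ast\ell)(t)]^{\delta}\bigr)\,|\xi|^{-\alpha}\widetilde{u_0}(\xi),
\end{equation*}
Mihlin's theorem together with Lemma \ref{Lem:aux5} and boundedness of $R_j$ on $L_r$ gives (ii). Part (iii) is the borderline case $\delta=1$ of the same decomposition, which requires $\alpha=\rho-1<d$, i.e.\ $d>\rho-1$; the HLS condition $\frac{1}{r}+\frac{\rho-1}{d}=\frac{1}{q}$ is precisely the hypothesis. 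For (iv) we are in the endpoint $q=1$ where Mihlin's theorem does not apply at $L_1$ and HLS only gives a weak-$L_r$ estimate, namely $(-\Delta)^{-(\rho-1)/2}u_0\in L_{d/(d-\rho+1),\infty}(\mathbb{R}^d)$; I will then pass the bounded multiplier $R_j\cdot(\psi_1(|\xi|^\rho)[(1\ast\ell)(t)])$ through by real interpolation, exactly as in the proof of Theorem \ref{Theo:Lr:Est:u0}(iii), using $(L_{r_1},L_{r_2})_{\theta,\infty}=L_{r,\infty}$ from \cite[Theorem 1.18.2]{Tri-1995}.

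The main obstacle here is purely bookkeeping: verifying that the decomposition produces an admissible $\delta\in(0,1]$ under each set of hypotheses and that the resulting combined symbol $R_j(\xi)\bigl(\psi_{\delta}(|\xi|^\rho)[(1\ast\ell)(t)]^{\delta}\bigr)$ is, uniformly in $t\ge 0$, a Mihlin (or, in the endpoint case, strong-weak) Fourier multiplier. Both of these are straightforward consequences of Lemma \ref{Lem:aux5} and the standard calculus of Calder\'on--Zygmund symbols, so no essentially new estimate is needed beyond what has already been developed in this section.
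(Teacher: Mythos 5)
Your proposal is correct and follows essentially the same route as the paper's own proof: the same factorization of the symbol into the Riesz-transform piece $\xi_j/|\xi|$, the Mihlin multiplier $\psi_\delta(|\xi|^\rho)[(1\ast\ell)(t)]^{\delta}$ from Lemma \ref{Lem:aux5}, and a negative power of $|\xi|$ absorbed by Hardy--Littlewood--Sobolev, with real interpolation for the weak endpoint (iv). The only difference is cosmetic bookkeeping — you merge the paper's two exponents $\tfrac{1}{\rho}$ and $\delta$ into a single parameter (note the range should be $\delta\in[\tfrac{1}{\rho},1)$ rather than $(\tfrac{1}{\rho},1]$, a harmless slip).
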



\begin{proof} We note that there is a positive constant $C(q,d)$ such that
\[
|\nabla u(t,\cdot)|_q\le C(q,d)\left(\int_{\mathbb{R}^d} \left(\max_{1\le j\le d} |\partial_{x_j} u(t,x)|\right)^{q} dx\right)^{1/q}.
\]
Let $j\in\{1,2,\cdots,d\}$ and $t>0$ be fixed. In order to prove the statement $(i)$, we note that
\[
\partial_{x_j} u(t,x)=\mathcal{F}^{-1} \Bigl(-i\xi_j s(t,|\xi|^{\rho})\Bigr)=[(1\ast\ell)(t)]^{-\frac{1}{\rho}}\mathcal{F}^{-1} \Bigl(-i\xi_j\ [(1\ast\ell)(t)]^{\frac{1}{\rho}}\ s(t,|\xi|^{\rho})\widetilde{u_0}(\xi)\Bigr).
\]
Define $m_j(\xi)=-i\xi_j\ [(1\ast\ell)(t)]^{\frac{1}{\rho}}\ s(t,|\xi|^{\rho})$. We note that 
\[
m_{j}(\xi)=-i\xi_j|\xi|^{-1}\ (|\xi|^{\rho})^{\frac{1}{\rho}}\ s(t,|\xi|^\rho)\ [(1\ast\ell)(t)]^{\frac{1}{\rho}}.
\]

Since $\rho\ge 1$, it follows from Lemma \ref{Lem:aux5} that 
\[
m_0(\xi):=(|\xi|^{\rho})^{\frac{1}{\rho}}s(t,|\xi|^\rho)\ [(1\ast\ell)(t)]^{\frac{1}{\rho}},
\]
is uniformly bounded w.r.t.\ $t\ge 0$ and it satisfies the Mihlin's condition. Further, by a direct a computation, the function $\xi\mapsto\xi_j|\xi|^{-1}$ satisfies the Mihlin's condition. Using the Leibniz's formula for differentiation of product of two functions we conclude that $m_j$ satisfies the Mihlin's condition. Thus we may apply
Mihlin's multiplier theorem to obtain 
\[
|\partial_{x_j} u(t,\cdot)|_q\le C(q,d)[(1\ast\ell)(t)]^{-\frac{1}{\rho}}|u_0|_q,
\]
where $C(q,d)$ is a constant depending only $q$ and $d$. Since the last estimative is independent of $j$, in particular it is valid for $\displaystyle\max_{1\le j\le d}|\partial_j u(t,x)|$. Hence, we have 
\[
|\nabla u(t,\cdot)|_q\lesssim [(1\ast\ell)(t)]^{-\frac{1}{\rho}}.
\]

To prove (ii) we define $\delta=\frac{ d }{\rho}\left(1-\frac{1}{p}\right)$. Since $\rho\ge 1$  and $1\le p<\sigma_2(d,\rho)$, we have that $\frac{1}{\rho}+\delta\in (0,1]$. With $t>0$ being fixed, we write
\begin{equation*} \label{decomp3}
\widetilde{\partial_{x_j}u}(t,\xi)=[(1\ast\ell)(t)]^{-(\frac{1}{\rho}+\delta)}\Bigl(-i\xi_j|\xi|^{-1}\ [(1\ast\ell)(t)]^{\frac{1}{\rho}+\delta} (|\xi|^\rho)^{\frac{1}{\rho}+\delta}\ s(t,|\xi|^{\rho})\ |\xi|^{-\delta\rho}\widetilde{u_0}(\xi)\Bigr).
\end{equation*}

Using Hardy-Littlewood-Sobolev inequality on fractional integrals, we have that  
$(-\Delta)^{-\frac{\rho\delta}{2}}u_0\in L_r(\mathbb{R}^d)$ and $|(-\Delta)^{-\frac{\rho\delta}{2}}u_0|_r\le C(d,\delta,q)|u_0|_q$.

Since $\frac{1}{\rho}+\delta\in (0,1]$, thanks to Lemma \ref{Lem:aux5} we know that $[(1\ast\ell)(t)]^{\frac{1}{\rho}+\delta} (|\xi|^\rho)^{\frac{1}{\rho}+\delta}\ s(t,|\xi|^{\rho})$
satisfies Mihlin's condition with a dimensional constant that is independent of $t>0$. Thus we may apply
Mihlin's multiplier theorem, thereby obtaining that
\[
|\partial_{x_j}u(t,\cdot)|_r\le C(d,r)[(1\ast \ell)(t)]^{-\frac{1}{\rho}-\delta}|(-\Delta)^{-\frac{\rho\delta}{2}}u_0|_r \lesssim [(1\ast \ell)(t)]^{-\frac{1}{\rho}-\delta}.
\]
This proves (ii). 

To prove (iii) we note that the hypotheses imply that $\frac{1}{\rho}+\delta=1$. As before we see that the Hardy-Littlewood-Sobolev inequality implies $(-\Delta)^{-\frac{d}{2}\left(1-\frac{1}{p}\right)}u_0\in L_r(\mathbb{R}^d)$. From here, the proof follows the same lines of the proof of the statement (ii).

(iv) We know already that $m_j(\xi)=-i\xi_j |\xi|^{-1}(|\xi|^{\rho})^{1/\rho}\ s(t,|\xi|^\rho) [(1\ast \ell)(t)]$ is an $L_r(\mathbb{R}^d)$-Fourier multiplier for all $r\in (1,\infty)$ with a constant that only depends on $r$ and $d$., that is, the operator $T$ defined by $Tf=\mathcal{F}^{-1}(m_j \mathcal{F}f)$
($\mathcal{F}$ denoting the Fourier transform) on a suitable dense subset of $L_r(\mathbb{R}^d)$ is $L_r(\mathbb{R}^d)$-bounded, thus extends to an operator $T\in\mathcal{B}(L_r(\mathbb{R}^d))$, and $|T|_{\mathcal{B}(L_r)}\le M(d,r)$. The weak $L_r$-spaces
can be obtained from the strong ones by real interpolation. Assuming $1<r<\infty$ we may choose $r_1\in (1,r)$,
$r_2\in (r,\infty)$, and $\theta\in (0,1)$ such that $\frac{1}{r}=\frac{1-\theta}{r_1}+\frac{\theta}{r_2}$. By \cite[Theorem 1.18.2]{Tri-1995}, we then have $(L_{r_1},L_{r_2})_{\theta,\infty}=L_{r,\infty}$. It follows that $T\in \mathcal{B}(L_{r,\infty}(\mathbb{R}^d))$, with a norm bound that only depends on $r$ and $d$.

We choose $r=\frac{d}{d-\rho+1}$. Then $1-\frac{1}{r}=\frac{\rho-1}{d}$, and
the Hardy-Littlewood-Sobolev theorem (\citep[Theorem\ 6.1.3]{Gra-2004}) implies that $(-\Delta)^{-\frac{d}{2}\left(1-\frac{1}{p}\right)}u_0\in L_{r,\infty}(\mathbb{R}^d)$.
Note that $u_0\in L_1(\mathbb{R}^d)$ and so we only get an estimate in a weak $L_r$-space.
The assertion now follows from (\ref{decomp3}) with $\frac{1}{\rho}+\delta=1$ and the fact that $T\in \mathcal{B}(L_{r,\infty}(\mathbb{R}^d))$, with a norm bound that is independent of $t>0$. 
\end{proof}


\begin{theorem}\label{Theo:Lr:Grad:Est:f}
Let $(k,\ell)\in(\mathcal{PC})$, $d\in\mathbb{N}$ and $\rho\ge1$. Assume that $u$ is the solution of \eqref{Equation:1}--\eqref{Equation:2} with $u_0\equiv0$ given by (\ref{Variation}). The following assertions hold.
\begin{enumerate}[(i)]
\item If $f(t,\cdot)\in L_1(\mathbb{R}^d)\cap L_q(\mathbb{R}^d)$ for all $t\ge 0$ and some $q\in(1,\infty)$,  then 
\[
|\nabla u(t,\cdot)|_q\lesssim \int_0^t \ell(s)[(1\ast \ell)(s)]^{-\frac{1}{\rho}}\ |f(t-s,\cdot)|_q ds.
\]
\item If $f(t,\cdot)\in L_1(\mathbb{R}^d)\cap L_q(\mathbb{R}^d)$, $1\le p\le\sigma_2(\rho,d)$ and $1<q, r<\infty$, such that $1+\frac{1}{r}=\frac{1}{p}+
\frac{1}{q}$, then 
\[
|\nabla u(t,\cdot)|_r\lesssim \int_0^t \ell(s)[(1\ast \ell)(s)]^{-\frac{1}{\rho}-\delta}\ |f(t-s,\cdot)|_q ds,
\]
where $\delta=\frac{d}{\rho}\left(1-\frac{1}{p}\right)$.
\end{enumerate}
\end{theorem}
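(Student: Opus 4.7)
The plan is to follow the same template used in the proofs of Theorem \ref{Theo:Lr:Est:f} and Theorem \ref{Theo:Grad:Sol}, combining the variation of parameters formula for $Y$ with Minkowski's inequality for integrals, the Mihlin multiplier theorem, and (for part (ii)) the Hardy--Littlewood--Sobolev inequality. Starting from the Fourier representation $\widetilde{Y}(t,\xi)=r(t,|\xi|^\rho)$, differentiating in $x_j$ and applying Fourier inversion gives
\[
\partial_{x_j} u(t,\cdot)=\int_0^t \mathcal{F}^{-1}\bigl(-i\xi_j\, r(t-s,|\xi|^\rho)\,\widetilde{f}(s,\xi)\bigr)\,ds,
\]
so by Minkowski's integral inequality it suffices to bound the $L_q$ (respectively $L_r$) norm of the inner inverse Fourier transform uniformly in $s$, with a prefactor that matches the claimed kernel $\ell(s)[(1\ast\ell)(s)]^{-1/\rho}$ (respectively $\ell(s)[(1\ast\ell)(s)]^{-1/\rho-\delta}$).

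For part (i), I would split the symbol as
\[
-i\xi_j\, r(s,|\xi|^\rho)=\ell(s)[(1\ast\ell)(s)]^{-1/\rho}\cdot\Bigl(\tfrac{-i\xi_j}{|\xi|}\Bigr)\cdot\Bigl((|\xi|^\rho)^{1/\rho}r(s,|\xi|^\rho)\,[(1\ast\ell)(s)]^{1/\rho}[\ell(s)]^{-1}\Bigr).
\]
The first factor is independent of $\xi$, the second is the Riesz transform symbol which satisfies Mihlin's condition, and the third factor equals $\phi_{1/\rho}(|\xi|^\rho)\,[(1\ast\ell)(s)]^{1/\rho}[\ell(s)]^{-1}$. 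Since $\rho\ge 1$ implies $1/\rho\in(0,1]$, Lemma \ref{Lem:aux6} guarantees that this third factor is uniformly bounded in $s$ and satisfies Mihlin's condition with a constant depending only on $d$. Leibniz's formula gives that the product of the last two factors still satisfies Mihlin's condition. Applying Mihlin's multiplier theorem in $L_q$ and summing the componentwise bounds yields the gradient estimate claimed in (i).

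For part (ii), the only new ingredient is the insertion of the factor $|\xi|^{-\rho\delta}$ in order to apply the Hardy--Littlewood--Sobolev theorem, exactly as in the proof of Theorem \ref{Theo:Lr:Est:f}(ii) and Theorem \ref{Theo:Grad:Sol}(ii). I would decompose
\[
-i\xi_j\, r(s,|\xi|^\rho)\,\widetilde{f}(s,\xi)=\ell(s)[(1\ast\ell)(s)]^{-\frac{1}{\rho}-\delta}\Bigl(\tfrac{-i\xi_j}{|\xi|}\Bigr)\Bigl(\phi_{\frac{1}{\rho}+\delta}(|\xi|^\rho)\tfrac{[(1\ast\ell)(s)]^{\frac{1}{\rho}+\delta}}{\ell(s)}\Bigr)\Bigl(|\xi|^{-\rho\delta}\widetilde{f}(s,\xi)\Bigr).
\]
The hypothesis $\rho\ge 1$ and $1\le p\le \sigma_2(\rho,d)$ ensures $\tfrac{1}{\rho}+\delta\in(0,1]$ (the upper endpoint is attained precisely when $p=\sigma_2(\rho,d)$, since then $\delta=1-1/\rho$), so Lemma \ref{Lem:aux6} again provides a Mihlin multiplier uniformly in $s$. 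The Riesz transform factor is handled as in part (i), while the factor $|\xi|^{-\rho\delta}$ corresponds to $(-\Delta)^{-\rho\delta/2}f(s,\cdot)$, whose $L_r$-norm is controlled by $|f(s,\cdot)|_q$ through Hardy--Littlewood--Sobolev since $1+1/r=1/p+1/q$ forces $1/q-1/r=\rho\delta/d$. Combining these bounds with Minkowski's integral inequality gives (ii).

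The only delicate step is the check $\tfrac{1}{\rho}+\delta\in(0,1]$ that guarantees applicability of Lemma \ref{Lem:aux6} with the correct exponent; this is precisely what is encoded by the constraints $\rho\ge 1$ and $p\le \sigma_2(\rho,d)$. Once this range is verified, the remainder of the argument is a routine merger of the proofs of Theorem \ref{Theo:Lr:Est:f} and Theorem \ref{Theo:Grad:Sol}.
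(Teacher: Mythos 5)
Your proposal is correct and follows essentially the same route as the paper: the same splitting of the symbol $-i\xi_j\,r(s,|\xi|^\rho)$ into the prefactor $\ell(s)[(1\ast\ell)(s)]^{-1/\rho-\delta}$, the Riesz-transform factor $-i\xi_j/|\xi|$, and the normalized multiplier $\phi_{1/\rho+\delta}(|\xi|^\rho)[(1\ast\ell)(s)]^{1/\rho+\delta}[\ell(s)]^{-1}$, combined with Minkowski's integral inequality, Mihlin's theorem, and (for (ii)) Hardy--Littlewood--Sobolev. A small point in your favor: you correctly invoke Lemma \ref{Lem:aux6} (the bounds for $\phi_\delta$ built from $r$), whereas the paper's proof text cites Lemma \ref{Lem:aux5} at the corresponding step, which is evidently a typo.
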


\begin{proof}
As in Theorem \ref{Theo:Grad:Sol}, we note that there is a positive constant $C(q,d)$ such that
\[
|\nabla u(t,\cdot)|_q\le C(q,d)\left(\int_{\mathbb{R}^d} \left(\max_{1\le j\le d} |\partial_{x_j} u(t,x)|\right)^{q} dx\right)^{1/q}.
\]

Let $j\in\{1,2,\cdots,d\}$ and $t>0$ be fixed. To prove the statement (i) we note that
\begin{align*}
\partial_{x_j} u(t,x)&=\int_0^t \mathcal{F}^{-1} \Bigl(-i\xi_j r(s,|\xi|^{\rho})\widetilde{f}(t-s,\xi)ds\Bigr)\\
&=\int_0^t \ell(s)[(1\ast\ell)(s)]^{-\frac{1}{\rho}}\ \mathcal{F}^{-1} \Bigl(-i\xi_j\ r(s,|\xi|^{\rho}) [\ell(s)]^{-1}[(1\ast\ell)(s)]^{\frac{1}{\rho}}\ \widetilde{f}(t-s,\xi)ds\Bigr).
\end{align*}
Define $m_j(t,\xi)=-i\xi_j\ r(t,|\xi|^{\rho}) [\ell(t)]^{-1}[(1\ast\ell)(t)]^{\frac{1}{\rho}}$. We note that 
\[
m_{j}(t,\xi)=-i\xi_j|\xi|^{-1}\ (|\xi|^{\rho})^{\frac{1}{\rho}}\ r(t,|\xi|^\rho)\ [\ell(t)]^{-1}[(1\ast\ell)(t)]^{\frac{1}{\rho}}.
\]

Since $\rho\ge 1$, it follows from Lemma \ref{Lem:aux5} that the function $m_j(t,\xi)$ satisfies the Mihlin's condition \ref{Mihlin:multi}. Using Minkowsky's inequality for integrals we have that 
\[
|\partial_{x_j} u(t,\cdot)|_q\le \int_0^t \ell(s)[(1\ast\ell)(s)]^{-\frac{1}{\rho}}\left|\mathcal{F}^{-1} \Bigl(m_j(\cdot)\widetilde{f}(t-s,\cdot)\Bigr)\right|_q.
\]

By Mihlin's multiplier theorem we have that $m_j(t,\xi)$ is an $L_q$-Fourier multiplier for all $q\in(1,\infty)$. Thus there is a constant $M$ such that 
\[
|\partial_{x_j} u(t,x)|_q\le M\int_0^t \ell(s)[(1\ast\ell)(s)]^{-\frac{1}{\rho}}\left|f(t-s,\cdot)\right|_q.
\]
We point out that the last estimative is independent of $j$. Hence, in particular it is valid for $\displaystyle\max_{1\le j \le d}|\partial_{x_j}u(t,x)|$. Therefore, we have 
\[
|\nabla u(t,\cdot)|_q\lesssim \int_0^t \ell(s)[(1\ast\ell)(s)]^{-\frac{1}{\rho}}\left|f(t-s,\cdot)\right|_q.
\]

To prove (ii) we define $\delta=\frac{ d }{\rho}(1-\frac{1}{p})$. Since $\rho\ge 1$  and $1<p<\sigma_2(d,\rho)$, we have that $\frac{1}{\rho}+\delta\in [0,1]$. We write
\[
\partial_{x_j}u(t,\xi)=-i\int_0^t \ell(s)[(1\ast\ell)(s)]^{-\left(\frac{1}{\rho}+\delta\right)}\mathcal{F}^{-1}\Bigl(\eta_j(s,\xi)\ |\xi|^{-\delta\rho}\widetilde{f}(t-s,\xi)ds\Bigr),
\]
where $\eta_j(t,\xi)=\xi_j|\xi|^{-1}\ [\ell(t)]^{-1}[(1\ast\ell)(t)]^{\frac{1}{\rho}+\delta} (|\xi|^\rho)^{\frac{1}{\rho}+\delta}\ r(t,|\xi|^{\rho})$. Using Minkowsky's inequality for integrals we have
\[
|\partial_{x_j}u(t,\xi)|_r\le \int_0^t \ell(s)[(1\ast\ell)(s)]^{-(\frac{1}{\rho}+\delta)}\left|\mathcal{F}^{-1}\Bigl(\eta_j(s,\xi)\ |\xi|^{-\delta\rho}\widetilde{f}(t-s,\xi)\Bigr)\right|_r ds.
\]
Since $\frac{1}{\rho}+\delta\in [0,1]$, thanks to Lemma \ref{Lem:aux5} we know that $\ell(t)[(1\ast\ell)(t)]^{\frac{1}{\rho}+\delta} (|\xi|^\rho)^{\frac{1}{\rho}+\delta}\ r(t,|\xi|^{\rho})$
satisfies Mihlin's condition with a dimensional constant that is independent of $t>0$. Thus we may apply
Mihlin's multiplier theorem, thereby obtaining that
\[
|\partial_{x_j}u(t,\xi)|_r\le \int_0^t \ell(s)[(1\ast\ell)(s)]^{-(\frac{1}{\rho}+\delta)}\left| (-\Delta)^{-\delta\rho}\widetilde{f}(t-s,\xi)\right|_r ds.
\]

By Hardy-Littlewood-Sobolev inequality on fractional integrals, we have that  
\[
(-\Delta)^{-\frac{\rho\delta}{2}}f(t,\cdot)\in L_r(\mathbb{R}^d),  \text{\ and\ } 
|(-\Delta)^{-\frac{\rho\delta}{2}}f(t,\cdot)|_r\le C(d,\delta,q)|f(t,\cdot)|_q.
\]
Therefore,
\[
|\partial_{x_j}u(t,\xi)|_r\lesssim \int_0^t \ell(s)[(1\ast\ell)(s)]^{-\left(\frac{1}{\rho}+\delta\right)}\left|f(t-s,\cdot)\right|_q ds.
\]

Since this estimate is independent of $j$, in particular it is valid for $\displaystyle \max_{1\le j\le d}|\partial_{x_j}u(t,x)|$. Therefore, we have
\[
|\nabla u(t,\xi)|_r\lesssim \int_0^t \ell(s)[(1\ast\ell)(s)]^{-(\frac{1}{\rho}+\delta)}\left|f(t-s,\cdot)\right|_q ds.
\]

\end{proof}


\section{Examples}\label{S:Ex}

In this section we discuss in detail the asymptotic behavior of the solutions to \eqref{Equation:1}-\eqref{Equation:2}, for several examples of pairs of kernels $(k,\ell)\in(\mathcal{PC})$. We will see that there are kernels that allow very different kinds of decay, e.g., exponential, algebraic, and logarithmic decay.

\bigskip

According to Theorem \ref{Theo:Lr:Est:u0} and Theorem \ref{Theo:Grad:Sol} the decay rate of the solutions and the gradient of the solutions to \eqref{Equation:1}--\eqref{Equation:2} with $f\equiv 0$, are determined by the behavior of powers of the function $[(1\ast\ell)(t)]^{-1}$, as $t\to\infty$. In the Examples \ref{Ex:1}, \ref{Ex:2} , \ref{Ex:3} and \ref{Ex:4} we have analized how is the behavior of these functions for several kernels $\ell$. For this reason, we will focus our attention into the no-homogeneous problem, that is the problem \eqref{Equation:1}-\eqref{Equation:2} with $u_0\equiv 0$.


\begin{corollary}\label{Theo:Ex:decay} Let $1\le p\le \sigma_1(d,\rho)$ and $1<q,r<\infty$ be such that $1+\frac{1}{p}=\frac{1}{r}+\frac{1}{q}$. Assume further that $|f(t,\cdot)|_q\lesssim g_{\gamma}(t)$ for some $\gamma\in(0,1)$. The following assertions hold.
\begin{enumerate}[$(i)$]
\item If $k=g_{1-\alpha}$, then 
\begin{equation}\label{Ex:decay:1}
|u(t,\cdot)|_r\lesssim t^{\gamma-1+\alpha-\frac{\alpha d}{\rho}\left(1-\frac{1}{p}\right)},\text{\ as \ }t\to\infty.
\end{equation}
\item If $k=g_{1-\alpha}+g_{1-\beta}$, with $0<\alpha<\beta<1$, then 
\begin{equation}\label{Ex:decay:2}
|u(t,\cdot)|_r\lesssim t^{\gamma-1+\alpha-\frac{\alpha d}{\rho}\left(1-\frac{1}{p}\right)},\text{\ as \ }t\to\infty.
\end{equation}

\item If $k(t)=g_{\beta}(t)E_{\alpha,\beta}(-\omega t^{\alpha})$, for $t>0$,
with $0<\alpha<\beta<1$ and $\omega>0$, then 
\begin{equation}\label{Ex:decay:3}
|u(t,\cdot)|_r\lesssim t^{\gamma-1+\bigl(\beta-\alpha-1\bigr)\bigl(1-\frac{d}{\rho}\bigl(1-\frac{1}{p}\bigr)\bigr)}, \text{\ as \ }t\to\infty.
\end{equation}
\end{enumerate}
\end{corollary}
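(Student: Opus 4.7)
The plan is to invoke Theorem \ref{Theo:Lr:Est:f}, specifically the estimate \eqref{Est:Lq:Lr:f}, which under the given assumptions on $p,q,r$ reads
\[
|u(t,\cdot)|_r \lesssim \int_0^t \ell(s)\,[(1\ast\ell)(s)]^{-\delta}\,|f(t-s,\cdot)|_q\,ds, \qquad \delta=\frac{d}{\rho}\Bigl(1-\frac{1}{p}\Bigr).
\]
Combined with the hypothesis $|f(t,\cdot)|_q\lesssim g_\gamma(t)=t^{\gamma-1}/\Gamma(\gamma)$, the problem reduces to estimating the temporal convolution $(F\ast g_\gamma)(t)$ as $t\to\infty$, where $F(s):=\ell(s)\,[(1\ast\ell)(s)]^{-\delta}$. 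For each of the three kernels the strategy is the same: identify the power-law tail of $F$ at infinity, confirm that $F$ has an integrable singularity at the origin, and then extract the asymptotics of the convolution.

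For case (i) the kernels are completely explicit: $\ell=g_\alpha$ and $(1\ast\ell)=g_{1+\alpha}$, so $F(s)=C s^{\alpha(1-\delta)-1}$ on all of $(0,\infty)$ with $C=C(\alpha,\delta)>0$. The integral reduces to a classical Beta function,
\[
\int_0^t s^{\alpha(1-\delta)-1}(t-s)^{\gamma-1}\,ds=B(\alpha(1-\delta),\gamma)\,t^{\alpha(1-\delta)+\gamma-1},
\]
directly yielding \eqref{Ex:decay:1}.

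For cases (ii) and (iii) the kernels are no longer pure powers, so I would rely on the Laplace-transform computations carried out in Examples \ref{Ex:2} and \ref{Ex:3}. In both cases the dominant singularity of $\widehat{\ell}(\lambda)$ as $\lambda\to 0^+$ is a single power $\lambda^{-a_1}$, with $a_1=\alpha$ (case (ii)) and $a_1=1+\alpha-\beta$ (case (iii)); via Karamata--Feller (Theorem \ref{Karamata}) this gives $\ell(s)\sim C_1 s^{a_1-1}$ and $(1\ast\ell)(s)\sim C_2 s^{a_1}$ as $s\to\infty$. Consequently $F(s)\sim C s^{a-1}$ with $a=a_1(1-\delta)\in(0,1)$, and the large-$\lambda$ expansion of $\widehat{\ell}$ controls $F$ near zero and shows its singularity is integrable. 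The exponents $a=\alpha(1-\delta)$ for case (ii) and $a=(1+\alpha-\beta)(1-\delta)$ for case (iii) match the powers appearing in \eqref{Ex:decay:2} and \eqref{Ex:decay:3}.

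The main technical obstacle is passing from the asymptotic information on $F$ to the asymptotic behavior of $F\ast g_\gamma$: a direct Tauberian inversion is blocked because the convolution of two nonincreasing functions need not be monotone. The cleanest workaround I would use is a cutoff decomposition $F=F_1+F_2$, where $F_1$ coincides with $F$ on $[0,1]$ and vanishes on $[1,\infty)$ and $F_2:=F-F_1$ is supported on $[1,\infty)$. The part $(F_1\ast g_\gamma)(t)$ is bounded for $t\ge 2$ by $|F_1|_1\,g_\gamma(t/2)\lesssim t^{\gamma-1}$, which is negligible compared with $t^{a+\gamma-1}$ since $a>0$. For $(F_2\ast g_\gamma)(t)$ the sharp asymptotic $F(s)\sim C s^{a-1}$ combined with the rescaling $s=t\sigma$ and dominated convergence (the integrand being pointwise bounded by $C\sigma^{a-1}(1-\sigma)^{\gamma-1}$, which is integrable on $[0,1]$) produces
\[
(F_2\ast g_\gamma)(t)\sim C\, t^{a+\gamma-1}\int_0^1 \sigma^{a-1}(1-\sigma)^{\gamma-1}\,d\sigma=C\,B(a,\gamma)\,t^{a+\gamma-1},
\]
which is the claimed decay rate.
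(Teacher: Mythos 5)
Your proposal is correct and follows the same overall skeleton as the paper's proof: reduce to the convolution bound \eqref{Est:Lq:Lr:f} of Theorem \ref{Theo:Lr:Est:f}, treat case (i) by the explicit Beta integral, and in cases (ii)--(iii) extract the power-law behaviour of $F(s)=\ell(s)[(1\ast\ell)(s)]^{-\delta}$ at $0$ and at $\infty$ from the Laplace transforms via Karamata--Feller. Where you genuinely diverge is in the handling of the convolution $F\ast g_\gamma$. The paper splits the \emph{domain} at $t/2$: on $[t/2,t]$ it freezes the nonincreasing factor $F$ at $t/2$ and integrates $g_\gamma$, while on $[0,t/2]$ it freezes $g_\gamma(t-s)\lesssim t^{\gamma-1}$ and then splits again at a fixed $c_0$ to separate the integrable singularity of $F$ at the origin from its tail. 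You instead split the \emph{kernel}, $F=F_1+F_2$ with $F_1$ supported near the origin (whose contribution is $O(t^{\gamma-1})$, negligible since $a>0$) and treat the tail by the rescaling $s=t\sigma$ plus dominated convergence. Your route is slightly cleaner and buys more: it yields the sharp asymptotic constant $C\,B(a,\gamma)$ rather than just an upper bound, and it avoids the paper's reliance on monotonicity of $F$ on $[t/2,t]$ (you only need the upper bound $F(s)\lesssim s^{a-1}$ for $s\ge 1$ as an integrable majorant, which follows from the same Karamata asymptotics). The only caveat, which your argument shares with the paper's, is the endpoint $p=\sigma_1(\rho,d)$, i.e.\ $\delta=1$: there $a=a_1(1-\delta)=0$, your claim $a\in(0,1)$ fails, the singularity of $F$ at the origin becomes non-integrable in general, and the true decay picks up a logarithmic correction; strictly speaking the stated rates are justified only for $\delta<1$. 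Apart from flagging that endpoint, your proof is complete.
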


\begin{proof}
We begin proving $(i)$. In this case, a direct calculation shows that 
\[
\ell(t)=g_{\alpha}(t)\sim t^{1-\alpha}, \text{ \ as \ } t\to\infty.,
\]
and
\[
(1\ast\ell)(t)=g_{1+\alpha}(t)\sim t^{\alpha}, \text{ \ as \ } t\to\infty.
\]
Therefore, 
\[
\int_0^t \ell(s)[(1\ast\ell)(s)]^{-\delta}g_{\gamma}(t-s)ds\lesssim t^{\gamma-1+\alpha-\frac{\alpha d}{\rho}\left(1-\frac{1}{p}\right)},\text{\ as \ }t\to\infty.
\]

We point out that in this example we have obtained the same rate of decay of \cite[Proposition 5.15]{Kem-Sil-Zach-2017}. However, we do not need to impose boundedness of $|f(t,\cdot)|_q$ near $0$.

\medskip

In the case $(ii)$ we do not know explicitly an expression for $\ell$. To find (\ref{Ex:decay:2}) we proceed as follows  
\[
\int_0^t \ell(s)[(1\ast\ell)(s)]^{-\delta}g_{\gamma}(t-s)ds=I_1(t)+I_2(t),
\]
where 
\[
I_1(t)=\displaystyle\int_{t/2}^t \ell(s)[(1\ast\ell)(s)]^{-\delta}g_{\gamma}(t-s)ds, \quad t>0,
\] 
and 
\[
I_2(t)=\displaystyle\int_0^{t/2} \ell(s)[(1\ast\ell)(s)]^{-\delta}g_{\gamma}(t-s)ds,\quad t>0.
\]

Since $s\mapsto\ell(s)[(1\ast\ell)(s)]^{-\delta}$ is non-increasing and non-singular in $[t/2,t]$, we have that 
\begin{align*}
I_1(t)&\lesssim\ell\Bigl(\frac{t}{2}\Bigr)\Bigl[(1\ast\ell)\Bigl(\frac{t}{2}\Bigr)\Bigr]^{-\delta}\int_{t/2}^t (t-s)^{\gamma-1}ds, \quad t>0\\
&\lesssim  \ell\Bigl(\frac{t}{2}\Bigr)\Bigl[(1\ast\ell)\Bigl(\frac{t}{2}\Bigr)\Bigr]^{-\delta}t^{\gamma}, \quad t>0
\end{align*}
As we have proved in Example \ref{Ex:2}, we have that $[(1\ast\ell)(t)]^{-1}\sim t^{-\alpha}$ as $t\to \infty$. Further, we have that $\ell(t)\sim t^{\alpha-1}$, as $t\to\infty$. Therefore, we have 
\[
I_1(t)\lesssim t^{\alpha-1-\alpha\delta +\gamma}.
\]
In what concerns to $I_2(t)$, we have that $s\mapsto g_{\gamma}(t-s)$ is increasing and non-singular in $[0,t/2]$. Thus,
\begin{align*}
I_2(t)&\lesssim t^{\gamma-1}\int_{0}^{t/2} \ell(s)[(1\ast\ell)(s)]^{-\delta}ds\\
&\lesssim t^{\gamma-1}\int_0^{c_0} \ell(s)[(1\ast\ell)(s)]^{-\delta}ds+t^{\gamma-1}\int_{c_0}^{\frac{t}{2}} \ell(s)[(1\ast\ell)(s)]^{-\delta}ds,\end{align*}
where $c_0$ satisfies that $\ell(t)\lesssim t^{\alpha-1}$ for $t>c_0$ and $[(1\ast \ell)(t)]^{-\delta}\lesssim t^{-\alpha\delta}$ for $t>c_0$.

It has been established in \cite[Theorem 3, Section 5, Chapter XIII]{Fell-1971} that  Karamata-Feller's Theorem remains valid when the roles of the origin and infinity are interchanged, that is, for $\lambda\to \infty$ and $t\to 0$. Therefore, since $0<\alpha<\beta<1$, and the Laplace transform of the kernel $\ell$ is given by 
\[
\widehat{\ell}(\lambda)=\dfrac{1}{\lambda^\alpha+\lambda^\beta},\quad \lambda>0.
\]
We have that 
\[
\ell(t)\sim t^{\beta-1}, \text{\ and\ \ } [(1\ast\ell)(t)]^{-\delta}\sim t^{-\beta\delta}, \text{\ as \ }t\to 0.
\]
It follows from a direct calculation that 
\[
\int_0^{c_0} s^{\beta-1}s^{-\beta\delta}ds, \text{\ \ is convergent}.
\]
By the limit comparison test, we have that  $\displaystyle\int_0^{c_0} \ell(s)[(1\ast\ell)(s)]^{-\delta}ds$, is convergent as well. The choice of $c_0$ implies that
\[
t^{\gamma-1}\int_{c_0}^{\frac{t}{2}} \ell(s)[(1\ast\ell)(s)]^{-\delta}ds\lesssim t^{\gamma-1} \int_{c_0}^{\frac{t}{2}} s^{\alpha-1}s^{-\alpha\delta}ds\lesssim t^{\gamma -1+ \alpha-\alpha\delta}.
\]

In consequence, we have 
\[
|u(t,\cdot)|_r\lesssim t^{\gamma-1+\alpha-\frac{\alpha d}{\rho}\left(1-\frac{1}{p}\right)}.
\]

\medskip

The proof of $(iii)$ is very similar to the proof of $(ii)$. To obtain \eqref{Ex:decay:3} we write
\[
\int_0^t \ell(s)[(1\ast\ell)(s)]^{-\delta}g_{\gamma}(t-s)ds=I_1(t)+I_2(t),
\]
where 
\[
I_1(t)=\displaystyle\int_0^{t/2} \ell(s)[(1\ast\ell)(s)]^{-\delta}g_{\gamma}(t-s)ds,
\] 
and 
\[
I_2(t)=\displaystyle\int_{t/2}^t \ell(s)[(1\ast\ell)(s)]^{-\delta}g_{\gamma}(t-s)ds.
\]

Since $s\mapsto\ell(s)[(1\ast\ell)(s)]^{-\delta}$ is non-increasing and non-singular in $[t/2,t]$, we have that 
\begin{align*}
I_2(t)&\lesssim \ell\Bigl(\frac{t}{2}\Bigr)\Bigl[(1\ast\ell)\Bigl(\frac{t}{2}\Bigr)\Bigr]^{-\delta}t^{\gamma}, \quad t>0
\end{align*}
As we have proved in Example \ref{Ex:3}, we have that $[(1\ast\ell)(t)]^{-1}\sim t^{\beta-\alpha-1}$ as $t\to \infty$, and $\ell(t)\sim t^{\alpha-\beta}$, as $t\to\infty$. Therefore, we have 
\[
I_2(t)\lesssim t^{(\alpha-\beta)(1-\delta)-\delta+\gamma}.
\]
Regarding to $I_1(t)$, we have that $s\mapsto g_{\gamma}(t-s)$ is increasing and non-singular in $[0,t/2]$. Thus,
\begin{align*}
I_1(t)&\lesssim t^{\gamma-1}\int_0^{c_0} \ell(s)[(1\ast\ell)(s)]^{-\delta}ds+t^{\gamma-1}\int_{c_0}^{\frac{t}{2}} \ell(s)[(1\ast\ell)(s)]^{-\delta}ds,\end{align*}
where $c_0>0$ satisfies that $\ell(t)\lesssim t^{\alpha-\beta}$ for $t>c_0$ and $[(1\ast \ell)(t)]^{-\delta}\lesssim t^{(\beta-\alpha-1)\delta}$ for $t>c_0$.

Since $0<\alpha<\beta<1$, and the Laplace transform of the kernel $\ell$ is given by 
\[
\widehat{\ell}(\lambda)=\dfrac{1}{\lambda^{2+\alpha-\beta}}(\lambda^\alpha+\omega)
\] 
Interchanging the roles of the origin and infinity, it follows from Karamata-Feller's Theorem  that 
\[
\ell(t)\sim t^{-\beta}, \text{\ and\ \ } [(1\ast\ell)(t)]^{-\delta}\sim t^{(\beta-1)\delta}, \text{\ as \ }t\to 0.
\]
Therefore, from a direct calculation we have  
\[
\int_0^{c_0} s^{-\beta}s^{(\beta-1)\delta}ds, \text{\ \ is a convergent improper integral}.
\]
By the limit comparison test of improper integrals, we have $\displaystyle\int_0^{c_0} \ell(s)[(1\ast\ell)(s)]^{-\delta}ds$, is convergent. 

The choice of $c_0$ implies that
\[
t^{\gamma-1}\int_{c_0}^{\frac{t}{2}} \ell(s)[(1\ast\ell)(s)]^{-\delta}ds\lesssim t^{\gamma-1}\int_{c_0}^{\frac{t}{2}} s^{\alpha-\beta}s^{(\beta-\alpha-1)\delta}ds\lesssim t^{\gamma -1+(\alpha-\beta+1)(1-\delta)}.
\]

In consequence, we have 
\[
|u(t,\cdot)|_r\lesssim t^{\gamma-1+\bigl(\beta-\alpha-1\bigr)\bigl(1-\frac{d}{\rho}\bigl(1-\frac{1}{p}\bigr)\bigr)}.
\]

\end{proof}

\begin{corollary}\label{Theo:Ex:decay:2}
Let $q\ge 1$ and  
\[
k(t)=\int_0^1g_{\alpha}(t)\alpha^n d\alpha,\quad t>0.
\]  
If $f(t,\cdot)\in L_1(\mathbb{R}^d)\cap L_q(\mathbb{R}^d)$ for all $t\ge 0$ and $|f(t,\cdot)|_q\lesssim g_{\gamma}(t)$, for some $\gamma\in(0,1)$, then 
\begin{equation}\label{log:est}
|u(t,\cdot)|_q\lesssim t^{\gamma-1}\log(t).
\end{equation}
\end{corollary}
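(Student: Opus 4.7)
The plan is to start from Theorem~\ref{Theo:Lr:Est:f}; applying the estimate \eqref{Est:Lq:Lq:f} (which corresponds to $p=1$, $r=q$, $\delta=0$) together with the hypothesis $|f(t,\cdot)|_q\lesssim g_\gamma(t)$ reduces the problem to the scalar estimate
\[
(\ell \ast g_\gamma)(t)\lesssim t^{\gamma-1}\log t\quad\text{as }t\to\infty,
\]
where $\ell$ is the Sonine complement of $k$ computed in Example~\ref{Ex:4}.

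To estimate this convolution I would split it as $I_1(t)+I_2(t)$ with the cutoff $t/2$. On $I_1(t)=\int_0^{t/2}\ell(s)(t-s)^{\gamma-1}/\Gamma(\gamma)\,ds$ the elementary bound $(t-s)^{\gamma-1}\le (t/2)^{\gamma-1}$ (valid because $\gamma<1$) yields
\[
I_1(t)\le \frac{(t/2)^{\gamma-1}}{\Gamma(\gamma)}(1\ast\ell)(t/2)\lesssim t^{\gamma-1}\log t,
\]
using the asymptotic $(1\ast\ell)(t)\sim \log t$ from Example~\ref{Ex:4}. For $I_2(t)=\int_{t/2}^{t}\ell(s)(t-s)^{\gamma-1}/\Gamma(\gamma)\,ds$, I would use the monotonicity of $\ell$: since $\ell$ is nonincreasing, $\ell(s)\le\ell(t/2)$ on $[t/2,t]$, so
\[
I_2(t)\le \ell(t/2)\,\frac{(t/2)^{\gamma}}{\gamma\,\Gamma(\gamma)}.
\]
A second application of monotonicity on $[t/4,t/2]$ gives the pointwise bound
\[
\frac{t}{4}\,\ell(t/2)\le \int_{t/4}^{t/2}\ell(s)\,ds\le (1\ast\ell)(t/2)\lesssim \log t,
\]
hence $\ell(t/2)\lesssim \log(t)/t$ and $I_2(t)\lesssim t^{\gamma-1}\log t$. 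Summing the two contributions completes the argument.

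The main obstacle is justifying that $\ell$ is (eventually) nonincreasing for this specific kernel, since only the asymptotics of $\widehat{k}$ and $(1\ast\ell)$ are recorded in Example~\ref{Ex:4}. I would address this by observing that $\varphi(\lambda)=\lambda\widehat{k}(\lambda)=\int_0^1\alpha^n\lambda^{1-\alpha}\,d\alpha$ is a complete Bernstein function, being a nonnegative superposition of the complete Bernstein functions $\lambda\mapsto\lambda^{1-\alpha}$, $\alpha\in(0,1)$. It is a classical fact that the reciprocal of a nontrivial complete Bernstein function is a Stieltjes function; therefore $\widehat{\ell}=1/\varphi$ admits a representation of the form $\widehat{\ell}(\lambda)=\int_0^\infty(\lambda+s)^{-1}\sigma(ds)$, which in turn means $\ell(t)=\int_0^\infty e^{-ts}\sigma(ds)$ is completely monotone, and in particular nonincreasing. (One can verify $c=\lim_{\lambda\to 0}\lambda/\varphi(\lambda)=0$ and $d=\lim_{\lambda\to\infty}1/\varphi(\lambda)=0$ from the Laplace asymptotics of Example~\ref{Ex:4}, so no pure point mass at $0$ or affine term appears.)
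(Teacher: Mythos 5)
Your argument is correct, but it takes a genuinely different route from the paper's. The paper performs the same reduction via Theorem \ref{Theo:Lr:Est:f} to the scalar convolution $(\ell\ast g_\gamma)(t)$, but then works entirely on the Laplace transform side: it writes $\widehat{g_\gamma}(\lambda)\widehat{\ell}(\lambda)=\lambda^{-\gamma}L(1/\lambda)$ with $L$ slowly varying and $L(t)\sim\log t$, and invokes the Karamata--Feller Tauberian theorem (Theorem \ref{Karamata}) to conclude $(\ell\ast g_\gamma)(t)\sim \frac{t^{\gamma-1}}{\Gamma(\gamma)}\log t$. You instead estimate the convolution directly in the time domain by splitting at $t/2$, using only the already-established asymptotics $(1\ast\ell)(t)\sim\log t$ together with the monotonicity of $\ell$; this is essentially the same real-variable technique the paper itself deploys in parts $(ii)$--$(iii)$ of Corollary \ref{Theo:Ex:decay}, and it has the advantage of sidestepping the monotonicity hypothesis on $w=\ell\ast g_\gamma$ that Theorem \ref{Karamata} formally requires (and which the paper does not verify). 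The price is that you must justify that $\ell$ is (eventually) nonincreasing, and your justification is sound: $\varphi(\lambda)=\lambda\widehat{k}(\lambda)=\int_0^1\alpha^n\lambda^{1-\alpha}\,d\alpha$ is a complete Bernstein function as a nonnegative superposition of the complete Bernstein functions $\lambda\mapsto\lambda^{1-\alpha}$, so $\widehat{\ell}=1/\varphi$ is a Stieltjes function (see \cite{Sch-Son-Von-2010}, already cited in the paper), and the limits $\lim_{\lambda\to 0}\lambda/\varphi(\lambda)=0$ and $\lim_{\lambda\to\infty}1/\varphi(\lambda)=0$ follow from \eqref{Laplace:k1}, so $\ell$ is in fact completely monotone. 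Both proofs yield the stated bound; yours is more elementary at the final step, the paper's is shorter once the Tauberian theorem is taken for granted.
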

\begin{proof}
By \cite[Proposition 3.1]{Koch-2008} there exists $\ell \in L_{1,loc}(\mathbb{R}^+)$ such that $(k\ast\ell)=1$. It follows from \eqref{Laplace:k1} 
\[
\widehat{\ell}(\lambda)=\dfrac{\log^{n+1}(\lambda)}{n!\left(\displaystyle\lambda-\sum_{m=0}^n \dfrac{\log^{m}(\lambda)}{m!}\right)}, \text{\ for\ }\lambda>0.
\]
Thus, we have that 
\[
\widehat{g_\gamma}(\lambda)\widehat{\ell}(\lambda)=\dfrac{1}{\lambda^{\gamma}}L\left(\dfrac{1}{\lambda}\right),\quad \lambda>0.
\]
where $L(t)=\left((-1)^{n+1}t\log^{n+1}(t)\right)\left(\displaystyle n!\left(1-\sum_{m=0}^n \dfrac{(-1)^m t\log^m(t)}{m!}\right)\right)^{-1}$. The function $L$ is slowly varying at $\infty$ and $L(t)\sim \log(t)$ as $t\to \infty$. Therefore, the decay rate \eqref{log:est} it follows from Theorem \ref{Karamata} and Theorem \ref{Theo:Lr:Est:f}.
\end{proof}

\begin{corollary}\label{Theo:Ex:decay:3} Let $1\le p\le \sigma_2(d,\rho)$ and $1<q,r<\infty$ be such that $1+\frac{1}{p}=\frac{1}{r}+\frac{1}{q}$. Assume further that $|f(t,\cdot)|_q\lesssim g_{\gamma}(t)$ for some $\gamma\in(0,1)$. The following assertions hold.
\begin{enumerate}[$(i)$]
\item If $k=g_{1-\alpha}$, then 
\[
|\nabla u(t,\cdot)|_r\lesssim t^{\gamma -1 +\alpha-\frac{\alpha}{\rho}-\frac{\alpha d}{\rho}\bigl(1-\frac{1}{p}\bigr)},\text{\ as \ }t\to\infty.
\] 
\item If $k=g_{1-\alpha}+g_{1-\beta}$, with $0<\alpha<\beta<1$, then 
\[
|\nabla u(t,\cdot)|_r\lesssim t^{\gamma -1 +\alpha-\frac{\alpha}{\rho}-\frac{\alpha d}{\rho}\bigl(1-\frac{1}{p}\bigr)},\text{\ as \ }t\to\infty.
\] 

\item If $k(t)=g_{\beta}(t)E_{\alpha,\beta}(-\omega t^{\alpha})$, for $t>0$,
with $0<\alpha<\beta<1$ and $\omega>0$, then 
\[
|\nabla u(t,\cdot)|_r\lesssim t^{\gamma-1+\bigl(\beta-\alpha-1\bigr)\bigl(1-\frac{1}{\rho}-\frac{d}{\rho}\bigl(1-\frac{1}{p}\bigr)\bigr)},\text{\ as \ }t\to\infty .
\]
\end{enumerate}
\end{corollary}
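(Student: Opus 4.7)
The plan is to invoke Theorem~\ref{Theo:Lr:Grad:Est:f}(ii), which under the hypothesis $|f(t,\cdot)|_q \lesssim g_\gamma(t)\sim t^{\gamma-1}$ reduces the matter to estimating the scalar convolution
\[
J(t):=\int_0^t \ell(s)\,[(1\ast\ell)(s)]^{-\eta}\,(t-s)^{\gamma-1}\,ds,
\]
where $\eta:=\tfrac{1}{\rho}+\delta$ with $\delta:=\tfrac{d}{\rho}(1-\tfrac{1}{p})$. The assumption $1\le p\le\sigma_2(d,\rho)$ translates exactly to $\eta\in[0,1]$, which is the range of validity of Lemma~\ref{Lem:aux6} and also controls the local integrability at $s=0$ used below.

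The key observation is that $J(t)$ has precisely the same structural form as the convolution integral analyzed in the proof of Corollary~\ref{Theo:Ex:decay}, with the exponent $\delta$ there replaced by $\eta$ here. Thus the proposal is to transcribe the argument of Corollary~\ref{Theo:Ex:decay} step by step, substituting $\eta$ for $\delta$ throughout and reading off the asymptotic behaviors of $\ell$ and $1\ast\ell$ from Examples~\ref{Ex:1}, \ref{Ex:2}, and \ref{Ex:3}.

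Concretely, in case $(i)$ the identities $\ell=g_\alpha$ and $1\ast\ell=g_{\alpha+1}$ are exact, so $J(t)$ is a Beta-type integral that computes explicitly to $B(\alpha(1-\eta),\gamma)\,t^{\gamma-1+\alpha(1-\eta)}$; expanding $\alpha(1-\eta)=\alpha-\tfrac{\alpha}{\rho}-\tfrac{\alpha d}{\rho}(1-\tfrac{1}{p})$ reproduces the stated exponent. For cases $(ii)$ and $(iii)$ I would split $J(t)=I_1(t)+I_2(t)$ at $s=t/2$ exactly as in Corollary~\ref{Theo:Ex:decay}. On $[t/2,t]$, monotonicity of the map $s\mapsto\ell(s)[(1\ast\ell)(s)]^{-\eta}$ allows its value at $s=t/2$ to be pulled outside the integral; then the large-time asymptotics $\ell(t)\sim t^{\alpha-1}$, $(1\ast\ell)(t)\sim t^\alpha$ from Example~\ref{Ex:2} (respectively $\ell(t)\sim t^{\alpha-\beta}$, $(1\ast\ell)(t)\sim t^{1+\alpha-\beta}$ from Example~\ref{Ex:3}), combined with the elementary bound $\int_{t/2}^t(t-s)^{\gamma-1}\,ds\lesssim t^\gamma$, produce the claimed powers. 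On $[0,t/2]$ one bounds $g_\gamma(t-s)\lesssim t^{\gamma-1}$ and splits the remaining $s$-integral at a fixed threshold $c_0$ chosen so that the large-time asymptotics are valid on $[c_0,t/2]$ and the small-time asymptotics on $[0,c_0]$.

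The main subtlety, exactly as in Corollary~\ref{Theo:Ex:decay}, is verifying that $\int_0^{c_0}\ell(s)[(1\ast\ell)(s)]^{-\eta}\,ds$ converges. Using the small-time asymptotics obtained by applying the Karamata--Feller theorem as $\lambda\to\infty$, the integrand behaves like $s^{\beta(1-\eta)-1}$ in case $(ii)$ and like $s^{-\beta-(1-\beta)\eta}$ in case $(iii)$; both are integrable at $0$ precisely when $\eta<1$, which is the strict form of the constraint $p\le\sigma_2(d,\rho)$. Modulo this bookkeeping and the routine collection of exponents, the argument is a direct adaptation of the proof of Corollary~\ref{Theo:Ex:decay}, the extra factor $[(1\ast\ell)(s)]^{-1/\rho}$ coming from Theorem~\ref{Theo:Lr:Grad:Est:f}(ii) being responsible for the additional $-\tfrac{1}{\rho}$ contribution visible in every one of the three claimed exponents.
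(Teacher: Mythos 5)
Your argument is exactly the paper's: the published proof of this corollary consists of invoking Theorem \ref{Theo:Lr:Grad:Est:f} and then repeating the convolution estimates from the proof of Corollary \ref{Theo:Ex:decay} with the exponent $\delta$ replaced by $\frac{1}{\rho}+\delta$, which is precisely your $\eta$, using the large- and small-time asymptotics of $\ell$ and $1\ast\ell$ from Examples \ref{Ex:1}--\ref{Ex:3}. Your write-up is a faithful (indeed slightly more careful) unpacking of that one-line proof — e.g.\ the explicit Beta-integral in case $(i)$ and the observation that integrability at $s=0$ forces the strict inequality $\eta<1$, i.e.\ $p<\sigma_2(\rho,d)$.
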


\begin{proof}
The proof follows from a slight modification of the proof of Theorem \ref{Theo:Ex:decay} and the application of Theorem \ref{Theo:Lr:Grad:Est:f}.
\end{proof}

We conclude pointing out that the method developed in this work can be applied for more situations. For instance, by switching the kernels $k$ and $\ell$ we obtain more interesting examples of pair $(k,\ell)\in(\mathcal{PC})$.

\medskip

We expect to generalize our methods to consider another evolutionary equations with nonlocal time diffusion. More specifically, we want to find optimal decay rates for solution to nonlocal differential equations with the the fractional $p$-Laplacian operator or the sum of different space-fractional operators. As well as, we want to consider the porous medium equation, the case of the doubly nonlinear equation, the case of the mean curvature equation. Recently, this has been successfully made for fractional in time evolutionary equations in \cite{Dip-Val-Ves-2017}.

\bibliographystyle{aims}
\bibliography{Ref}

\end{document}